\documentclass[11pt]{article}

\usepackage[T1]{fontenc}
\usepackage[utf8]{inputenc}

\usepackage{amsmath,amssymb,amsfonts,amsthm}
\usepackage{graphicx}

\usepackage{tikz}
\usepackage{pgfplots}
\pgfplotsset{compat=1.18}

\usepackage{booktabs}
\usepackage{enumitem}
\usepackage{natbib}

\usepackage{hyperref}
\hypersetup{colorlinks=true, linkcolor=blue, citecolor=blue, urlcolor=blue} 
\usepackage[nameinlink]{cleveref}

\usepackage[margin=1in]{geometry}

\usepackage{setspace}
\theoremstyle{plain}
\newtheorem{theorem}{Theorem}[section]
\newtheorem{lemma}[theorem]{Lemma}

\newtheorem{proposition}[theorem]{Proposition}

\theoremstyle{definition}
\newtheorem{definition}[theorem]{Definition}
\newtheorem{assumption}[theorem]{Assumption}
\newtheorem{example}[theorem]{Example}

\theoremstyle{remark}
\newtheorem{remark}[theorem]{Remark}

\newcommand{\RR}{\mathbb{R}}
\newcommand{\EE}{\mathbb{E}}
\newcommand{\PP}{\mathbb{P}}
\newcommand{\Var}{\mathrm{Var}}

\DeclareMathOperator{\re}{Re}
\DeclareMathOperator{\tr}{tr}

\title{Curvature-Dependent Path Concentration in Stochastic Fast-Slow Systems with Noise on the Slow Variable}

\author{
  Yefan Wu\thanks{Email: \texttt{wuyefan718@gmail.com}, \texttt{yewu0336@uni.sydney.edu.au}}\\[0.2cm]
  School of Mathematics and Statistics, University of Sydney
}

\date{}

\begin{document}

\maketitle

\begin{abstract}
We study stochastic fast-slow systems in which the noise acts
exclusively on the slow variable:
$dx = f(x,y)\,dt$,
$dy = \varepsilon\,g(x,y)\,dt + \sigma\,h(y)\,dW_t$.
While the path-concentration theory for noise on the fast variable
is well developed, the complementary case of noise only on the slow variable has remained largely unexplored, with a recent exception treating the fold bifurcation. For general, uniformly normally-hyperbolic deterministic slow manifolds $x = X^*(y)$,
we derive rigorous pathwise estimates showing that the deviation
$z = x - X^*(y)$ concentrates with exponential tail bounds
over the slow timescale $[0,T/\varepsilon]$. A central finding is
that the It\^o correction arising from the curvature $D^2X^*$ of
the slow manifold introduces a systematic $O(\sigma^2\|D^2X^*\|)$
bias that tightens the concentration bound beyond the classical
$\sigma/\sqrt{\lambda_0}$ tube width. We identify a
geometric critical noise scale
$\sigma_c(\varepsilon) = C_0\min\!\bigl(\sqrt{\varepsilon},\,
\varepsilon^{1/4} L_{\mathrm{geom}}/\sqrt{\lambda_0}\bigr)$,
where $L_{\mathrm{geom}} = \sqrt{\lambda_0/(\|D^2X^*\|\|h\|^2)}$
is a local geometric scale of the manifold. For $\sigma \le \sigma_c$,
the fast variable tracks the manifold to within $C\bigl(\varepsilon/\lambda_0
+ \sigma^2\|D^2X^*\|\|h\|^2/(2\lambda_0) + \sigma/\sqrt{\lambda_0}\bigr)$
with probability at least $1 - e^{-\kappa/\varepsilon} - e^{-C_K}$,
where $C_K > 0$ depends on the confinement of the slow dynamics.
We also prove that the slow-variable adiabatic error
is $O(\varepsilon + \sigma\sqrt{\varepsilon} + \sigma^2\|D^2X^*\|)$,
which is dominated by classical terms when $\sigma \le \sigma_c$;
hence curvature governs fast-variable path concentration but
not adiabatic validity.
\end{abstract}

\noindent\textbf{Keywords:} stochastic fast-slow systems, path concentration, stochastic singular perturbation, geometric singular perturbation theory, It\^o curvature correction, martingale exponential inequality.\\
\textbf{AMS subject classifications:} 34E15, 60H10, 60G44, 37H10, 60J60

\section{Introduction}

\subsection{Stochastic fast-slow systems: state of the art}
Fast-slow systems with time-scale separation $\varepsilon \ll 1$
form a central paradigm in applied dynamical systems
\citep{Kuehn2015}. In the deterministic setting, Geometric
Singular Perturbation Theory (GSPT), initiated by \citet{Fenichel1979} and systematised by \citet{Jones1995} and \citet{Kaper1999}, establishes that a normally-hyperbolic slow manifold $M_0 = \{(X^*(y),y)\}$ persists for $\varepsilon > 0$ as a nearby invariant manifold $M_\varepsilon$ that attracts trajectories at rate $e^{-\lambda_0/\varepsilon}$.

When the system is stochastically perturbed, the pathwise
behaviour depends crucially on which variables are noisy.
The most extensively studied setting is noise on the
fast variable (or on both variables), where \citet{BerglundGentz2003, BerglundGentz2006} developed a sample-paths theory showing that trajectories concentrate in a tube of width $\sigma/\sqrt{\lambda_0}$ around $M_0$, with exponential tail bounds valid up to Kramers' escape times. Near bifurcation points, they uncovered critical noise scales
$\sigma_c \sim \varepsilon^{1/4}$ at which the tube
diverges and early-jump mechanisms dominate
\citep{BerglundGentzKuehn2012}.

Parallel to the sample-paths approach, random invariant 
manifold theory \citep{Arnold1998, DuanLuSchmalfuss2003, SchmalfussSchneider2008} 
constructs a random slow manifold as a random graph over the 
slow-variable space, and normal-form methods 
\citep{Roberts2008, WangRoberts2013} yield systematic reduced models. 
Stochastic averaging principles \citep{Khasminskii1968, Veretennikov1999, Veretennikov2000, VandenEijnden2003, PavliotisStuart2008} provide 
complementary convergence rates and identify regimes where 
adiabatic reduction breaks down. Computational and 
model-reduction frameworks for such systems are reviewed 
by \citet{GivonKupfermanStuart2004}.

A crucial structural feature shared by all of the above
frameworks is that the noise acts on (or is coupled to)
the fast variable. In the classical averaging of
\citet{Khasminskii1968}, noise enters the fast equation, and the
averaging rates depend on~$\varepsilon$ alone; no geometric
property of the slow manifold enters. In the sample-paths theory
of \citet{BerglundGentz2003,BerglundGentz2006},
noise directly perturbs the fast variable, producing a tube of
width $\sigma/\sqrt{|\lambda|}$ around the manifold whose bound
depends only on the spectral gap, not on the manifold curvature.
Similarly, the normal-form reductions of \citet{Roberts2008}
and the averaging schemes of \citet{GivonKupfermanStuart2004} treat
noise configurations in which the It\^o curvature correction
$-\frac{\sigma^2}{2}D^2X^*[h\otimes h]$ does not arise.
The curvature mechanism identified in the present work is a
genuine consequence of the noise acting exclusively on the
slow variable.

\subsection{The gap: noise on the slow variable}
In all of the above work, the noise acts on the fast
variable, on both variables, or on the system in a
symmetric fashion. The practically and mathematically
natural case in which noise acts only on the slow
variable, corresponding to a deterministic fast
subsystem driven by a randomly perturbed slow parameter, has remained almost entirely unexplored. The sole recent exception is \citet{BergeotBerglundZogheib2025}, 
who study the effect of noise on the slow variable near a 
fold/saddle-node bifurcation of the slow manifold, showing that 
noise induces an $O(\sigma^2)$ systematic drift in the slow 
variable after the fold, expressible in terms of Airy 
functions. Their analysis is, however, restricted to the
vicinity of the bifurcation point and does not address
general normally-hyperbolic slow manifolds.

The difficulty is structural. When noise acts on the slow
variable $y$, it enters the fast-fibre deviation $z = x - X^*(y)$
through two distinct mechanisms: (i) the Jacobian
$DX^*(y)$ propagates slow-variable noise into the fast
fibre as a diffusion term, and (ii) applying It\^o's formula to $X^*(y)$ yields a quadratic 
variation term that manifests in the $z$-equation as a systematic 
curvature-induced drift $-\tfrac{\sigma^2}{2} D^2X^*(y)[h(y)\otimes h(y)]\,dt$, 
which is absent when noise acts on the fast variable. Mechanism (ii) 
is novel and depends on the second derivative of the slow 
manifold, that is, on its extrinsic curvature as an embedded 
submanifold of the phase space. (In coordinates: 
$(D^2X^*[h\otimes h])_i = \sum_{j,k=1}^m \sum_{\ell=1}^d 
\frac{\partial^2 X^*_i}{\partial y_j \partial y_k}\,h_{j\ell}h_{k\ell}$.)

\subsection{Contributions}
The present paper provides a rigorous path-concentration
theory for stochastic fast-slow systems with noise only on
the slow variable, valid for general uniformly
normally-hyperbolic deterministic slow manifolds.
Our contributions are:

\begin{enumerate}[label=(\roman*),leftmargin=*]
\item We derive the exact SDE for the fast-fibre deviation
      $z$ and identify the It\^o curvature term
      $b_\sigma = -\frac{\sigma^2}{2}D^2X^*[h\otimes h]$
      as the central new mechanism.

\item We construct a parameter-dependent Lyapunov function
      $V = z^T P(y)z$ and prove a Foster--Lyapunov bound
      whose forcing constant $D_0$ depends explicitly on
      $\|D^2X^*\|$, establishing a geometric dependence in
      the concentration bound.

\item Using exponential supermartingale estimates, we prove
      a pathwise tail bound on the joint event of path
      concentration and slow-variable confinement,
      $\PP(\sup_{s \le T/\varepsilon}|z(s)| \ge r
      \;\text{or}\; \tau_K \le T/\varepsilon) \le
      e^{-\kappa/\varepsilon} + e^{-C_K}$, valid under the curvature-dependent
      critical noise scale
      \[
        \sigma_c(\varepsilon)
        = C_0\min\!\bigl(\sqrt{\varepsilon},\,
          \varepsilon^{1/4} L_{\mathrm{geom}}/\sqrt{\lambda_0}\bigr),
      \]
      where $L_{\mathrm{geom}} = \sqrt{\lambda_0/(\|D^2X^*\|\|h\|^2)}$.

\item We prove that the adiabatic slow-variable error is
      $O(\varepsilon + \sigma\sqrt{\varepsilon} +
      \sigma^2\|D^2X^*\|)$, and show that the curvature
      contribution is subdominant under $\sigma \le \sigma_c$.
\end{enumerate}

The proof technique shares the Foster--Lyapunov and
exponential-supermartingale architecture of the classical
Berglund--Gentz framework, but two structural novelties distinguish
the present analysis. First, noise arrives at the fast
variable indirectly. In the classical setting, noise acts directly
on $x$, producing a dominant $O(\sigma^2)$ diffusive term in the
$z$-equation. Here, noise enters $z$ only through the Jacobian
coupling $DX^*$ (generating $-\sigma DX^* h\,dW_t$, of order
$\sigma$ rather than the classical $O(1)$) and through the It\^o
curvature correction $-\tfrac12\sigma^2 D^2X^*[h\otimes h]\,dt$
(which is a drift term, not a diffusion term). This indirect
transmission produces cross-It\^o terms in the generator of
$V = z^T P(y)z$ (Term~VII in Lemma~\ref{lem:ito_V}) that have no
classical analogue. Second, the Lyapunov matrix must vary with the slow parameter. Because the linearised fast drift $A(y):=D_xf(X^*(y),y)$ varies 
along the trajectory, $P(y)$ must solve the parameter-dependent 
Lyapunov equation $A(y)^T P(y) + P(y)A(y) = -I$, introducing $DP$ 
and $D^2P$ terms into the It\^o expansion of $V$ that are absent 
when $A$ is constant. Bounding $\|DP\|$ and $\|D^2P\|$ via 
differentiated Sylvester equations (Appendix~\ref{sec:appendix}) 
is essential; the constants acquire higher powers of $K_A$ (notably 
$K_A^4$ in $M_{DP}$), which makes the effective decay rate 
$\tilde\lambda$ more sensitive to both $\varepsilon$ and $\sigma$ 
and dictates the two-branch structure of $\sigma_c$. In the 
illustrative Example~5.1 below, $A(y)$ is constant, so that $DP$ 
and $D^2P$ vanish and the cross-It\^o corrections are absent. The 
full proof, however, tracks these terms explicitly under the general 
assumptions of Section~2, and a systematic treatment of parameter 
families in which both $\|D^2X^*\|$ and $\|DA\|$ grow simultaneously 
is a natural direction for future work.

We emphasise that curvature governs fast-variable path
concentration, not adiabatic validity. For systems with
$\|D^2X^*\| = O(1)$, the classical threshold
$\sigma \ll \sqrt{\varepsilon}$ dominates and curvature
does not tighten it. The geometric scale becomes binding
precisely when the manifold is sharply curved ($\|D^2X^*\| \gg 1$), which occurs near (but not at)
loss-of-normal-hyperbolicity points.
This curvature-induced refinement identifies the graph
curvature of the slow manifold, quantified by $\|D^2X^*\|$,
as an independent control parameter in the concentration
theory of stochastic fast-slow systems with noise acting on
the slow variables. Even when the curvature branch is not
active, the explicit dependence of the concentration bound
on $\|D^2X^*\|$ provides a systematic geometric correction
to classical estimates based solely on the spectral gap.
This makes our result complementary to the Berglund--Gentz
near-bifurcation theory: their analysis captures
$\lambda_0 \to 0$; ours captures $\|D^2X^*\| \to \infty$
with $\lambda_0 > 0$ fixed.

\subsection{Paper outline}
Section~\ref{sec:setup} states the system, the assumptions and the
Lyapunov construction. Section~\ref{sec:main} presents the main
theorem. Section~\ref{sec:proofs} gives the complete proofs.
Section~\ref{sec:discussion} discusses the phase diagram,
limitations and physical motivations.
Appendix~\ref{sec:appendix} contains technical details on
the parameter-dependent Lyapunov matrix.

\section{Setup and Assumptions}
\label{sec:setup}

\subsection{The system}
Consider the $n$-dimensional fast variable $x \in \RR^n$
and $m$-dimensional slow variable $y \in \RR^m$ governed by
\begin{subequations}\label{eq:system}
\begin{align}
dx &= f(x,y)\,dt, \label{eq:x}\\
dy &= \varepsilon\,g(x,y)\,dt + \sigma\,h(y)\,dW_t, \label{eq:y}
\end{align}
\end{subequations}
where $W_t$ is a standard $d$-dimensional Brownian motion,
$\varepsilon \ll 1$ is the time-scale separation parameter,
and $\sigma > 0$ is the noise intensity.
We assume that the fast variable is deterministic (no direct
noise on $x$) and that the noise coefficient $h$ is
independent of $x$; we discuss the relaxation of this
assumption in Section~\ref{sec:discussion}.

\subsection{Deterministic slow manifold}
Assume there exists a function $X^* \in C^3(K,\RR^n)$,
defined on a compact set $K \subset \RR^m$, such that
\begin{equation}\label{eq:manifold}
f(X^*(y),y) = 0 \quad \text{for all } y \in K.
\end{equation}
The graph $M_0 = \{(X^*(y),y) : y \in K\}$ is the
deterministic slow manifold.
For the purpose of the adiabatic approximation below, we extend
$X^*(\cdot)$ smoothly to all of $\RR^m$; the specific form of the
extension is irrelevant since our estimates only involve $y_{\mathrm{ad}}(t)$
on the high-probability event where it remains near $K$.

Define the fast stability matrix, its uniform spectral bound
and the Jacobian and curvature of the slow manifold:
\begin{align}
A(y) &= D_x f(X^*(y),y) \in \RR^{n \times n}, \label{eq:A}\\
\lambda_0 &= \inf_{y \in K}\min_{i=1,\dots,n}\bigl(-\re\lambda_i(A(y))\bigr) > 0, \label{eq:lambda}\\
M_C &= \sup_{y \in K}\|DX^*(y)\|, \label{eq:MC}\\
M_{C_2} &= \sup_{y \in K}\|D^2X^*(y)\|. \label{eq:MC2}
\end{align}

\begin{assumption}[Uniform normal hyperbolicity]\label{ass:NH}
There exists a constant $K_A \ge 1$ such that
\[
\|e^{A(y)t}\| \le K_A e^{-\lambda_0 t}
\quad \text{for all } t \ge 0,\; y \in K.
\]
\end{assumption}

\subsection{Constants and regularity}
\begin{definition}[System constants]\label{def:constants}
Let $U$ be a neighbourhood of $M_0$ over $K$. We define:
\begin{align*}
&M_A = \sup_K \|A(y)\|, \quad
M_g = \sup_U \|g\|, \quad
M_{D_xg} = \sup_U \|D_x g\|,\\
&M_h = \sup_K \|h(y)\|, \quad
M_B = \sup_K \|D_x^2 f(X^*,y)\|, \quad
M_{DF} = \sup_K \|DF(y)\|,
\end{align*}
where $F(y) = g(X^*(y),y)$ and $DF(y) = D_xg \cdot DX^* + D_yg$.
All these constants are assumed to be finite.
\end{definition}

\paragraph{Norm conventions.}
For vectors $v \in \RR^n$, $|v|$ denotes the Euclidean norm.
For matrices $A \in \RR^{n \times n}$, $\|A\|$ denotes the spectral
norm (induced operator norm). For the second derivative
$D^2X^*(y) \in \RR^{n \times m \times m}$, viewed as a third-order
tensor, we use the induced norm
$\|D^2X^*(y)\| = \sup_{|u|=1,\,|v|=1}|D^2X^*(y)[u,v]|$, where
$D^2X^*(y)[u,v] \in \RR^n$ is the vector obtained by contracting
the tensor with $u, v \in \RR^m$. This is the standard operator norm
for bilinear maps induced by the Euclidean norm. In particular,
$|D^2X^*(y)[u,v]| \le \|D^2X^*(y)\|\,|u|\,|v|$ for all
$u,v\in\RR^m$. With this convention,
\[
|D^2X^*(y)[h(y)\otimes h(y)]|
\;\le\; \|D^2X^*(y)\|\,|h(y)|^2.
\]
For the noise coefficient matrix $h(y) \in \RR^{m \times d}$, we
interpret $|h(y)|$ as the Frobenius norm
$\|h(y)\|_F = \sqrt{\tr(h(y)^T h(y))}$, so that
$|h(y)|^2 = \sum_{j=1}^m\sum_{\ell=1}^d h_{j\ell}(y)^2$ and the
inequality above holds with the induced tensor norm on $D^2X^*$.

\begin{assumption}[Smoothness]\label{ass:smooth}
$f \in C^4$, $g \in C^2$, $h \in C^2$, and $X^* \in C^3(K,\RR^n)$.
The assumption $f \in C^4$ is required for the boundedness of
$D^2P(y)$ in Appendix~\ref{sec:appendix}.
\end{assumption}

\begin{assumption}[Lipschitz and non-degeneracy]\label{ass:globs}
$g$ and $h$ are globally Lipschitz with constants $L_g, L_h$.
There exists $h_{\min} > 0$ such that $\|h(y)\| \ge h_{\min}$
for all $y \in K$.
\end{assumption}

\begin{assumption}[Confinement of slow dynamics]\label{ass:confinement}
There exists a $C^2$ function $\psi\colon K \to [0,\infty)$
satisfying:
\begin{enumerate}[label=(\roman*)]
\item $K = \{\psi \le R_K\}$ is an exact sublevel set for some $R_K > 0$;
\item there exist a compact ``safe core''
      $K_0 = \{\psi \le R_\star\} \subset \mathrm{int}(K)$ with
      $0 \le R_\star < R_K$, containing $y_0$, and a constant
      $c_\psi > 0$ such that for all
      $y \in K \setminus \mathrm{int}(K_0)$ and all $x$
      with $|x - X^*(y)| \le R_0$ (where $R_0$ is from
      Assumption~\ref{ass:stop}),
      $\nabla\psi(y) \cdot g(x,y) \le -c_\psi$;
\item $M_\psi = \sup_{K}\|\nabla\psi\|$ and
      $M_{D^2\psi} = \sup_{K}\|D^2\psi\|$ are finite,
      and $M_\psi > 0$.
\end{enumerate}
\end{assumption}

\begin{remark}[Physical meaning of confinement]
Condition~(ii) requires the deterministic slow flow to point
inward across every level set of $\psi$ in the annular region
$K \setminus \mathrm{int}(K_0)$, whose $\psi$-width
is $\delta_\psi := R_K - R_\star > 0$. This is satisfied, for example,
when the slow flow has a globally attracting equilibrium $y^*$
inside $K_0$ and $K$ is a sublevel set
of a Lyapunov function for the reduced dynamics
$\dot{y} = g(X^*(y),y)$.
\end{remark}

\subsection{Parameter-dependent Lyapunov matrix}
For each $y \in K$, let $P(y) \in \mathrm{Sym}^+_n$ be the
unique solution of the Lyapunov equation
\begin{equation}\label{eq:lyap}
A(y)^T P(y) + P(y)\,A(y) = -I_n.
\end{equation}
The explicit representation $P(y) = \int_0^\infty e^{A(y)^T s}
e^{A(y)s}\,ds$ yields the uniform bounds
\begin{equation}\label{eq:Pbounds}
c_1 I \le P(y) \le c_2 I, \qquad
c_1 = \frac{1}{2\lambda_0 K_A^2}, \quad
c_2 = \frac{K_A^2}{2\lambda_0}.
\end{equation}
We define the bounds $M_{DP} = \sup_K \|DP(y)\|$ and
$M_{D^2P} = \sup_K \|D^2P(y)\|$; explicit expressions are
derived in Appendix~\ref{sec:appendix}.

\begin{assumption}[Stopping radius]\label{ass:stop}
There exists $R_0 > 0$ and constants $C_f, C_g > 0$ such that
on $\{|z| \le R_0\} \cap \{y \in K\}$, the Taylor remainders satisfy
$\|R_f(z,y)\| \le C_f |z|^3$ and $\|R_g(z,y)\| \le C_g|z|^2$, where
$R_f, R_g$ are defined in Lemma~\ref{lem:expansion}.
\end{assumption}

\section{Main Results}
\label{sec:main}

We define the local geometric scale of the slow manifold:
\begin{equation}\label{eq:Lgeom}
L_{\mathrm{geom}} \;=\; \sqrt{\frac{\lambda_0}{M_{C_2}\,M_h^2}}.
\end{equation}
This quantity compares the normal attraction rate $\lambda_0$ to
the product of the manifold's curvature $\|D^2X^*\|$ and the noise
coupling $\|h\|^2$. It is large for flat, strongly attracting
manifolds with weak noise, and small for sharply curved, weakly
attracting manifolds with strong noise.

Define the geometric critical noise scale
\begin{equation}\label{eq:sigmac}
\sigma_c(\varepsilon) \;=\; C_0\,\min\!\left(\sqrt{\varepsilon},\;\;
\varepsilon^{1/4}\,\frac{L_{\mathrm{geom}}}{\sqrt{\lambda_0}}\right),
\end{equation}
where $C_0 > 0$ is an $O(1)$ constant (as $\varepsilon \to 0$)
depending only on $\lambda_0$, $K_A$, $c_1$, $c_2$, $M_C$, $M_h$,
$M_{DP}$, $R$, and $T$.
Its explicit dependence is tracked throughout
Section~\ref{sec:proofs}; it arises from the requirement that
$D_0/\varepsilon$ remains bounded under the two branches of $\sigma_c$.

\begin{remark}[Asymptotic framework]\label{rem:asy}
All constants appearing in $\sigma_c(\varepsilon)$ and in the
probability bounds of Theorem~\ref{thm:main}, namely
$\lambda_0$, $K_A$, $c_1$, $c_2$, $M_C$, $M_{C_2}$, $M_{DP}$,
$M_{D^2P}$, $M_h$, are fixed geometric parameters of the given
system and do not depend on $\varepsilon$ or $\sigma$.
In the single-parameter limit $\varepsilon \to 0$ with these
constants fixed, $C_0$, $\kappa$, and $C$ are $O(1)$, and the
classical branch $\sigma_c \sim \sqrt{\varepsilon}$ typically
dominates (since $M_{C_2}\,M_h^2 = O(1)$ implies the curvature
branch is inactive).

The curvature branch becomes binding precisely when one considers
a family of slow manifolds parametrised by an additional
geometric parameter $\delta \to 0$ for which $M_{C_2} = M_{C_2}(\delta)
\to \infty$. In such a two-parameter regime $(\varepsilon, \delta) \to (0,0)$, 
the constants $M_C$ and $M_{C_2}$ scale with $\delta$, which in turn dictates the $\delta$-scaling of the prefactor $C_0 = C_0(\delta)$ and the geometric length scale $L_{\mathrm{geom}} = L_{\mathrm{geom}}(\delta)$. 
This joint limit is analysed explicitly in Example~\ref{ex:sigmoid}.
\end{remark}

\begin{theorem}[Curvature-dependent path concentration and adiabatic error]
\label{thm:main}
Under Assumptions~\ref{ass:NH}--\ref{ass:stop}, consider the system
\eqref{eq:system} with initial conditions satisfying
$|x_0 - X^*(y_0)| = O(\varepsilon)$ and $y_0 \in K_0
\subset \mathrm{int}(K)$ (where $K_0$ is the safe core from
Assumption~\ref{ass:confinement}).
This initial separation is natural when $x_0$ lies on the
Fenichel slow manifold $M_\varepsilon$, which is $O(\varepsilon)$-close
to $M_0$, or when the system is prepared with the fast variable
equilibrated to the slow variable.
Define the exit time of the slow variable from $K$:
\[
\tau_K = \inf\{t > 0 : y(t) \notin K\}.
\]
For all $\sigma \le \sigma_c(\varepsilon)$ and $\varepsilon$
sufficiently small, fix $T > 0$. Then:

\medskip\noindent\textbf{(a) Fast-variable path concentration.}
There exist constants $C, \kappa, C_K > 0$ depending only
on system parameters, with $C_K > 0$ independent of $\varepsilon$,
such that
\begin{equation}\label{eq:path_conc}
\PP\!\Bigl(\sup_{0 \le t \le T/\varepsilon}
|x(t) - X^*(y(t))| \le r_*(\varepsilon,\sigma)
\;\text{ and }\; \tau_K > T/\varepsilon\Bigr)
\;\ge\; 1 - e^{-\kappa/\varepsilon} - e^{-C_K},
\end{equation}
where
$r_*(\varepsilon,\sigma) = C\bigl(\varepsilon/\lambda_0
+ \sigma^2 M_{C_2}M_h^2/(2\lambda_0)
+ \sigma M_C M_h/\sqrt{\lambda_0}\bigr)$.

\medskip\noindent\textbf{(b) Adiabatic slow-variable error.}
Let $y_{\mathrm{ad}}$ be the solution of the adiabatic
approximation
\[
dy_{\mathrm{ad}} = \varepsilon\,g(X^*(y_{\mathrm{ad}}),y_{\mathrm{ad}})\,dt
+ \sigma\,h(y_{\mathrm{ad}})\,dW_t, \quad y_{\mathrm{ad}}(0) = y_0.
\]
There exists $C' > 0$ such that, defining
$\delta_\ast := C'(\varepsilon + \sigma\sqrt{\varepsilon} + \sigma^2 M_{C_2})$,
\begin{equation}\label{eq:adiab_err}
\PP\!\Bigl(\sup_{0 \le t \le T/\varepsilon}
|y(t) - y_{\mathrm{ad}}(t)|
\le \delta_\ast
\;\text{ and }\; \tau_K > T/\varepsilon\Bigr)
\ge 1 - e^{-\kappa/\varepsilon} - e^{-C_K} - p_{\mathrm{fast}} - p_{\mathrm{mart}},
\end{equation}
where $p_{\mathrm{fast}} = 2m e^{-\kappa_{\mathrm{fast}}}$ with
$\kappa_{\mathrm{fast}} = C_{\mathrm{fast}}^2\lambda_0^3/(8T M_{D_xg}^2 M_C^2 M_h^2 K_A^2)$,
which can be made arbitrarily large by choosing $C_{\mathrm{fast}}$ large, and
$p_{\mathrm{mart}} = p_{\mathrm{mart}}(C_{\mathrm{mart}}) \to 0$
as $C_{\mathrm{mart}} \to \infty$.
\end{theorem}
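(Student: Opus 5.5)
We plan to follow the Foster--Lyapunov / exponential-supermartingale route announced in the introduction. Work with $z = x - X^*(y)$ and apply It\^o's formula to $X^*(y)$. This gives
\[
dz = \bigl[A(y)z + B(y)[z,z] + R_f - \varepsilon DX^*(y)g(x,y) + b_\sigma(y)\bigr]dt - \sigma DX^*(y)h(y)\,dW_t,
\]
with $b_\sigma = -\tfrac{\sigma^2}{2}D^2X^*[h\otimes h]$. The expansion comes from Lemma~\ref{lem:expansion}, and the remainders are controlled by Assumption~\ref{ass:stop}.

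Take $V = z^TP(y)z$ with $P$ from \eqref{eq:lyap}, and introduce the stopping time $\tau = \tau_K\wedge\inf\{t:|z(t)|\ge R_0\}$. Up to time $\tau$, the generator computation of Lemma~\ref{lem:ito_V} should give a Foster--Lyapunov inequality
\[
\mathcal{L}V \le -\tilde\lambda V + D_0 .
\]
The terms are handled as follows:
\begin{itemize}
\item The main term is $z^T(A^TP+PA)z=-|z|^2$.
\item Cubic and higher terms are absorbed once $|z|$ is small.
\item The $\varepsilon$-drift contributes through Young's inequality.
\item The curvature drift contributes $2|z|\,c_2\sigma^2M_{C_2}M_h^2/2$, also handled by Young.
\item The quadratic variation $\sigma^2\tr(\cdots DX^*h h^TDX^{*T})$ contributes.
\item The $DP$ and $D^2P$ cross-It\^o terms (Term VII) are bounded using $M_{DP},M_{D^2P}$ from the Appendix.
\end{itemize}
We aim for
\[
\tilde\lambda \ge \lambda_0/(2c_2)-O(\varepsilon+\sigma^2),\qquad
D_0 \lesssim c_2^2\bigl(\varepsilon^2M_C^2M_g^2+\sigma^4M_{C_2}^2M_h^4\bigr)/\lambda_0+\sigma^2c_2M_C^2M_h^2 .
\]
Once $\sigma$ is small relative to $\lambda_0$, $\tilde\lambda$ stays bounded below.

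\textbf{Pathwise tail bound for (a).} Consider $e^{\gamma e^{\tilde\lambda t}V}$, or more simply $\exp(\theta(V_t-\int_0^t(-\tilde\lambda V+D_0)ds))$ corrected by the quadratic variation of the martingale part of $V$. The martingale part has diffusion coefficient $2\sigma z^TPDX^*h+\sigma z^T(DP\cdot h)z$, of size $\lesssim\sigma|z|$, so its bracket is comparable to $\sigma^2V$. This makes the process an exponential supermartingale for $\theta\lesssim\lambda_0/(\sigma^2c_2M_C^2M_h^2)$. Partition $[0,T/\varepsilon]$ into $O(1/\varepsilon)$ intervals of length $O(1/\tilde\lambda)$ and apply Doob's maximal inequality on each. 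This gives a per-interval bound of $e^{-\theta(r^2c_1-CD_0/\tilde\lambda)}$, and the union bound multiplies it by $T/\varepsilon$.

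Choose $r=r_*$, so that $r_*^2\gtrsim D_0/\lambda_0$. The three terms in $r_*$ are exactly the square roots of the three contributions to $D_0/\lambda_0$. To make the exponent at least $\kappa/\varepsilon+\log(T/\varepsilon)$ we need $\theta r_*^2\gtrsim1/\varepsilon$. This is where $\sigma\le\sigma_c$ enters: the Jacobian term requires $\sigma\lesssim\sqrt\varepsilon$. The curvature term (a drift, not a diffusion) forces $r_*\ge\sigma^2M_{C_2}M_h^2/\lambda_0$, and we also need $r_*\le R_0$ with the nonlinear absorption consistent. Balancing these is what yields the branch $\sigma\lesssim\varepsilon^{1/4}L_{\mathrm{geom}}/\sqrt{\lambda_0}$. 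We will choose $C_0$ so that $D_0/\varepsilon$ stays bounded on both branches.

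\textbf{Confinement.} On the event $\{|z|\le r_*\}$, apply It\^o to $\psi(y)$. By Assumption~\ref{ass:confinement}(ii), the drift of $\psi$ in the annulus is $\le-\varepsilon c_\psi+\sigma^2M_{D^2\psi}M_h^2/2$, which is negative once $\sigma^2\ll\varepsilon$. Its martingale part has bracket $\le\sigma^2M_\psi^2M_h^2\,t$. Every crossing of the annulus of $\psi$-width $\delta_\psi$ then forces the martingale to overcome a linear negative drift. A Bernstein-type exponential martingale inequality, applied to $\exp(\beta\psi)$ with $\beta\sim\varepsilon c_\psi/(\sigma^2M_\psi^2M_h^2)$, bounds the exit probability before $T/\varepsilon$ by $e^{-C_K}$ with $C_K\sim c_\psi\delta_\psi/(\ldots)$ uniform in $\varepsilon$. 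Combining this with the $z$-bound gives \eqref{eq:path_conc}.

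\textbf{Adiabatic error, part (b).} Set $e=y-y_{\mathrm{ad}}$. Then
\[
de=\varepsilon[g(x,y)-g(X^*(y),y)]dt+\varepsilon[F(y)-F(y_{\mathrm{ad}})]dt+\sigma[h(y)-h(y_{\mathrm{ad}})]dW .
\]
Expand the first bracket as $D_xg\,z+O(|z|^2)$. The linear piece $\varepsilon\int D_xg\,z$ is not bounded crudely by $T r_*$. Instead, we use the mild form
\[
z(t)=\int e^{A(t-s)}(\ldots)\,ds-\sigma\int e^{A(t-s)}DX^*h\,dW ,
\]
so that the stochastic convolution integrated against $\varepsilon D_xg$ becomes, by stochastic Fubini, a martingale with bracket $\lesssim\varepsilon^2\sigma^2K_A^2M_{D_xg}^2M_C^2M_h^2T/(\varepsilon\lambda_0^2)$. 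A coordinatewise Gaussian-type tail gives the term $\sigma\sqrt\varepsilon$ with probability $1-p_{\mathrm{fast}}$, which matches the stated $\kappa_{\mathrm{fast}}$. The drift parts of $z$ give $\varepsilon\cdot(T/\varepsilon)\cdot(\varepsilon+\sigma^2M_{C_2})$, up to the $1/\lambda_0$ factors, hence $O(\varepsilon+\sigma^2M_{C_2})$. Finally, apply Gronwall with $L_{DF}$ over time $T/\varepsilon$ ($\varepsilon\cdot T/\varepsilon=T$). The Lipschitz noise difference is handled by BDG/Doob on a stopped process, giving $p_{\mathrm{mart}}$.

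The main obstacle is the tail-bound step: obtaining an exponent of order $1/\varepsilon$ uniformly over $T/\varepsilon$ time while the curvature drift shifts the mean of $z$. The $DP$, $D^2P$ cross terms must not destroy $\tilde\lambda$. Matching these requirements precisely to the two branches of $\sigma_c$ is where the bookkeeping is delicate.
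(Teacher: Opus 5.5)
\textbf{There are genuine gaps: one in part (a) that cannot be closed, and one in part (b) that can.}

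Your tail-bound step for (a) fails, and the problem is not bookkeeping. Take your own $\theta\lesssim\lambda_0/(\sigma^2c_2M_C^2M_h^2)$ together with the noise part $C^2\sigma^2M_C^2M_h^2/\lambda_0$ of $r_*^2$. Then the per-window exponent $\theta r_*^2$ is a constant multiple of $C^2$, independent of $\sigma$ and $\varepsilon$. No restriction such as $\sigma\lesssim\sqrt\varepsilon$ can make it $\gtrsim 1/\varepsilon$, and the union over $T/\varepsilon$ windows gives $(T/\varepsilon)e^{-cC^2}$, which is useless as $\varepsilon\to0$.

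In fact \eqref{eq:path_conc}, with the stated $r_*$ and $C,\kappa,C_K$ independent of $\varepsilon$, cannot hold at $\sigma=\sigma_c$. Take $n=m=d=1$, $f=y-x$, $g=-y$, $h=1$, $\psi=y^2$, $x_0=y_0=0$. All assumptions hold, with $X^*(y)=y$, $A\equiv-1$ and $M_{C_2}=0$ (so $\sigma_c=C_0\sqrt\varepsilon$). Then $u=z/\sigma$ solves $du=(-u+\varepsilon y/\sigma)\,dt-dW$ exactly. For $\sigma=C_0\sqrt\varepsilon$ one has $a:=r_*/\sigma\le 2C$, and the extra drift is $O(\sqrt\varepsilon)$ on $\{\tau_K>T/\varepsilon\}$. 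Staying in $[-a,a]$ on $[k,k+1]$ therefore forces $|G_k|\le 2a+1$ for the i.i.d.\ Gaussians $G_k=\int_k^{k+1}e^{-(k+1-s)}\,dW_s$. Hence the left side of \eqref{eq:path_conc} is at most $(1-p)^{\lfloor T/\varepsilon\rfloor}\to0$, where $p=\PP(|G_0|>2a+1)>0$. The right side tends to $1-e^{-C_K}>0$. An $e^{-\kappa/\varepsilon}$ bound needs a tube of width $\gtrsim\sigma/\sqrt\varepsilon$. Windows plus a union bound give width $\asymp\sigma\sqrt{\log(T/\varepsilon)}$, with only polynomially small failure probability.

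This is also where you diverge from the paper. It does not partition time; it applies Doob once to $e^{\alpha V-\alpha D_0 t}$ on all of $[0,T/\varepsilon]$, with $\alpha D_0\ge C_\kappa$. That yields $e^{-\kappa/\varepsilon}$ only at radius $r_*^2=2D_0T/(c_1\varepsilon)$ (Lemma~\ref{lem:concentration}), which is of order one when $\sigma\asymp\sqrt\varepsilon$. Both branches of $\sigma_c$, including the $\varepsilon^{1/4}$, come from keeping this inflated radius below $R$, i.e.\ $D_0/\varepsilon=O(1)$. They do not come from the balancing you describe: in your setting, $r_*\le R_0$ alone gives only $\sigma\lesssim\sqrt{R_0}\,L_{\mathrm{geom}}$. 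The paper's closing identification of its radius with $C(\varepsilon/\lambda_0+\cdots)$ is off by the same factor $\varepsilon^{-1/2}$.

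In (b) there is a second, repairable gap. Your stochastic Fubini step freezes $A$. But the true propagator $\Phi(s,r)$ of $\dot w=A(y(s))w$ depends on $y$ over $[r,s]$, and $D_xg$ is evaluated at $y(s)$. After exchanging integrals, the integrand of $dW_r$, namely $\int_r^t D_xg(s)\Phi(s,r)\,ds\,\Sigma_z(r)$, is not $\mathcal F_r$-adapted, so you do not obtain a martingale. The paper's conditioning on the slow path has a related defect. Since $\Sigma_z\,dW=-DX^*(y)\bigl(dy-\varepsilon g(x,y)\,dt\bigr)$ and $x$ is itself a functional of $y(\cdot)$, the linear part is deterministic given $y(\cdot)$, not conditionally Gaussian.

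A working substitute is a corrector. With $G(y)=D_xg(X^*(y),y)$, insert $z\,dt=A^{-1}\bigl(dz-(b+N)\,dt-\Sigma_z\,dW\bigr)$ into $\varepsilon\int Gz\,ds$ and integrate $\int GA^{-1}\,dz$ by parts. This leaves:
\begin{itemize}
\item the genuine martingale $-\varepsilon\int GA^{-1}\Sigma_z\,dW$, with bracket $O(\varepsilon\sigma^2T)$, which gives your $\sigma\sqrt\varepsilon$;
\item the systematic term $\varepsilon\int GA^{-1}b=O(\varepsilon+\sigma^2M_{C_2})$;
\item a boundary term $O(\varepsilon|z|)$, and $O(\varepsilon+\sigma^2)$ corrections from $d(GA^{-1}(y))$ and its covariation with $z$.
\end{itemize}
The quadratic part of $N$ then requires $\varepsilon\int_0^{T/\varepsilon}|z|^2\,ds\lesssim D_0T/\gamma_V$. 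This must come from an integrated bound on $V$, as in Lemma~\ref{lem:timeavg}. A supremum bound at the achievable radius $\gtrsim\sigma/\sqrt\varepsilon$ only gives $O(\sigma^2/\varepsilon)$.

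Your confinement argument and your treatment of $\sigma\int(h(y)-h(y_{\mathrm{ad}}))\,dW$ are in line with the paper's Lemmas~\ref{lem:exit_K} and~\ref{lem:mart}.
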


\begin{remark}[Non-vanishing exit probability]\label{rem:exit_prob}
The probability bound \eqref{eq:path_conc} contains the term $e^{-C_K}$
associated with slow-variable exit, which does not vanish as
$\varepsilon \to 0$. This is natural: on the slow timescale $t = O(1/\varepsilon)$,
the slow variable $y(t)$ accumulates noise of total intensity
$\sigma/\sqrt{\varepsilon}$. For any $\sigma = \sigma(\varepsilon) > 0$,
this cumulative noise is positive and can drive $y$ beyond any compact
set $K$ with positive probability. To make $e^{-C_K} \to 0$, one would
need either $\sigma = o(\sqrt{\varepsilon})$ (shrinking noise faster than
the classical threshold) or additional confinement assumptions that
strengthen the Lyapunov function $\psi$. The present formulation with
fixed $C_K$ reflects this fundamental tension between slow-variable
confinement and noise-driven exit in singularly perturbed stochastic systems.
\end{remark}

\begin{remark}[Sufficiency of the threshold]\label{rem:sufficiency}
Theorem~\ref{thm:main} establishes that path concentration holds
with high probability when $\sigma \le \sigma_c(\varepsilon)$. It
does not establish that $\sigma_c$ is a sharp threshold;
that is, we do not prove that failure necessarily occurs for
$\sigma > \sigma_c$. Proving a matching lower bound or a necessity
result for the curvature-dependent scale is left to future work.
\end{remark}

\begin{remark}[When curvature governs]
The two branches in $\sigma_c$ compare as
\[
\varepsilon^{1/4}\frac{L_{\mathrm{geom}}}{\sqrt{\lambda_0}}
< \sqrt{\varepsilon}
\;\;\Longleftrightarrow\;\;
M_{C_2}\,M_h^2 > \varepsilon^{-1/2}.
\]
Hence the curvature condition is binding precisely when the
manifold curvature is parametrically large
($M_{C_2} \gtrsim \varepsilon^{-1/2}$). For generic systems
with $M_{C_2} = O(1)$, one has $\sigma_c \sim \sqrt{\varepsilon}$,
recovering the classical scaling from
\citet{BerglundGentz2003}.
\end{remark}

\begin{remark}[Role of curvature in adiabatic error]
The term $\sigma^2 M_{C_2}$ in \eqref{eq:adiab_err}
comes from the curvature-induced systematic bias in the slow
dynamics. Under $\sigma \le \sigma_c \le C_0\sqrt{\varepsilon}$,
we have $\sigma^2 \le C_0^2 \varepsilon$, so this term is
$O(\varepsilon M_{C_2})$, which is of the same order as the
classical deterministic adiabatic error $O(\varepsilon)$.
Hence curvature does not govern adiabatic validity
in the regime where path concentration holds.
\end{remark}

\section{Proofs}
\label{sec:proofs}

We divide the proof into four parts: exit-time control for the
slow variable (Section~\ref{sec:exit}), the deviation dynamics
and Lyapunov function (Section~\ref{sec:deviation}), the
exponential supermartingale bound and path concentration
(Section~\ref{sec:martingale}), and the slow-variable error
(Section~\ref{sec:slow}).

Define the combined stopping time
\begin{equation}\label{eq:tau}
\tau = \tau_R \wedge \tau_K, \qquad
\tau_R = \inf\{t > 0 : |z(t)| \ge R\}, \quad
\tau_K = \inf\{t > 0 : y(t) \notin K\}.
\end{equation}
All subsequent estimates hold on $[0,\tau]$, where the Lyapunov
constants are well-defined and the linearisation is valid.

\subsection{Exit-time estimate for the slow variable}
\label{sec:exit}

\begin{lemma}[Slow-variable confinement]\label{lem:exit_K}
Under Assumptions~\ref{ass:NH}, \ref{ass:globs} and
\ref{ass:confinement}, for $y_0 \in K_0$ and
$\sigma \le \sigma_c(\varepsilon)$, there exists a constant
$C_K > 0$ (depending on $c_\psi, M_\psi, M_{D^2\psi}, M_h$,
$R_K - R_\star$ and independent of $\varepsilon, \sigma, T$) such
that, for $\varepsilon$ sufficiently small,
\begin{equation}\label{eq:exit_bound}
\PP\!\bigl(\tau_K \le T/\varepsilon \;\text{and}\;
\tau_K \le \tau_R\bigr)
\;\le\; \exp(-C_K).
\end{equation}
\end{lemma}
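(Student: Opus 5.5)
The plan is an exponential Lyapunov (supermartingale-type) argument for $\Phi(y)=e^{\gamma\psi(y)}$, stopped at $\tau=\tau_R\wedge\tau_K$, followed by Markov's inequality. The $T$-dependence this produces is removed using the freedom, built into \eqref{eq:sigmac}, that $C_0$ may depend on $T$.

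\textbf{Step 1: the generator.} Take $R\le R_0$ in \eqref{eq:tau}, so that on $[0,\tau)$ we have $|x-X^*(y)|\le R_0$ and $y\in K$. By It\^o's formula applied to \eqref{eq:y},
\[
d\Phi = \gamma\Phi\Bigl(\varepsilon\nabla\psi\cdot g + \tfrac{\sigma^2}{2}\tr\bigl(h^TD^2\psi\,h\bigr) + \tfrac{\gamma\sigma^2}{2}|h^T\nabla\psi|^2\Bigr)dt + \gamma\sigma\Phi\,\nabla\psi^T h\,dW_t .
\]
Set $a:=\tfrac12 M_{D^2\psi}M_h^2$ and $b:=\tfrac12 M_\psi^2M_h^2$. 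Since $\sigma\le\sigma_c\le C_0\sqrt\varepsilon$, we have $\sigma^2\le C_0^2\varepsilon$.

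\emph{Annulus $K\setminus\mathrm{int}(K_0)$.} Assumption~\ref{ass:confinement}(ii) makes the bracket at most $\varepsilon\bigl(-c_\psi+C_0^2(a+\gamma b)\bigr)$.

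\emph{Core $K_0$.} The bracket is at most $\varepsilon\bigl(M_\psi M_g+C_0^2(a+\gamma b)\bigr)$, and there $\Phi\le e^{\gamma R_\star}$.

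\textbf{Step 2: choice of $\gamma$ and $C_0$.} We will need $\gamma$ large, so we do not fix it first. Instead we impose $C_0^2(a+\gamma b)\le c_\psi$. This makes the drift nonpositive on the annulus and bounded by $\varepsilon\gamma e^{\gamma R_\star}C_1$ in the core, where $C_1:=M_\psi M_g+c_\psi$ (using $C_0^2(a+\gamma b)\le c_\psi$ again).

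The stochastic integral, stopped at $\tau\wedge T/\varepsilon$, has bounded integrand, so it is a true martingale. Integrating the drift bound over $[0,T/\varepsilon]$ and using $\psi(y_0)\le R_\star$ gives
\[
\EE\,\Phi\bigl(y(\tau\wedge T/\varepsilon)\bigr)\le e^{\gamma R_\star}\bigl(1+\gamma C_1T\bigr).
\]
On the event $\{\tau_K\le T/\varepsilon,\ \tau_K\le\tau_R\}$, continuity of paths and Assumption~\ref{ass:confinement}(i) give $\psi(y(\tau))=R_K$. Markov's inequality then yields
\[
\PP\bigl(\tau_K\le T/\varepsilon,\ \tau_K\le\tau_R\bigr)\le e^{-\gamma\delta_\psi}\bigl(1+\gamma C_1T\bigr),\qquad \delta_\psi=R_K-R_\star .
\]

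\textbf{Step 3: removing $T$.} This is the main obstacle. A naive choice of $\gamma$ leaves a factor $1+\gamma C_1 T$, which is exactly the $T$-dependence the lemma must avoid. My remedy is to exploit that $C_0$ is allowed to depend on $T$ (see the definition following \eqref{eq:sigmac}).

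Fix any target $C_K>0$ built from the listed constants, for instance $C_K=1$. Choose $\gamma=\gamma(T)$ so large that $e^{-\gamma\delta_\psi}(1+\gamma C_1T)\le e^{-C_K}$; this is possible because the left side tends to $0$ as $\gamma\to\infty$. Then shrink $C_0$ so that $C_0^2(a+\gamma b)\le c_\psi$. With these choices the bound \eqref{eq:exit_bound} holds with $C_K$ independent of $\varepsilon,\sigma,T$, uniformly for all small $\varepsilon$.

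I expect the delicate points to be two things. The first is checking that this $T$-dependent shrinking of $C_0$ is compatible with the other requirements placed on $C_0$ later, which only ever ask $C_0$ to be small. The second is confirming the sign condition on the annulus, which needs $x$ within $R_0$ of $X^*(y)$; this is why the event is intersected with $\tau_K\le\tau_R$.

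The bound holds only for $\sigma\le C_0\sqrt\varepsilon$, so the curvature branch of $\sigma_c$ plays no role here; it merely makes $\sigma_c$ smaller.
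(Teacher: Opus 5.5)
Your proof is correct, and it takes a genuinely different route from the paper's.

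\textbf{The paper's route.} The paper fixes the large exponent $\theta=\varepsilon c_\psi/(\sigma^2M_\psi^2M_h^2)$ and checks that $e^{\theta\psi}$ has nonpositive drift only on the annulus $K\setminus\mathrm{int}(K_0)$. It then isolates the final excursion through the last time $\tau_0\le\tau_K$ at which $\psi(y(\tau_0))=R_\star$. Applying Doob's inequality ``conditionally on $\mathcal F_{\tau_0}$'' gives $e^{-\theta\delta_\psi}$ with no $T$-factor.

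\textbf{Your route.} You keep the core in play. You bound the possibly positive drift inside $K_0$, integrate it over $[0,T/\varepsilon]$, and apply Markov's inequality at the stopped time to get $e^{-\gamma\delta_\psi}(1+\gamma C_1T)$. You then absorb $T$ by taking $\gamma$ large and shrinking $C_0$, which the stated $T$-dependence of $C_0$ after \eqref{eq:sigmac} permits.

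\textbf{What your route buys: rigour.} $\tau_0$ is a last-exit time, determined by the future of the path, not a stopping time. So the supermartingale cannot be restarted there. Moreover, $e^{\theta\psi}$ is not a supermartingale on all of $[0,\tau]$, since its drift can be positive in $K_0$. A correct repair by counting excursions from the core over $[0,T/\varepsilon]$ reintroduces a $T$-dependent factor.

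Some such dependence is in fact unavoidable. Take $C_0$ fixed and $\sigma=C_0\sqrt{\varepsilon}$. The time change $s=\varepsilon t$ makes the $\psi$-dynamics essentially $\varepsilon$-independent. Hence, for nondegenerate noise, the probability of leaving $K$ before $T/\varepsilon$ approaches $1$ as $T\to\infty$, while $\tau_R$ is exponentially long in $1/\varepsilon$.

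\textbf{What the paper's route would buy, if valid.} It would give an explicit $C_K=\theta\delta_\psi\ge c_\psi\delta_\psi/(C_0^2M_\psi^2M_h^2)$, which already depends on $C_0$. In your version $C_K$ is a freely prescribed target, and all the quantitative content moves into $C_0$. That $C_0$ then satisfies $C_0^{-2}\sim\log T$ and also depends on $M_g$, $M_\psi$, $c_\psi$ and $\delta_\psi$. This is consistent with the paper's own proof, which already lets $C_0$ depend on $c_\psi$ and $M_{D^2\psi}$. It is not consistent with the literal list of dependencies given after \eqref{eq:sigmac}, so you should state the enlarged dependence explicitly.
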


\begin{remark}[Interpretation of the bound]
Since $C_K$ is independent of $\varepsilon$ and $T$,
$\exp(-C_K)$ is a constant probability bound, not
decaying as $\varepsilon \to 0$. However, $C_K$ can be made
arbitrarily large by enlarging $K$ relative to $K_0$ (i.e.,
increasing $\delta_\psi = R_K - R_\star$) or by increasing the
confinement strength $c_\psi$ (via Assumption~\ref{ass:confinement}),
so that $\exp(-C_K)$ is negligible in practice. Combined with the
path-concentration bound
$\PP(\tau_R \le T/\varepsilon \;\text{and}\; \tau_R \le \tau_K)
\le \exp(-\kappa/\varepsilon)$ from
Lemma~\ref{lem:concentration}, the total probability that the
stopped domain $[0,\tau]$ fails to cover $[0,T/\varepsilon]$ is
at most $\exp(-\kappa/\varepsilon) + \exp(-C_K)$.
\end{remark}

\begin{proof}[Proof of Lemma~\ref{lem:exit_K}]
Work on the stopped domain $[0,\tau]$ with $\tau = \tau_R \wedge
\tau_K$ from \eqref{eq:tau}. For $s \le \tau$, we have $|z(s)| \le
R$ and $y(s) \in K$, so $\psi(y(s)) \le R_K$.
Apply It\^o's formula to $\psi(y(t))$:
\begin{align}
d\psi(y) &= \Bigl[\varepsilon\, \nabla\psi(y) \cdot g(X^*(y) + z, y)
+ \tfrac{\sigma^2}{2}\tr\bigl(h(y)h(y)^T D^2\psi(y)\bigr)\Bigr]dt
+ \sigma\,\nabla\psi(y)^T h(y)\,dW_t.\label{eq:dpsi}
\end{align}
For $y(s) \in K \setminus \mathrm{int}(K_0)$, the
confinement condition (Assumption~\ref{ass:confinement}(ii))
gives $\nabla\psi \cdot g(X^* + z, y) \le -c_\psi$ whenever
$|z| \le R_0$. Choosing the stopping radius $R \le R_0$, the
drift of $\psi$ on $\{t \le \tau\} \cap \{y(t) \notin \mathrm{int}(K_0)\}$
satisfies
\[
\varepsilon\,\nabla\psi \cdot g + \tfrac{\sigma^2}{2}
\tr(hh^T D^2\psi)
\le -\varepsilon c_\psi + \tfrac{\sigma^2}{2} M_h^2 M_{D^2\psi}.
\]

Since $\sigma \le \sigma_c \le C_0\sqrt{\varepsilon}$,
\[
\frac{\sigma^2}{2}M_h^2M_{D^2\psi}
\le
\frac{C_0^2}{2}\varepsilon M_h^2M_{D^2\psi}.
\]
Choosing the constant $C_0$ sufficiently small so that
\[
C_0^2 M_h^2 M_{D^2\psi}\le c_\psi,
\]
we obtain
\[
\varepsilon\,\nabla\psi\cdot g
+\frac{\sigma^2}{2}\operatorname{tr}(hh^TD^2\psi)
\le -\frac{\varepsilon c_\psi}{2}.
\]

Define the exponential process on $[0,\tau]$
\[
E(t) = \exp\!\bigl(\theta\,\psi(y(t \wedge \tau))\bigr),
\qquad
\theta = \frac{\varepsilon c_\psi}{\sigma^2 M_\psi^2 M_h^2}.
\]
For $y \notin \mathrm{int}(K_0)$, the generator of $E$ satisfies
$\mathcal{L}E = E\bigl[\theta\,\mathcal{L}\psi + \tfrac{\theta^2}{2}\sigma^2\nabla\psi^T hh^T\nabla\psi\bigr]$.
Substituting $\mathcal{L}\psi \le -\varepsilon c_\psi/2$ and
$\nabla\psi^T hh^T\nabla\psi \le M_\psi^2 M_h^2$:
\[
\mathcal{L}E \le E\bigl[-\tfrac{\theta\varepsilon c_\psi}{2}
+ \tfrac{\theta^2\sigma^2 M_\psi^2 M_h^2}{2}\bigr].
\]

The choice of $\theta$ makes
\[
\theta^2\sigma^2 M_\psi^2 M_h^2
=
\theta\,\varepsilon c_\psi,
\]
so
\[
\mathcal{L}E
\le
E\Bigl[
-\frac{\theta\varepsilon c_\psi}{2}
+
\frac{\theta\varepsilon c_\psi}{2}
\Bigr]
=0.
\]
Thus $E$ is a local supermartingale on
$\{y\notin \mathrm{int}(K_0)\}$.

Since $y(t\wedge\tau)\in K$ and
$\psi(y)\le R_K$ on $K$,
\[
0\le E(t\wedge\tau)
=
e^{\theta\psi(y(t\wedge\tau))}
\le e^{\theta R_K}.
\]
Hence $E(t\wedge\tau)$ is bounded and therefore uniformly
integrable. Consequently the local supermartingale is a true
supermartingale on $[0,\tau]$.

Consider the event $\mathcal{E} = \{\tau_K \le T/\varepsilon \;
\text{and}\; \tau_K \le \tau_R\}$. On $\mathcal{E}$, at time
$\tau_K$, continuity of paths and $K = \{\psi \le R_K\}$ gives
$\psi(y(\tau_K)) = R_K$. Define
\[
\tau_0
:=
\sup\{\,t\le\tau_K:
\psi(y(t))=R_\star\,\}.
\]
Since $\psi(y(0))\le R_\star$ and
$\psi(y(\tau_K))=R_K>R_\star$ on $\mathcal E$,
continuity of the sample paths implies that
$\tau_0$ is well defined and
\[
\psi(y(t))\ge R_\star,
\qquad
t\in[\tau_0,\tau_K].
\]
Hence
\[
y(t)\in K\setminus\mathrm{int}(K_0),
\qquad
t\in[\tau_0,\tau_K].
\]
At time $\tau_0$, $\psi(y(\tau_0)) = R_\star$, so
$E(\tau_0) = e^{\theta R_\star}$.
At time $\tau_K$, $\psi(y(\tau_K)) = R_K$, so
$E(\tau_K) = e^{\theta R_K}$. Applying Doob's maximal inequality conditionally on
$\mathcal F_{\tau_0}$ to the non-negative supermartingale
$\{E(t\wedge\tau)\}_{t\ge\tau_0}$ gives:
\[
\PP\!\Bigl(\sup_{\tau_0 \le t \le \tau_K} E(t) \ge
e^{\theta R_K}
\;\Big|\; \mathcal{F}_{\tau_0}\Bigr)
\le e^{-\theta R_K}\,E(\tau_0)
= e^{-\theta(R_K - R_\star)}.
\]
Since $\mathcal{E}$ implies this supremum event, taking
expectations gives
\[
\PP(\mathcal{E}) \le e^{-\theta\delta_\psi} = e^{-C_K}, \qquad
C_K := \theta(R_K - R_\star)
= \frac{\varepsilon c_\psi(R_K - R_\star)}{\sigma^2 M_\psi^2 M_h^2}.
\]
Since $\sigma \le C_0\sqrt{\varepsilon}$, we have
$C_K \ge \frac{c_\psi\delta_\psi}{C_0^2 M_\psi^2 M_h^2} > 0$,
independent of $\varepsilon$ and $T$.
\end{proof}

\begin{remark}[Large deviation perspective]
The exponential-tail estimate derived above is reminiscent of the Freidlin--Wentzell large-deviation framework \citep{FreidlinWentzell2012}, where the Lyapunov function $\psi$ assumes the role of a local quasipotential avoiding explicit excursion-counting arguments. Large-deviation techniques for establishing confinement in multiscale diffusions are further developed in \citep{Veretennikov1999, DupuisSpiliopoulos2012}.
\end{remark}

\subsection{Deviation dynamics and Lyapunov function}
\label{sec:deviation}

Define the fast-fibre deviation $z(t) = x(t) - X^*(y(t))$.

\begin{remark}[Choice of $M_0$ over $M_\varepsilon$]
We define $z$ relative to the deterministic critical manifold $M_0$
rather than the Fenichel slow manifold $M_\varepsilon = \{(X^\varepsilon(y),y)\}$
for two reasons. First, $M_0$ is explicitly available via the
implicit equation $f(X^*(y),y) = 0$, and its curvature $D^2X^*$ can
be computed analytically or numerically, which is essential for the
practical evaluation of the geometric scale $L_{\mathrm{geom}}$.
Second, the $O(\varepsilon)$ deterministic error introduced by using
$M_0$ rather than $M_\varepsilon$ is already accounted for in our
path-concentration bound (Theorem~\ref{thm:main}(a)). Under
$\sigma \le \sigma_c(\varepsilon)$, this contribution is never larger
than the leading noise-induced terms retained in the estimate. Using $M_\varepsilon$ 
would slightly improve the constants but would not alter the scaling of $\sigma_c$
or the structure of the curvature term.
\end{remark}

\begin{lemma}[Exact deviation SDE]\label{lem:exact_z}
The process $z$ satisfies
\begin{equation}\label{eq:z_exact}
dz = \Bigl[f(X^*\!+\!z,y) - \varepsilon DX^* g(X^*\!+\!z,y)
- \tfrac{\sigma^2}{2}D^2X^*[h\otimes h]\Bigr]dt
- \sigma DX^* h(y)\,dW_t,
\end{equation}
where all coefficients are evaluated at $y = y(t)$.
The diffusion $\Sigma_z(y) = -\sigma DX^*(y) h(y)$
is independent of $z$.
\end{lemma}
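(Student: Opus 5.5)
The plan is to apply the multidimensional It\^o formula to the composite process $X^*(y(t))$ and subtract the result from the deterministic equation \eqref{eq:x} for $x$. Since $x$ has no martingale part, $z = x - X^*(y)$ is a semimartingale whose martingale part comes entirely from $X^*(y)$. Its finite-variation part is $f(x,y)\,dt$ minus the drift of $X^*(y)$. Finally we substitute $x = X^*(y)+z$.

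First, $X^*$ must be $C^2$ along the path, which requires $y(t)$ to lie in the domain of $X^*$. We therefore work on $[0,\tau_K]$, where $X^*\in C^3(K,\RR^n)$ by Assumption~\ref{ass:smooth}; for the smooth extension of $X^*$ to $\RR^m$ the identity holds globally in the same way. Applying It\^o's formula componentwise to $X^*_i(y)$, using $dy = \varepsilon g\,dt + \sigma h\,dW_t$ and $d\langle y_j,y_k\rangle_t = \sigma^2 (hh^T)_{jk}\,dt$, gives
\[
dX^*_i(y) = \sum_j \partial_j X^*_i\,\bigl(\varepsilon g_j\,dt + \sigma (h\,dW_t)_j\bigr) + \tfrac{\sigma^2}{2}\sum_{j,k}\partial_{jk}X^*_i\,(hh^T)_{jk}\,dt .
\]
The last sum equals $\tfrac{\sigma^2}{2}\sum_{j,k,\ell}\partial_{jk}X^*_i\,h_{j\ell}h_{k\ell}$, which is precisely the coordinate expression for $(D^2X^*[h\otimes h])_i$ given in the introduction. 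In vector form,
\[
dX^*(y) = \bigl[\varepsilon DX^* g(x,y) + \tfrac{\sigma^2}{2}D^2X^*[h\otimes h]\bigr]dt + \sigma DX^* h\,dW_t .
\]
Here $g$ is evaluated at the actual state $(x,y) = (X^*(y)+z,y)$, not on the manifold.

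Subtracting this from $dx = f(x,y)\,dt$ and writing $x = X^*(y)+z$ yields \eqref{eq:z_exact}. The diffusion coefficient $-\sigma DX^*(y)h(y)$ depends only on $y$, because $h$ is independent of $x$ by the setup, so it is independent of $z$. No step poses a real obstacle. The only points needing care are bookkeeping: matching the tensor contraction convention for $D^2X^*[h\otimes h]$ with the quadratic covariation $hh^T$, and noting that the identity is local, valid up to $\tau_K$ or globally via the extension.
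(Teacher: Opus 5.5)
Your proposal is correct and matches the paper's proof: apply It\^o's formula to $X^*(y(t))$ to obtain the correction $\tfrac{\sigma^2}{2}\sum_{j,k}\partial^2_{y_jy_k}X^*\,(hh^T)_{jk}\,dt$, then subtract from $dx=f\,dt$ and substitute $x=X^*(y)+z$. Your localisation to $[0,\tau_K]$, or use of the smooth extension so that It\^o's formula applies, is a reasonable piece of care that the paper leaves implicit.
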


\begin{proof}
Apply It\^o's formula to $X^*(y(t))$. Since
$dy = \varepsilon g(x,y)\,dt + \sigma h(y)\,dW_t$, the
It\^o correction is
$\tfrac{\sigma^2}{2}\sum_{j,k}\partial^2_{y_j y_k}X^*
(hh^T)_{jk}\,dt = \tfrac{\sigma^2}{2}D^2X^*[h\otimes h]\,dt$.
Subtracting $d(X^*(y))$ from $dx = f(x,y)\,dt$ gives
\eqref{eq:z_exact}.
\end{proof}

\begin{lemma}[Taylor expansion of the $z$-drift]\label{lem:expansion}
Define $b_\varepsilon(y) = -\varepsilon DX^*(y) g_0(y)$
and $b_\sigma(y) = -\frac{\sigma^2}{2}D^2X^*(y)[h(y)\otimes h(y)]$
where $g_0(y) = g(X^*(y),y)$. Then \eqref{eq:z_exact} rewrites as
\begin{equation}
dz = \bigl[A(y)z + b(y) + N(z,y)\bigr]dt + \Sigma_z(y)\,dW_t,
\end{equation}
where $b(y) = b_\varepsilon(y) + b_\sigma(y)$. Note that the remainder term 
$N(z,y)$ contains the $\varepsilon$-dependent linear correction 
$-\varepsilon DX^*(y) D_xg(X^*(y),y)\,z$ in addition to genuine nonlinear 
Taylor remainders, and satisfies
\[
\|N(z,y)\| \le \Gamma_1|z| + \Gamma_2|z|^2 + C_f|z|^3
\]
with $\Gamma_1 = \varepsilon M_C M_{D_xg}$ and
$\Gamma_2 = \frac{M_B}{2} + \varepsilon M_C C_g$, and
\[
\|b(y)\| \le \beta := \varepsilon M_C M_g
+ \tfrac{\sigma^2}{2}M_{C_2}M_h^2.
\]
\end{lemma}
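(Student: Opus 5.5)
The plan is to start from the exact deviation SDE of Lemma~\ref{lem:exact_z} and Taylor-expand each drift term around $z=0$ with $y$ frozen. The diffusion term $-\sigma DX^* h\,dW_t$ is already $\Sigma_z(y)\,dW_t$, and the curvature drift $-\tfrac{\sigma^2}{2}D^2X^*[h\otimes h]$ does not depend on $z$; it is exactly $b_\sigma(y)$. So all of the work lies in the two $z$-dependent drift terms $f(X^*+z,y)$ and $-\varepsilon DX^* g(X^*+z,y)$.

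\textbf{Expanding $f$.} Since $f(X^*(y),y)=0$ by \eqref{eq:manifold} and $f\in C^4$ (Assumption~\ref{ass:smooth}), Taylor's theorem at $x=X^*(y)$ gives
\[
f(X^*+z,y) = A(y)z + \tfrac12 D_x^2 f(X^*(y),y)[z,z] + R_f(z,y).
\]
This is the definition of $R_f$ referred to in Assumption~\ref{ass:stop}. The quadratic term is bounded by $\tfrac{M_B}{2}|z|^2$, and on $\{|z|\le R_0\}\cap\{y\in K\}$ the remainder is bounded by $C_f|z|^3$.

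\textbf{Expanding $g$.} Similarly,
\[
g(X^*+z,y) = g_0(y) + D_xg(X^*(y),y)\,z + R_g(z,y),
\]
with $|R_g|\le C_g|z|^2$. Multiplying by $-\varepsilon DX^*(y)$ produces three pieces:
\begin{itemize}
\item the constant term $b_\varepsilon(y)$;
\item the linear correction $-\varepsilon DX^*D_xg\,z$, of norm at most $\varepsilon M_CM_{D_xg}|z|=\Gamma_1|z|$;
\item a quadratic piece of norm at most $\varepsilon M_C C_g|z|^2$.
\end{itemize}

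\textbf{Collecting terms.} Set
\[
N(z,y) := \tfrac12 D_x^2f[z,z] + R_f - \varepsilon DX^* D_xg\,z - \varepsilon DX^* R_g .
\]
The triangle inequality, using $\|D_x^2 f(X^*,y)\|\le M_B$ and $\|DX^*\|\le M_C$, then gives the stated bound with $\Gamma_2 = M_B/2+\varepsilon M_CC_g$. For $b$, we have $|b_\varepsilon|\le \varepsilon M_C M_g$ because $(X^*(y),y)\in U$. Also $|b_\sigma|\le\tfrac{\sigma^2}{2}M_{C_2}M_h^2$ by the norm convention $|D^2X^*[h\otimes h]|\le\|D^2X^*\|\,|h|^2$. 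Adding these gives $\beta$.

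\textbf{Points needing care.} The argument is bookkeeping rather than a genuine obstacle, but three things need checking.
\begin{itemize}
\item The bounds hold only on $\{|z|\le R_0\}\cap\{y\in K\}$, i.e.\ on the stopped domain with $R\le R_0$. I will state this explicitly.
\item The points $(X^*+z,y)$ must lie in $U$ so that $M_g$ and $M_{D_xg}$ apply. I will shrink $R_0$ if necessary so the $R_0$-tube sits inside $U$.
\item Norms must be used consistently. I will use the Frobenius norm for $h$ and the induced norm for $DX^*$ and $D^2X^*$, so that no dimension-dependent constants enter.
\end{itemize}
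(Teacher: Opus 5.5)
Your proposal is correct and follows essentially the same route as the paper: Taylor-expand $f$ and $g$ about $z=0$, collect $\tfrac12 B[z,z] + R_f - \varepsilon DX^*D_xg\,z - \varepsilon DX^*R_g$ into $N$, and bound each term with the triangle inequality. Restricting explicitly to $\{|z|\le R_0\}\cap\{y\in K\}$ and using the Frobenius norm for $h$ is exactly right (the paper leaves the former implicit via Assumption~\ref{ass:stop}); the concern that the tube must lie in $U$ is harmless but unnecessary, since $g_0$ and $D_xg$ are only evaluated on $M_0$.
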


\begin{proof}
Expand $f(X^*+z,y) = A(y)z + \tfrac12 B(y)[z,z] + R_f(z,y)$
and $g(X^*+z,y) = g_0 + D_xg \cdot z + R_g(z,y)$. Substituting
into \eqref{eq:z_exact} and grouping terms gives the stated
form with
\[
N(z,y) = \tfrac12 B(y)[z,z] - \varepsilon DX^* D_xg z
- \varepsilon DX^* R_g + R_f .
\]
Applying the triangle inequality and the bounds in
Definition~2.2 yields
\[
\begin{aligned}
\|N(z,y)\|
&\le \tfrac12 \|B(y)[z,z]\| + \varepsilon\|DX^*\|\,\|D_xg\|\,\|z\|
   + \varepsilon\|DX^*\|\,\|R_g\| + \|R_f\| \\
&\le \tfrac{M_B}{2}|z|^2 + \varepsilon M_C M_{D_xg}|z|
   + \varepsilon M_C C_g|z|^2 + C_f|z|^3,
\end{aligned}
\]
which is exactly $\|N(z,y)\| \le \Gamma_1|z| + \Gamma_2|z|^2 + C_f|z|^3$
with the constants $\Gamma_1,\Gamma_2$ stated above.
\end{proof}

\begin{lemma}[It\^o expansion of $V = z^T P(y) z$]\label{lem:ito_V}
The Lyapunov function $V(z,y) = z^T P(y) z$ satisfies
\[
dV = \mathcal{L}V\,dt + dM_V,
\]
where the drift decomposes as
\begin{align}
\mathcal{L}V &= -|z|^2 + 2z^T P(y) b(y) + 2z^T P(y) N(z,y) \notag\\
&\quad + \varepsilon\, z^T DP[g_{X^*+z}]\,z
+ \sigma^2\, h^T(DX^*)^T P\, DX^* h \notag\\
&\quad + \tfrac{\sigma^2}{2}\,z^T D^2P[h\otimes h]\,z
- 2\sigma^2\, z^T DP[h]\, DX^* h,\label{eq:LV}
\end{align}
and the martingale part is
\begin{equation}\label{eq:MV}
dM_V = \sigma\bigl[-2z^T P(y) DX^* h + z^T DP[h]\, z\bigr]dW_t.
\end{equation}
\end{lemma}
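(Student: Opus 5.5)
The plan is to apply the multidimensional It\^o formula to the $C^2$ function $V(z,y)=z^TP(y)z$ of the joint semimartingale $(z,y)$. For $z$ we use the decomposition $dz=[A(y)z+b(y)+N(z,y)]\,dt+\Sigma_z(y)\,dW_t$ from Lemma~\ref{lem:expansion}, with $\Sigma_z=-\sigma DX^*h$ (Lemma~\ref{lem:exact_z}). For $y$ we use \eqref{eq:y}, writing $g_{X^*+z}:=g(X^*(y)+z,y)$.

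The regularity needed is $P\in C^2$ on $K$, which follows from Assumption~\ref{ass:smooth} and the Sylvester-equation argument of Appendix~\ref{sec:appendix}. We work up to the stopping time $\tau$ of \eqref{eq:tau}, so that all coefficients stay bounded and the stochastic integral in \eqref{eq:MV} is a genuine (local) martingale.

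\textbf{Step 1: derivatives of $V$.} We record
\[
\partial_zV=2Pz,\qquad \partial_z^2V=2P,\qquad \partial_{y_k}V=z^T\partial_kP\,z,
\]
\[
\partial_{y_jy_k}V=z^T\partial_{jk}P\,z,\qquad \partial_{z}\partial_{y_k}V=2\,\partial_kP\,z.
\]
We write $DP[u]=\sum_k u_k\partial_kP$ and $D^2P[u\otimes v]=\sum_{j,k}u_jv_k\partial_{jk}P$. A matrix argument such as $h$ is understood as summed over its $d$ columns $h_\ell$, matching the convention for $D^2X^*[h\otimes h]$ in the introduction.

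\textbf{Step 2: first-order terms.} The $z$-drift contributes $2z^TP(Az+b+N)$. Using $z^TPAz=\tfrac12 z^T(A^TP+PA)z$ together with \eqref{eq:lyap}, the linear part becomes $-|z|^2$, leaving $2z^TPb+2z^TPN$. The $y$-drift contributes $\varepsilon\,z^TDP[g_{X^*+z}]\,z$.

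\textbf{Step 3: second-order terms.} We compute the three quadratic covariations.
\begin{itemize}
\item The $zz$ block is $d\langle z\rangle=\sigma^2DX^*hh^T(DX^*)^T\,dt$. It gives $\tfrac12\tr\bigl(2P\,\sigma^2DX^*hh^T(DX^*)^T\bigr)=\sigma^2\tr\bigl(h^T(DX^*)^TP\,DX^*h\bigr)$, which is the term written $\sigma^2h^T(DX^*)^TPDX^*h$ in \eqref{eq:LV}.
\item The $yy$ block is $d\langle y\rangle=\sigma^2hh^T\,dt$. It gives $\tfrac{\sigma^2}{2}z^TD^2P[h\otimes h]z$.
\item The mixed block is $d\langle z_i,y_k\rangle=-\sigma^2(DX^*hh^T)_{ik}\,dt$. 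The two symmetric cross terms in It\^o's formula carry total weight $1$, so this block gives
\[
\sum_{i,k}(2\partial_kP\,z)_i\bigl(-\sigma^2(DX^*hh^T)_{ik}\bigr)=-2\sigma^2\sum_\ell z^TDP[h_\ell]\,DX^*h_\ell .
\]
Here the symmetry of $\partial_kP$ has been used.
\end{itemize}

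\textbf{Step 4: martingale part.} Collecting the $dW$ terms gives
\[
2z^TP\,(-\sigma DX^*h\,dW_t)+\sum_k z^T\partial_kP\,z\,(\sigma h\,dW_t)_k,
\]
which is \eqref{eq:MV}.

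The only real obstacle is bookkeeping, concentrated in the mixed $z$--$y$ covariation of Step 3. One must get its sign right (it comes from the minus sign in $\Sigma_z$) and its factor right (two half-weighted cross terms combine into weight one). One must also state precisely the index contraction behind the compact notation $z^TDP[h]DX^*h$: it means a sum over noise columns of $DP$ evaluated in direction $h_\ell$ and applied to $DX^*h_\ell$. I would settle this first by writing everything in coordinates $(i,k,\ell)$ and only then translating back to tensor notation. As a check, in the constant-$A$ case of Example~5.1 the $DP$ and $D^2P$ terms must vanish.
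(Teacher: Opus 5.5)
Your proposal is correct and takes essentially the same route as the paper. You apply It\^o's formula to $V$ on the joint process $(z,y)$, use the Lyapunov equation to turn $2z^TPAz$ into $-|z|^2$, and take the cross term $-2\sigma^2 z^TDP[h]DX^*h$ from the mixed covariation $d\langle z,y\rangle=-\sigma^2DX^*hh^T\,dt$. Your coordinate bookkeeping (total weight one for the mixed block, summation over the noise columns $h_\ell$, and reading $\sigma^2h^T(DX^*)^TP\,DX^*h$ as a trace when $d>1$) makes explicit what the paper's term-by-term list leaves implicit.
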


\begin{proof}
Apply It\^o's formula to $V(z,y)$ using the joint process
$(z(t),y(t))$. The terms arise as: (i) $2(Pz)^T(Az) = z^T(A^T P + PA)z
= -|z|^2$; (ii) $2(Pz)^T b$; (iii) $2(Pz)^T N$; (iv)
the $DP$-It\^o correction from $dy$;
(v) half the trace from $\Sigma_z \Sigma_z^T$;
(vi) half the second-order It\^o correction from $d\langle y \rangle$;
(vii) the cross-It\^o correction from
$d\langle z, y\rangle = \Sigma_z \Sigma_y^T\,dt
= -\sigma^2 DX^* hh^T\,dt$.
\end{proof}

\begin{lemma}[Foster--Lyapunov bound]\label{lem:foster}
On the domain $\mathcal{D}_R = \{|z| \le R, y \in K\}$
with $R$ chosen so that
\begin{equation}\label{eq:Rcond}
2c_2\Bigl(\Gamma_2 + \varepsilon M_{DP} M_{D_xg}\Bigr)R
+ 2c_2 C_f R^2 \;\le\; \frac{\tilde\lambda}{4},
\end{equation}
the effective decay rate
\[
\tilde\lambda := 1 - 2c_2\Gamma_1 - \varepsilon M_{DP}M_g
- \frac{\sigma^2}{2}M_{D^2P}M_h^2
\]
is strictly positive, and $\mathcal{L}V$ satisfies
\begin{equation}\label{eq:foster}
\mathcal{L}V \;\le\; -\gamma_V V + D_0,
\qquad \gamma_V = \frac{\tilde\lambda}{2c_2},
\end{equation}
with forcing constant
\begin{equation}\label{eq:D0}
D_0 = \sigma^2 c_2 M_C^2 M_h^2
+ \frac{4}{\tilde\lambda}\bigl(c_2\beta + \sigma^2 M_{DP} M_C M_h^2\bigr)^2.
\end{equation}
\end{lemma}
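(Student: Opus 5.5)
We start from the drift decomposition \eqref{eq:LV} of Lemma~\ref{lem:ito_V}, restrict to $\mathcal{D}_R$, and bound each of the seven terms by $|z|^2$, $|z|$ or a constant. The terms are then regrouped into $-\tilde\lambda|z|^2$, a cubic/quartic remainder absorbed via \eqref{eq:Rcond}, a linear forcing term handled by Young's inequality, and a pure constant.

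\textbf{Step 1: quadratic terms.} Term (i) contributes exactly $-|z|^2$. Using $\|P\|\le c_2$ and Lemma~\ref{lem:expansion}, term (iii) satisfies
\[
2z^TPN \le 2c_2\bigl(\Gamma_1|z|^2 + \Gamma_2|z|^3 + C_f|z|^4\bigr).
\]
For term (iv), we write $g(X^*+z,y) = g_0 + (g - g_0)$ with $|g-g_0|\le M_{D_xg}|z|$. This gives
\[
\varepsilon z^TDP[g]z \le \varepsilon M_{DP}M_g|z|^2 + \varepsilon M_{DP}M_{D_xg}|z|^3.
\]
The prefactor conventions here should be matched to the $\tilde\lambda$ definition; if the paper's $\tilde\lambda$ lacks a factor $c_2$ or $2$, we absorb it into the constants. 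Term (vi) is bounded by $\tfrac{\sigma^2}{2}M_{D^2P}M_h^2|z|^2$. Collecting the $|z|^2$ coefficients gives exactly $-\tilde\lambda|z|^2$. The cubic and quartic pieces are at most
\[
|z|^2\Bigl[2c_2(\Gamma_2+\varepsilon M_{DP}M_{D_xg})R + 2c_2C_fR^2\Bigr] \le \tfrac{\tilde\lambda}{4}|z|^2
\]
by \eqref{eq:Rcond}, using $|z|\le R$. So the nonlinear part leaves at least $-\tfrac{3\tilde\lambda}{4}|z|^2$. Positivity of $\tilde\lambda$ follows for $\varepsilon$ small and $\sigma\le\sigma_c\le C_0\sqrt{\varepsilon}$, since $\Gamma_1=O(\varepsilon)$ and the remaining corrections are $O(\varepsilon)$.

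\textbf{Step 2: linear and constant terms.} Term (ii) is at most $2c_2\beta|z|$ by Lemma~\ref{lem:expansion}. Term (vii) is at most $2\sigma^2M_{DP}M_CM_h^2|z|$. Together these give $2a|z|$ with
\[
a = c_2\beta + \sigma^2M_{DP}M_CM_h^2.
\]
Young's inequality yields
\[
2a|z| \le \tfrac{\tilde\lambda}{4}|z|^2 + \tfrac{4a^2}{\tilde\lambda},
\]
which is precisely the second summand in \eqref{eq:D0}. Term (v), the Itô trace from $\Sigma_z$, is bounded by $\sigma^2c_2M_C^2M_h^2$ using the Frobenius convention for $h$. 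This is the first summand of $D_0$.

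\textbf{Step 3: conversion to $V$.} Combining the above gives $\mathcal{L}V\le -\tfrac{\tilde\lambda}{2}|z|^2 + D_0$. Since $V\le c_2|z|^2$, we have $-|z|^2\le -V/c_2$, and hence
\[
\mathcal{L}V \le -\frac{\tilde\lambda}{2c_2}V + D_0 = -\gamma_V V + D_0.
\]

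\textbf{Main obstacle.} The main difficulty is the bookkeeping in term (iv), where $g$ is evaluated off the manifold, and in the cross-variation term (vii). In both, tensor-norm conventions for $DP[\cdot]$ and $D^2P[h\otimes h]$ must give constants exactly matching $\tilde\lambda$ and $D_0$. I would fix these conventions first, as operator norms contracted with $|h|_F$, and verify each bound term by term.
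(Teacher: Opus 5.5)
Your proposal matches the paper's proof essentially step for step. It uses the same term-by-term bounds on (ii)--(vii), absorbs the cubic and quartic remainders through \eqref{eq:Rcond}, applies the same Young splitting $2a|z|\le\frac{\tilde\lambda}{4}|z|^2+\frac{4}{\tilde\lambda}a^2$ with $a=c_2\beta+\sigma^2M_{DP}M_CM_h^2$ to produce $D_0$, and finishes with the same conversion via $V\le c_2|z|^2$.

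There is one bookkeeping slip, which you inherit from the paper. Term (iv) contributes $\varepsilon M_{DP}M_{D_xg}|z|^3$ with no factor $2c_2$, but \eqref{eq:Rcond} controls $2c_2\varepsilon M_{DP}M_{D_xg}R$. Your absorption inequality therefore tacitly requires $2c_2=K_A^2/\lambda_0\ge 1$. If that fails, drop the $2c_2$ from that term in the radius condition.
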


\begin{remark}[Validity of $\tilde\lambda > 0$]\label{rem:lambda_pos}
Since $K_A$, $\|DA\|_K$, and $\|D^2A\|_K$ are fixed system constants,
$M_{DP}$ and $M_{D^2P}$ are finite. The terms $\varepsilon M_{DP} M_g$
and $\sigma^2 M_{D^2P} M_h^2/2$ in $\tilde\lambda$ vanish as
$\varepsilon, \sigma \to 0$. Hence $\tilde\lambda > 0$ holds for all
$\varepsilon \le \varepsilon_0$ and $\sigma \le \sigma_0$ with
$\varepsilon_0, \sigma_0$ depending only on these system constants.
\end{remark}

\begin{proof}
Bound each term in \eqref{eq:LV} using the constants of
Definition~\ref{def:constants} and Appendix~\ref{sec:appendix}:
\begin{align*}
\text{(ii)} &\le 2c_2\beta |z|, \\
\text{(iii)} &\le 2c_2\bigl(\Gamma_1|z|^2 + \Gamma_2|z|^3 + C_f|z|^4\bigr), \\
\text{(iv)} &\le \varepsilon M_{DP}(M_g + M_{D_xg}|z|)|z|^2, \\
\text{(v)} &\le \sigma^2 c_2 M_C^2 M_h^2 =: \Theta_0, \\
\text{(vi)} &\le \tfrac{\sigma^2}{2}M_{D^2P}M_h^2|z|^2, \\
\text{(vii)} &\le 2\sigma^2 M_{DP} M_C M_h^2 |z|.
\end{align*}

We group the quadratic terms explicitly to isolate the primary decay rate 
$-\tilde\lambda|z|^2$. Combine the two linear contributions in $|z|$ to obtain 
$2\bigl(c_2\beta + \sigma^2 M_{DP} M_C M_h^2\bigr)|z|$. Applying Young's inequality 
$2uv \le \frac{\tilde\lambda}{4}u^2 + \frac{4}{\tilde\lambda}v^2$ with $u = |z|$ 
and $v = c_2\beta + \sigma^2 M_{DP} M_C M_h^2$ yields exactly the term 
$\frac{4}{\tilde\lambda}\bigl(c_2\beta + \sigma^2 M_{DP} M_C M_h^2\bigr)^2$ 
which is added to $\Theta_0$ to form $D_0$, while generating a positive 
quadratic contribution $\frac{\tilde\lambda}{4}|z|^2$.

For $|z|\le R$, the stopping-radius condition \eqref{eq:Rcond} implies
\[
2c_2\Gamma_2|z|^3
+\varepsilon M_{DP}M_{D_xg}|z|^3
+2c_2C_f|z|^4
\le
\Bigl[
2c_2(\Gamma_2+\varepsilon M_{DP}M_{D_xg})R
+
2c_2C_fR^2
\Bigr]|z|^2
\le
\frac{\tilde\lambda}{4}|z|^2.
\]

Collecting all quadratic contributions therefore gives
\[
-\tilde\lambda|z|^2
+\frac{\tilde\lambda}{4}|z|^2
+\frac{\tilde\lambda}{4}|z|^2
=
-\frac{\tilde\lambda}{2}|z|^2,
\]
where the two positive terms arise respectively from Young's
inequality and from absorbing the cubic and quartic remainders.

Since
\[
V(z,y)=z^TP(y)z\le c_2|z|^2,
\]
we have
\[
|z|^2\ge \frac{V}{c_2}.
\]
Hence
\[
-\frac{\tilde\lambda}{2}|z|^2
\le
-\frac{\tilde\lambda}{2c_2}V
=
-\gamma_VV,
\]
which proves \eqref{eq:foster}.
\end{proof}

\subsection{Exponential martingale and path concentration}
\label{sec:martingale}

\begin{lemma}[Quadratic variation bound]\label{lem:QV}
On $\mathcal{D}_R$, the predictable quadratic variation of
$M_V$ satisfies $d\langle M_V\rangle / dt \le \eta^2 V$ with
\[
\eta^2 = \frac{2\sigma^2}{c_1}\Bigl(4c_2^2 M_C^2 M_h^2
+ M_{DP}^2 M_h^2 R^2\Bigr).
\]
\end{lemma}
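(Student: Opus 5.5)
The bound follows by computing the quadratic variation directly from the martingale part \eqref{eq:MV} of Lemma~\ref{lem:ito_V}, estimating the integrand pointwise on $\mathcal{D}_R$, and converting $|z|^2$ into $V$ with the lower bound $P(y)\ge c_1 I$ from \eqref{eq:Pbounds}.

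\emph{Step 1 (quadratic variation).} Write the integrand of \eqref{eq:MV} as the row vector $\sigma\,\xi^T \in \RR^{1\times d}$, where
\[
\xi^T = -2z^T P(y) DX^*(y) h(y) + z^T DP(y)[h(y)]\, z .
\]
Here $DP[h]$ is understood columnwise: for each $\ell = 1,\dots,d$, the $\ell$-th entry is $z^T DP[h_{\cdot\ell}]z$. Since $W$ is a standard $d$-dimensional Brownian motion, $d\langle M_V\rangle/dt = \sigma^2|\xi|^2$.

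\emph{Step 2 (split the two pieces).} By $(a+b)^2 \le 2a^2 + 2b^2$,
\[
|\xi|^2 \le 2\,|2 h^T (DX^*)^T P z|^2 + 2\,\bigl|\bigl(z^T DP[h_{\cdot\ell}] z\bigr)_{\ell}\bigr|^2 .
\]

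\emph{Step 3 (first piece).} Use $\|P\|\le c_2$, $\|DX^*\|\le M_C$, and $\|h\|\le M_h$ (taking $\|h\|_{\mathrm{op}} \le \|h\|_F$). This gives
\[
|2 h^T (DX^*)^T P z|^2 \le 4c_2^2 M_C^2 M_h^2\,|z|^2 .
\]

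\emph{Step 4 (second piece).} Each entry satisfies
\[
|z^T DP[h_{\cdot\ell}] z| \le M_{DP}\,|h_{\cdot\ell}|\,|z|^2 .
\]
Summing over $\ell$ and using $\sum_\ell |h_{\cdot\ell}|^2 = \|h\|_F^2 \le M_h^2$,
\[
\sum_\ell \bigl(z^T DP[h_{\cdot\ell}] z\bigr)^2 \le M_{DP}^2 M_h^2\,|z|^4 .
\]
This is the only place the domain $\mathcal{D}_R$ is needed: there $|z|\le R$, so $|z|^4 \le R^2|z|^2$.

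\emph{Step 5 (convert to $V$).} Combining Steps 2--4,
\[
d\langle M_V\rangle/dt \le 2\sigma^2\bigl(4c_2^2M_C^2M_h^2 + M_{DP}^2M_h^2R^2\bigr)|z|^2 .
\]
Since $V = z^TPz \ge c_1|z|^2$, we have $|z|^2 \le V/c_1$. Substituting yields exactly $\eta^2 V$ with the stated $\eta^2$.

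The main obstacle is only bookkeeping in Step 4: one must interpret the norm of the derivative tensor $DP$ applied to the matrix $h$ consistently with the Frobenius convention for $|h|$, so that the factor is $M_h^2$ and not $d\,M_h^2$. If $M_{DP}$ is defined as the operator norm $\sup_{|u|=1}\|DP[u]\|$, the columnwise summation above gives precisely the Frobenius norm and no dimensional factor appears.
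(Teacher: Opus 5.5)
Your proposal is correct and follows the paper's argument: bound the two summands of the integrand in \eqref{eq:MV} by $2c_2M_CM_h|z|$ and $M_{DP}M_hR|z|$ on $\mathcal{D}_R$, combine them with $(a+b)^2\le 2a^2+2b^2$, and convert to $V$ via $V\ge c_1|z|^2$. Your version is slightly more careful than the paper's, since it keeps the factor $\sigma^2$ that the paper's intermediate display drops and makes the columnwise/Frobenius bookkeeping for $DP[h]$ explicit; one small correction is that Step~4 is not the only use of $\mathcal{D}_R$, because $y\in K$ is also needed there, as every constant $c_1,c_2,M_C,M_h,M_{DP}$ is a supremum over $K$.
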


\begin{proof}
From \eqref{eq:MV}, the martingale part is bounded by two terms. The first summand has modulus at most $2c_2 M_C M_h |z|$, and the second is bounded by $M_{DP} M_h R |z|$ on the event $\{|z| \le R\}$. Squaring the sum of these moduli and using the elementary inequality $(a+b)^2 \le 2a^2 + 2b^2$, we obtain
\[
\frac{d\langle M_V\rangle}{dt} \le 2\Bigl(4c_2^2M_C^2M_h^2 + M_{DP}^2M_h^2R^2\Bigr)|z|^2.
\]
Using $V \ge c_1|z|^2$ gives
\[
\frac{d\langle M_V\rangle}{dt} \le \frac{2}{c_1}\Bigl(4c_2^2M_C^2M_h^2 + M_{DP}^2M_h^2R^2\Bigr)V,
\]
which yields the stated value of $\eta^2$.
\end{proof}

\begin{lemma}[Exponential supermartingale]\label{lem:superm}
Set $\alpha = \gamma_V/\eta^2$. Define
$U(t) = e^{\alpha V(t)}$ and
$\widetilde{U}(t) = U(t) e^{-\alpha D_0 t}$.
Then $\widetilde{U}(t \wedge \tau)$ is a true supermartingale,
where $\tau = \tau_R \wedge \tau_K$ is defined in
\eqref{eq:tau}.
\end{lemma}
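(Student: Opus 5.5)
The plan is to apply It\^o's formula to $\widetilde U(t)=\exp\bigl(\alpha V(t)-\alpha D_0 t\bigr)$ on the stopped interval $[0,\tau]$. This gives
\[
d\widetilde U=\widetilde U\Bigl[\alpha\,\mathcal{L}V-\alpha D_0+\tfrac{\alpha^2}{2}\,\frac{d\langle M_V\rangle}{dt}\Bigr]dt+\alpha\widetilde U\,dM_V,
\]
where $\mathcal{L}V$ and $M_V$ are those of Lemma~\ref{lem:ito_V}.

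For $t<\tau$ we have $(z(t),y(t))\in\mathcal D_R$, so Lemma~\ref{lem:foster} gives $\mathcal{L}V\le-\gamma_V V+D_0$. Lemma~\ref{lem:QV} gives $d\langle M_V\rangle/dt\le\eta^2V$. Substituting both bounds, the drift bracket is at most
\[
\alpha\Bigl(-\gamma_V+\tfrac{\alpha\eta^2}{2}\Bigr)V=-\tfrac{\alpha\gamma_V}{2}V\le0,
\]
using $\alpha=\gamma_V/\eta^2$. The $D_0$ contribution cancels exactly against the factor $e^{-\alpha D_0t}$. Hence $\widetilde U(t\wedge\tau)$ equals $\widetilde U(0)$ plus a non-increasing finite-variation process plus the stochastic integral $\int_0^{t\wedge\tau}\alpha\widetilde U\,dM_V$. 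It is therefore a local supermartingale.

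The upgrade to a true supermartingale is the step needing care, and I would handle it by boundedness as in the proof of Lemma~\ref{lem:exit_K}. On $[0,\tau]$ we have $|z|\le R$ by continuity at $\tau_R$, so
\[
0\le\widetilde U(t\wedge\tau)\le e^{\alpha V}\le e^{\alpha c_2R^2}.
\]
A different route gives the same conclusion. The integrand $\alpha\widetilde U\,\sigma\bigl[-2z^TPDX^*h+z^TDP[h]z\bigr]$ is bounded on $\mathcal D_R$ by a deterministic constant, using $M_C,M_h,M_{DP},c_2$. So the stochastic integral is a genuine square-integrable martingale, and taking expectations in the It\^o decomposition gives $\EE[\widetilde U(t\wedge\tau)\mid\mathcal F_s]\le\widetilde U(s\wedge\tau)$ directly. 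Either argument works; I would present the bounded-integrand version because it avoids invoking Fatou or localising sequences.

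One subtlety to check is the degenerate case $\eta^2=0$ (e.g.\ $\sigma=0$), where $\alpha$ is undefined. I would note that $\sigma>0$ and $h_{\min}>0$ keep the term $M_h^2$ positive. I would also remark that if $M_C=0$ and $M_{DP}=0$ the noise does not enter $z$, and the claim is trivial with any $\alpha$. Finally, the Foster bound is used only for $t<\tau$, where $y\in K$ so that $P,DP,D^2P$ are controlled. The endpoint $t=\tau$ has Lebesgue measure zero and does not affect the drift integral.
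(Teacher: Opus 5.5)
Your proof is correct and follows the paper's argument. It applies It\^o's formula to $e^{\alpha V}$, then uses the Foster--Lyapunov and quadratic-variation bounds with $\alpha=\gamma_V/\eta^2$ to absorb half the decay; the discount $e^{-\alpha D_0 t}$ cancels $D_0$, and boundedness of $V$ on $[0,\tau]$ upgrades the local supermartingale to a true one. Your version is slightly more careful, since you state the drift bound as an inequality where the paper writes an equality, and you justify the upgrade explicitly via the bounded integrand.
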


\begin{proof}
By It\^o's formula applied to $U = e^{\alpha V}$:
$\mathcal{L}U = \alpha e^{\alpha V}[\mathcal{L}V + \tfrac{\alpha}{2}
d\langle M_V\rangle/dt] \le \alpha U[-\gamma_V V + D_0 + \tfrac{\alpha\eta^2}{2}V]
= \alpha U[-\tfrac{\gamma_V}{2}V + D_0]$, where we used
$\alpha = \gamma_V/\eta^2$. The time-discounted process
satisfies $d\widetilde{U} = \widetilde{U}[-\alpha \tfrac{\gamma_V}{2}V]dt
+ \alpha \widetilde{U} dM_V$, which has non-positive drift.
On $[0,\tau]$ both $|z| \le R$ and $y \in K$ hold, so all
Lyapunov constants are well-defined and the process $V$ is
bounded; hence $\widetilde{U}$ is a true supermartingale.
\end{proof}

\begin{lemma}[Maximal tail bound for $V$]\label{lem:tail_V}
For any $v > 0$ and $t > 0$:
\begin{equation}\label{eq:tail_V}
\PP\!\Bigl(\sup_{0\le s\le t\wedge\tau} V(s) \ge v\Bigr)
\;\le\; \exp\!\bigl(\alpha V(0) - \alpha v + \alpha D_0 t\bigr).
\end{equation}
\end{lemma}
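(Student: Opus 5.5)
The plan is to apply a maximal inequality for non-negative supermartingales to the discounted exponential process $\widetilde U$ from Lemma~\ref{lem:superm}. By that lemma, $\widetilde U(s\wedge\tau) = e^{\alpha V(s\wedge\tau)}e^{-\alpha D_0 (s\wedge\tau)}$ is a non-negative true supermartingale. Ville's (or Doob's) maximal inequality then gives, for any level $\lambda>0$,
\[
\PP\Bigl(\sup_{0\le s\le t}\widetilde U(s\wedge\tau)\ge \lambda\Bigr)\le \frac{\EE\,\widetilde U(0)}{\lambda}=\frac{e^{\alpha V(0)}}{\lambda}.
\]
Here $V(0)$ is deterministic, since the initial conditions are deterministic. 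If the initial data were random, we would instead carry $\EE e^{\alpha V(0)}$ through the argument.

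The key step is to compare the event $\{\sup_{s\le t\wedge\tau}V(s)\ge v\}$ with a level-crossing event for $\widetilde U$. On this event, path continuity gives a time $s\le t\wedge\tau$ with $V(s)\ge v$. At that time,
\[
\widetilde U(s)=e^{\alpha V(s)-\alpha D_0 s}\ge e^{\alpha v-\alpha D_0 t},
\]
because the discount factor satisfies $e^{-\alpha D_0 s}\ge e^{-\alpha D_0 t}$ for $s\le t$ (recall $D_0\ge0$). So the event is contained in
\[
\Bigl\{\sup_{s\le t}\widetilde U(s\wedge\tau)\ge e^{\alpha v-\alpha D_0 t}\Bigr\}.
\]
Choosing $\lambda=e^{\alpha v-\alpha D_0 t}$ then yields exactly \eqref{eq:tail_V}:
\[
\PP\Bigl(\sup_{0\le s\le t\wedge\tau} V(s) \ge v\Bigr)\le e^{\alpha V(0)}\,e^{-\alpha v+\alpha D_0 t}.
\]

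The main obstacle is making sure the maximal inequality is legitimately applicable, that is, that $\widetilde U(\cdot\wedge\tau)$ is a genuine càdlàg (here continuous) supermartingale on all of $[0,t]$ and not only a local one. Lemma~\ref{lem:superm} asserts this, and the justification is boundedness: on $[0,\tau]$ we have $|z|\le R$, so $V\le c_2R^2$ and $\widetilde U\le e^{\alpha c_2R^2}$. A bounded local supermartingale is a true supermartingale by dominated convergence along a localizing sequence.

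One minor point remains. The drift computation in Lemma~\ref{lem:superm} used the bound $\mathcal LU\le \alpha U[-\gamma_V V/2+D_0]$, after which the discount $e^{-\alpha D_0 t}$ must cancel the $+\alpha D_0 U$ term. I would double-check this, since the stated drift of $\widetilde U$ already reflects that cancellation. If instead one only had $\mathcal LU\le\alpha U D_0$, the same argument still works verbatim. Finally, the bound is trivial when its right-hand side is at least $1$, so no restriction on $v$ is needed.
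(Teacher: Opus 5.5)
Your proposal is correct and follows the paper's proof essentially step for step. The paper also applies Doob's maximal inequality to the non-negative supermartingale $\widetilde U(\cdot\wedge\tau)$ from Lemma~\ref{lem:superm}, uses the inclusion $\{\sup_{s\le t\wedge\tau} V(s)\ge v\}\subset\{\sup_{s\le t\wedge\tau}\widetilde U(s)\ge e^{\alpha v-\alpha D_0 t}\}$ obtained from $e^{-\alpha D_0 s}\ge e^{-\alpha D_0 t}$, and chooses exactly that level. Your extra justifications (boundedness of $\widetilde U$ on $[0,\tau]$ for the true supermartingale property, and $D_0\ge 0$) only make explicit what the paper uses implicitly.
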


\begin{proof}
By Doob's maximal inequality applied to the non-negative supermartingale $\widetilde{U}$,
\[
\PP\!\Bigl(\sup_{0\le s\le t\wedge\tau} \widetilde{U}(s) \ge e^\theta\Bigr)
\le e^{-\theta}\widetilde{U}(0) = e^{-\theta}e^{\alpha V(0)}.
\]
The event $\{\sup_{0\le s\le t\wedge\tau} V(s) \ge v\}$ implies that there exists 
$s_* \le t\wedge\tau$ such that $V(s_*) \ge v$. Since $s_* \le t$, we have
\[
\widetilde{U}(s_*) = e^{\alpha V(s_*)} e^{-\alpha D_0 s_*} \ge e^{\alpha v} e^{-\alpha D_0 t},
\]
which in turn implies that $\sup_{0\le s\le t\wedge\tau} \widetilde{U}(s) \ge e^{\alpha v - \alpha D_0 t}$.
Choosing $\theta = \alpha v - \alpha D_0 t$ yields \eqref{eq:tail_V}.
\end{proof}

\begin{proposition}[Path concentration in $|z|$]\label{prop:path_conc}
For $r > 0$ and $t > 0$,
\begin{equation}\label{eq:pathz}
\PP\!\Bigl(\sup_{0\le s\le t\wedge\tau} |z(s)| \ge r\Bigr)
\;\le\; \exp\!\bigl(\alpha c_2|z_0|^2 - \alpha c_1 r^2 + \alpha D_0 t\bigr).
\end{equation}
\end{proposition}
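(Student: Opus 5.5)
This is a direct corollary of Lemma~\ref{lem:tail_V}: we convert the maximal tail bound for $V$ into one for $|z|$ using the two-sided bounds \eqref{eq:Pbounds} on $P(y)$. These bounds hold whenever $y\in K$, which is guaranteed on $[0,\tau]$ because $\tau\le\tau_K$.

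\emph{Step 1: comparison of events.} For every $s\le t\wedge\tau$ we have $y(s)\in K$, so $V(s)=z(s)^TP(y(s))z(s)\ge c_1|z(s)|^2$. If $\sup_{0\le s\le t\wedge\tau}|z(s)|\ge r$, then by path continuity there is some $s_*\le t\wedge\tau$ with $|z(s_*)|\ge r$. At that time $V(s_*)\ge c_1 r^2$. Hence
\[
\Bigl\{\sup_{0\le s\le t\wedge\tau}|z(s)|\ge r\Bigr\}\subseteq\Bigl\{\sup_{0\le s\le t\wedge\tau}V(s)\ge c_1r^2\Bigr\}.
\]

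\emph{Step 2: apply Lemma~\ref{lem:tail_V}.} Take $v=c_1r^2$ in \eqref{eq:tail_V}. This gives the bound $\exp(\alpha V(0)-\alpha c_1r^2+\alpha D_0t)$.

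\emph{Step 3: bound the initial value.} Since $y_0\in K_0\subset K$, we have $V(0)=z_0^TP(y_0)z_0\le c_2|z_0|^2$. Because $\alpha=\gamma_V/\eta^2>0$, the exponential is monotone in $V(0)$, so we may replace $V(0)$ by $c_2|z_0|^2$. This yields \eqref{eq:pathz}.

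There is no genuine obstacle here. The only points to check are that $\alpha>0$, which holds since $\tilde\lambda>0$ by Lemma~\ref{lem:foster} and $\eta^2>0$ for $\sigma>0$, and that the bound $V\ge c_1|z|^2$ is used only at times up to $\tau_K$, where $y\in K$. For $s_*=\tau$ one uses continuity, since $y(\tau_K)\in K$ because $K$ is closed. If the right-hand side of \eqref{eq:pathz} exceeds $1$, the inequality holds trivially.
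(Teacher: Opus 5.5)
Your proposal is correct and matches the paper's proof: you use $V\ge c_1|z|^2$ to get $\{\sup|z|\ge r\}\subseteq\{\sup V\ge c_1r^2\}$, substitute $v=c_1r^2$ into Lemma~\ref{lem:tail_V}, and bound $V(0)\le c_2|z_0|^2$. Your extra checks ($\alpha>0$, and $y\in K$ up to $\tau$ because $K$ is closed) make explicit points the paper leaves implicit.
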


\begin{proof}
Since $|z|^2 \ge V/c_2$, we have $V(0) \le c_2|z_0|^2$;
since $V \ge c_1|z|^2$, the event $\{|z(s)| \ge r\}$ implies $\{V(s) \ge c_1 r^2\}$.
Substitute $v = c_1 r^2$ into \eqref{eq:tail_V}.
\end{proof}

To use Proposition~\ref{prop:path_conc} uniformly on $[0, T/\varepsilon]$,
set $t = T/\varepsilon$ and $r^2 = 2D_0 T/(c_1 \varepsilon)$.
The exponent becomes
$-\alpha D_0 T/\varepsilon + \alpha c_2|z_0|^2$.

We now extract the conditions under which this exponent
diverges as $\varepsilon \to 0$, and derive $\sigma_c$.

\begin{lemma}[Conditions for exponential decay]\label{lem:concentration}
Define $r_*^2 = 2D_0 T/(c_1\varepsilon)$. The bound
\[
\PP\!\Bigl(\sup_{s \le T/\varepsilon \wedge \tau} |z(s)| \ge r_*\Bigr)
\le \exp(-\kappa/\varepsilon)
\]
holds with $\kappa > 0$ independent of $\varepsilon$ provided
$\sigma \le \sigma_c(\varepsilon)$ as defined in \eqref{eq:sigmac}.
\end{lemma}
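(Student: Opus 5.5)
The plan is to apply Proposition~\ref{prop:path_conc} with $t = T/\varepsilon$ and $r = r_*$, where $r_*^2 = 2D_0T/(c_1\varepsilon)$. This choice makes $\alpha c_1 r_*^2 = 2\alpha D_0 T/\varepsilon$, so the exponent in \eqref{eq:pathz} becomes
\[
\alpha c_2|z_0|^2 - \alpha D_0 T/\varepsilon .
\]
To produce a bound of the form $e^{-\kappa/\varepsilon}$ we need two things. First, $\alpha D_0 T/\varepsilon$ must be at least a constant times $1/\varepsilon$. Second, the initial contribution $\alpha c_2|z_0|^2$ must be negligible compared with it.

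\textbf{Scaling of $\alpha$.} We have $\alpha = \gamma_V/\eta^2$ with $\gamma_V = \tilde\lambda/(2c_2)$, and by Lemma~\ref{lem:QV}, $\eta^2 = O(\sigma^2)$ with constants depending on $c_1,c_2,M_C,M_h,M_{DP},R$. Remark~\ref{rem:lambda_pos} gives $\tilde\lambda \ge 1/2$ for $\varepsilon,\sigma$ small. Hence $\alpha \asymp \sigma^{-2}$, with explicit two-sided constants.

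\textbf{Scaling of $D_0$.} From \eqref{eq:D0}, $D_0$ is bounded below by $\sigma^2 c_2 M_C^2 M_h^2$ and above by
\[
\sigma^2 c_2M_C^2M_h^2 + \frac{8}{\tilde\lambda}\Bigl(c_2^2\varepsilon^2M_C^2M_g^2 + \tfrac{c_2^2}{4}\sigma^4M_{C_2}^2M_h^4 + \sigma^4M_{DP}^2M_C^2M_h^4\Bigr).
\]
Consequently $\alpha D_0 \ge c\,M_C^2M_h^2/\eta^2\cdot\sigma^2$, a positive constant $\kappa_0$ independent of $\varepsilon,\sigma$. (Here $M_C>0$ is required; otherwise the martingale term degenerates and the bound is trivial.) This gives $\alpha D_0T/\varepsilon \ge \kappa_0 T/\varepsilon$. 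The condition $\sigma\le\sigma_c$ is not needed for this lower bound. Its role is to keep $r_*$ of the advertised size and inside the stopping radius $R$. Using $\sigma^2\le C_0^2\varepsilon$ and $\sigma^4 M_{C_2}^2M_h^4 \le C_0^4\varepsilon L_{\rm geom}^4M_{C_2}^2M_h^4/\lambda_0^2 = C_0^4\varepsilon$ (curvature branch), each term of $D_0$ is $O(\varepsilon)$. Therefore $D_0/\varepsilon$ is bounded, $r_*^2 = 2D_0T/(c_1\varepsilon) = O(1)$, and choosing $C_0$ small makes $r_* \le R$ so that \eqref{eq:Rcond} is compatible. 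This is precisely where the two branches of $\sigma_c$ enter.

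\textbf{Initial term.} Since $|z_0| = O(\varepsilon)$, we get $\alpha c_2|z_0|^2 = O(\varepsilon^2/\sigma^2)$. The hard part is controlling this when $\sigma$ is very small. I would compare it with $\alpha D_0T/\varepsilon \gtrsim \sigma^{-2}\cdot(\sigma^2 + \varepsilon^2)/\varepsilon$, using the $\varepsilon^2$ piece of $D_0$ coming from $\beta$. This yields $\alpha D_0T/\varepsilon \gtrsim \varepsilon/\sigma^2 + 1/\varepsilon$, which dominates $\varepsilon^2/\sigma^2$ for $\varepsilon$ small. Absorbing half of the main term, we take $\kappa = \tfrac12\kappa_0 T$ (possibly decreased). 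This gives
\[
\PP\Bigl(\sup_{s\le T/\varepsilon\wedge\tau}|z(s)|\ge r_*\Bigr) \le e^{-\kappa/\varepsilon}.
\]
Finally, since $\sqrt{D_0/\varepsilon}$ rather than $D_0$ appears in $r_*$, I would reconcile $r_*$ with the form $r_*(\varepsilon,\sigma)$ of Theorem~\ref{thm:main} by the elementary bound $\sqrt{a+b+c}\le\sqrt a+\sqrt b+\sqrt c$, adjusting $C$. I expect this bookkeeping to be the main obstacle, together with keeping $\kappa$ uniform in $\sigma\to0$.
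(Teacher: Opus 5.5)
Your proposal is correct and follows the paper's argument. You apply Proposition~\ref{prop:path_conc} with $t=T/\varepsilon$ and $r=r_*$, obtain $\alpha D_0\ge C_\kappa$ from the diffusion part $\sigma^2c_2M_C^2M_h^2$ of $D_0$ (where the $\sigma^2$ cancels against $\eta^2$), and use $\sigma\le\sigma_c$ only to keep $D_0/\varepsilon$ bounded so that $r_*\le R$. Your handling of the initial term is more careful than the paper's: the paper writes $\alpha c_2|z_0|^2=O(\varepsilon^2/\sigma_c^2)$ even though $\alpha\propto\sigma^{-2}$ is unbounded as $\sigma\to0$ (and $\sigma_c\le C_0\sqrt{\varepsilon}$, not $\ge$), whereas comparing with the $\tfrac{4}{\tilde\lambda}c_2^2\varepsilon^2M_C^2M_g^2$ piece of $D_0$, as you do, gives a ratio of order $\varepsilon$ independent of $\sigma$ and closes that step uniformly.
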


\begin{proof}
The exponent is $-\alpha D_0 T/\varepsilon + \alpha c_2|z_0|^2$.
We show $\alpha D_0 \ge C_\kappa > 0$ independent of $\varepsilon, \sigma$.
Expanding $D_0$ from \eqref{eq:D0}:

Since both \(D_0\) and \(\eta^2\) are proportional to
\(\sigma^2\) at leading order, the factor \(\sigma^2\)
cancels in the ratio \(\alpha D_0
=(\gamma_V/\eta^2)D_0\).
More precisely, using the diffusion contribution
\[
D_0 \ge \sigma^2 c_2 M_C^2 M_h^2
\]
and
\[
\eta^2
\le
\frac{2\sigma^2}{c_1}
\Bigl(
4c_2^2M_C^2M_h^2
+
M_{DP}^2M_h^2R^2
\Bigr),
\]
we obtain
\[
\alpha D_0
=
\frac{\gamma_V}{\eta^2}D_0
\ge
\frac{\tilde\lambda c_1 M_C^2M_h^2}
{4E}
=:C_\kappa,
\]
where
\[
E=
\frac{2c_2}{c_1}
\Bigl(
2c_2M_CM_h
+
M_{DP}M_hR
\Bigr)^2.
\]

This is positive and independent of $\sigma, \varepsilon$. The initial condition
term $\alpha c_2|z_0|^2 = O(\varepsilon^2/\sigma_c^2) = O(\varepsilon) \to 0$
under $|z_0| = O(\varepsilon)$ and $\sigma_c \ge C_0\sqrt{\varepsilon}$. Hence \(C_\kappa>0\) depends only on system parameters and is independent of \(\varepsilon\) and \(\sigma\).

The self-consistency $r_* \le R$ requires $D_0/\varepsilon$ bounded.
Expanding $D_0$: the classical term $\sigma^2 c_2 M_C^2 M_h^2 / \varepsilon$
is $O(1)$ iff $\sigma = O(\sqrt{\varepsilon})$. The curvature term
$\sigma^4 M_{C_2}^2 M_h^4/(\varepsilon \tilde\lambda)$ is $O(1)$ iff
$\sigma^2 = O(\sqrt{\varepsilon}/(M_{C_2}M_h^2))$. These two
conditions yield the two branches of $\sigma_c$ in
\eqref{eq:sigmac}.
\end{proof}

\begin{lemma}[Pathwise time-integral bound on $V$]\label{lem:timeavg}
Define
\[
L := \frac{3\bigl(V(0) + D_0 T/\varepsilon\bigr)}{\gamma_V},
\qquad
\tau_L := \inf\!\Bigl\{t \ge 0:\textstyle\int_0^t V(s)\,ds > L\Bigr\}
\wedge \frac{T}{\varepsilon},
\]
and the event
\[
\mathcal{H} := \biggl\{\int_0^{T/\varepsilon} V(s)\,ds \le L\biggr\}.
\]
Under Assumptions~\ref{ass:NH}--\ref{ass:stop} and
$\sigma \le \sigma_c(\varepsilon)$:
\begin{enumerate}[label=(\alph*)]
\item $\PP(\mathcal{H}^c \cap \{\tau > T/\varepsilon\})
      \le 2\exp(-\kappa'/\varepsilon)$, where
      \[
      \kappa' := \frac{2\gamma_V D_0 T}{3\eta^2}\,(1+o(1))
      \;\ge\; \frac{2C_\kappa T}{3}\,(1+o(1)) \;>\; 0
      \]
      is independent of $\varepsilon$ and $\sigma$, with
      $C_\kappa$ from Lemma~\ref{lem:concentration}.
      Consequently $\PP(\mathcal{H}^c)
      \le 2\exp(-\kappa'/\varepsilon) + \PP(\tau \le T/\varepsilon)$,
      which is absorbable into the existing $e^{-\kappa/\varepsilon}
      + e^{-C_K}$ bound of Theorem~\ref{thm:main}(b) by replacing
      $\kappa$ with $\min(\kappa,\kappa')$.
\item On $\mathcal{G} \cap \mathcal{H}$
      (where $\mathcal{G} = \{|z(s)| \le r_*,\; y(s) \in K,\;
      \forall s \le T/\varepsilon\}$ is the path-concentration event),
      $\int_0^{T/\varepsilon} V(s)\,ds \le L$ holds by definition;
      since $V(0) = O(\varepsilon^2)$, the dominant contribution
      is $\frac{3D_0 T}{\gamma_V \varepsilon}$, which is $O(1)$
      under the self-consistency of Lemma~\ref{lem:concentration}.
\end{enumerate}
\end{lemma}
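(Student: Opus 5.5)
The plan is to integrate the Itô expansion of Lemma~\ref{lem:ito_V} against the Foster--Lyapunov bound of Lemma~\ref{lem:foster}, and then control the martingale term with an exponential inequality that uses the quadratic variation bound of Lemma~\ref{lem:QV}. For part~(a), work first with the stopped process on $[0,\tau]$, $\tau=\tau_R\wedge\tau_K$. Integrating $dV=\mathcal LV\,dt+dM_V$ and using $\mathcal LV\le-\gamma_V V+D_0$ gives, for $t\le \tau$,
\[
\gamma_V\int_0^{t} V(s)\,ds \;\le\; V(0)+D_0 t + M_V(t) - V(t) \;\le\; V(0)+D_0T/\varepsilon + M_V(t).
\]
Suppose $\int_0^{t}V\,ds$ first exceeds $L=3(V(0)+D_0T/\varepsilon)/\gamma_V$ before $T/\varepsilon$, at time $t=\tau_L\wedge\tau$. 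Then the display forces
\[
M_V(\tau_L)\;\ge\; 2\bigl(V(0)+D_0T/\varepsilon\bigr)=\tfrac23\gamma_V L .
\]

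To bound the probability of this, use the exponential martingale $\exp(\mu M_V(t)-\tfrac{\mu^2}{2}\langle M_V\rangle_t)$, stopped at $\tau_L\wedge\tau$. On $[0,\tau]$ the integrand of $M_V$ is bounded, since $|z|\le R$ and $y\in K$, so Novikov's condition holds and this is a true martingale. Lemma~\ref{lem:QV} gives $\langle M_V\rangle_{\tau_L\wedge\tau}\le \eta^2\int_0^{\tau_L}V\,ds\le \eta^2 L$. The event $\{M_V\ge a\}$ with $a=\tfrac23\gamma_V L$ and $\langle M_V\rangle\le \eta^2L$ then has probability at most $\exp(-\mu a+\tfrac{\mu^2}{2}\eta^2L)$. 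Optimising in $\mu$, i.e. $\mu=a/(\eta^2L)$, gives $\exp\bigl(-a^2/(2\eta^2L)\bigr)=\exp\bigl(-\tfrac{2\gamma_V^2L}{9\eta^2}\bigr)$. Substituting $L$ gives $\gamma_V L\approx 3D_0T/\varepsilon$, so the exponent is $-\tfrac{2\gamma_V D_0 T}{3\eta^2\varepsilon}(1+o(1))$, where the $o(1)$ absorbs $V(0)=O(\varepsilon^2)$. This is exactly $\kappa'/\varepsilon$. By the computation in Lemma~\ref{lem:concentration}, $\alpha D_0=\gamma_VD_0/\eta^2\ge C_\kappa$, which yields $\kappa'\ge \tfrac23C_\kappa T(1+o(1))$, uniformly in $\sigma$ because the $\sigma^2$ factors cancel. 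The factor $2$ in the stated bound comes from allowing a two-sided (Bernstein-type) version of the inequality, or from a union over the two ways the integral can cross $L$ (at $\tau_L$ or at $T/\varepsilon$); it costs nothing.

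On $\{\tau>T/\varepsilon\}$ the stopped and unstopped processes agree on $[0,T/\varepsilon]$, so $\mathcal H^c\cap\{\tau>T/\varepsilon\}$ is contained in the event just bounded. The decomposition $\PP(\mathcal H^c)\le 2e^{-\kappa'/\varepsilon}+\PP(\tau\le T/\varepsilon)$ is then immediate. Lemma~\ref{lem:exit_K} and Lemma~\ref{lem:concentration} together bound $\PP(\tau\le T/\varepsilon)$ by $e^{-\kappa/\varepsilon}+e^{-C_K}$, which gives the absorption into the constant $\min(\kappa,\kappa')$.

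Part~(b) is definitional: on $\mathcal H$ the integral is at most $L$. For the size of $L$, note that $V(0)\le c_2|z_0|^2=O(\varepsilon^2)$, so $L=\tfrac{3D_0T}{\gamma_V\varepsilon}+O(\varepsilon^2)$. Under $\sigma\le\sigma_c$, Lemma~\ref{lem:concentration} shows that $D_0/\varepsilon=O(1)$ on both branches, hence $L=O(1)$.

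The main obstacle is rigor in the exponential martingale step: the bound $\langle M_V\rangle\le\eta^2\int V$ is only valid inside $\mathcal D_R$, so every object must be stopped at $\tau$. I would handle this by working throughout with $t\wedge\tau\wedge\tau_L$ and only removing the stopping on $\{\tau>T/\varepsilon\}$.
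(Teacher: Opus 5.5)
Your proposal is correct and follows essentially the same route as the paper: you integrate the Foster--Lyapunov bound up to $\tau_L$, deduce $M_V(\tau_L)\ge 2(V(0)+D_0T/\varepsilon)$ on $\{\int_0^{T/\varepsilon}V>L\}\cap\{\tau>T/\varepsilon\}$, and control this event with the exponential martingale $\exp(\mu M_V-\tfrac{\mu^2}{2}\langle M_V\rangle)$ using $\langle M_V\rangle_{\tau_L}\le\eta^2L$, optimising $\mu$ to obtain $\exp\bigl(-\tfrac{2\gamma_V}{3\eta^2}(V(0)+D_0T/\varepsilon)\bigr)$. The remaining differences are cosmetic: you apply Markov at the single time $\tau_L$ where the paper applies Doob to the supremum, and you invoke Novikov where the paper uses the fact that a non-negative local martingale is a supermartingale; also, the one-sided bound already suffices, so the factor $2$ needs no separate justification.
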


\begin{proof}
(a) Work on $[0,\tau]$ with $\tau = \tau_R \wedge \tau_K$, so that
Lemmas~\ref{lem:foster} and~\ref{lem:QV} apply throughout.

\medskip\noindent\textbf{Step 1: Integral inequality.}
Integrate the Foster--Lyapunov inequality
$dV \le -\gamma_V V\,dt + D_0\,dt + dM_V$ from $0$ to $\tau_L$. On
$\{\tau > T/\varepsilon\}$ we have $\tau_L \le T/\varepsilon < \tau$,
so $\tau_L \wedge \tau = \tau_L$, and on $[0,\tau_L]$ the Lyapunov
estimates hold:
\[
V(\tau_L) - V(0)
\le -\gamma_V\int_0^{\tau_L}V(s)\,ds
   + D_0\tau_L + M_V(\tau_L).
\]
Using $V(\tau_L) \ge 0$ and $\tau_L \le T/\varepsilon$:
\[
\gamma_V\int_0^{\tau_L}V(s)\,ds
\le V(0) + \frac{D_0 T}{\varepsilon} + M_V(\tau_L).
\]

\medskip\noindent\textbf{Step 2: Key implication.}
If $\int_0^{T/\varepsilon}V > L$, then by path continuity
$\tau_L < T/\varepsilon$ and $\int_0^{\tau_L}V = L$. Substituting:
\[
\gamma_V L \le V(0) + \frac{D_0 T}{\varepsilon} + M_V(\tau_L).
\]
By $L = 3(V(0)+D_0T/\varepsilon)/\gamma_V$ the left side equals
$3(V(0)+D_0T/\varepsilon)$, hence
$M_V(\tau_L) \ge 2(V(0)+D_0T/\varepsilon) =: x$, and
\[
\bigl\{\textstyle\int_0^{T/\varepsilon}V > L\bigr\}
\cap \{\tau > T/\varepsilon\}
\;\subset\; \bigl\{\sup_{t\le\tau_L}M_V(t) \ge x\bigr\}.
\]

\medskip\noindent\textbf{Step 3: Exponential martingale.}
For $\theta>0$ let
\[
\mathcal E_\theta(t)
=
\exp\!\Bigl(
\theta M_V(t)
-\frac{\theta^2}{2}\langle M_V\rangle_t
\Bigr).
\]
Since $\mathcal E_\theta$ is a non-negative continuous local martingale,
the stopped process $\mathcal E_\theta(\cdot\wedge\tau_L)$ is a true supermartingale.

On $[0,\tau_L]$,
\[
\langle M_V\rangle_t
\le
\eta^2L,
\]
so
\[
\mathcal E_\theta(t)
\ge
\exp\!\Bigl(
\theta M_V(t)
-\frac{\theta^2}{2}\eta^2L
\Bigr).
\]

Hence
\[
\Bigl\{
\sup_{t\le\tau_L}M_V(t)\ge x
\Bigr\}
\subset
\Bigl\{
\sup_{t\le\tau_L}\mathcal E_\theta(t)
\ge
e^{\theta x-\frac{\theta^2}{2}\eta^2L}
\Bigr\},
\]
which implies
\[
\PP\Bigl(
\sup_{t\le\tau_L}M_V(t)\ge x
\Bigr)
\le
\PP\Bigl(
\sup_{t\le\tau_L}\mathcal E_\theta(t)
\ge
e^{\theta x-\frac{\theta^2}{2}\eta^2L}
\Bigr).
\]

Applying Doob's maximal inequality gives
\[
\PP\Bigl(
\sup_{t\le\tau_L}M_V(t)\ge x
\Bigr)
\le
\exp\!\Bigl(
-\theta x
+
\frac{\theta^2}{2}\eta^2L
\Bigr).
\]

Optimising at
\[
\theta=\frac{x}{\eta^2L}
\]
yields
\[
\PP\Bigl(
\sup_{t\le\tau_L}M_V(t)\ge x
\Bigr)
\le
\exp\!\Bigl(
-\frac{x^2}{2\eta^2L}
\Bigr).
\]

By symmetry, the same bound holds for $-M_V$. The union bound then yields
\[
\PP\Bigl(
\sup_{t\le\tau_L}|M_V(t)|\ge x
\Bigr)
\le
2\exp\!\Bigl(
-\frac{x^2}{2\eta^2L}
\Bigr).
\]

\medskip\noindent\textbf{Step 4: Exponent in $\varepsilon$.}
Substituting $x = 2(V(0)+D_0T/\varepsilon)$ and
$L = 3(V(0)+D_0T/\varepsilon)/\gamma_V$:
\[
\frac{x^2}{2\eta^2 L}
= \frac{4(V(0)+D_0T/\varepsilon)^2}
       {2\eta^2 \cdot 3(V(0)+D_0T/\varepsilon)/\gamma_V}
= \frac{2\gamma_V}{3\eta^2}\Bigl(V(0)+\frac{D_0 T}{\varepsilon}\Bigr).
\]
Since $V(0) = c_2|z_0|^2 = O(\varepsilon^2)$ is negligible,
\[
\frac{x^2}{2\eta^2 L}
= \frac{2\gamma_V D_0 T}{3\eta^2\varepsilon}\,(1+o(1))
\ge \frac{2C_\kappa T}{3\varepsilon}\,(1+o(1))
=:\frac{\kappa'}{\varepsilon},
\]
where we used $\alpha D_0 = (\gamma_V/\eta^2)D_0 \ge C_\kappa$ from
Lemma~\ref{lem:concentration}. Thus
$\kappa' = \frac{2C_\kappa T}{3}(1+o(1)) > 0$ is independent of
$\varepsilon$ and $\sigma$, and
$\PP(\mathcal{H}^c\cap\{\tau>T/\varepsilon\})
\le 2e^{-\kappa'/\varepsilon}$.
Adding $\PP(\tau \le T/\varepsilon) \le e^{-\kappa/\varepsilon}
+ e^{-C_K}$ shows that $\PP(\mathcal{H}^c)$ is absorbed into
the existing failure-probability structure of
Theorem~\ref{thm:main}(b).

(b) Immediate from the definition of $\mathcal{H}$.
\end{proof}

\subsection{Slow-variable adiabatic error}
\label{sec:slow}

Let $\Delta y(t) = y(t) - y_{\mathrm{ad}}(t)$ and
$F(y) = g(X^*(y),y)$.

\begin{lemma}[Decomposition of $\Delta y$]
On the path-concentration event
$\{|z(s)| \le r_*,\; \forall s \le T/\varepsilon\}$
(which holds with probability $\ge 1 - e^{-\kappa/\varepsilon}$):
\[
|\Delta y(t)| \le |\Delta y_{\mathrm{sys}}(t)|
+ |\Delta y_{\mathrm{fast}}(t)| + |\Delta y_{\mathrm{mart}}(t)|
+ \varepsilon M_{DF}\!\int_0^t |\Delta y(s)|\,ds
+ |\mathrm{Rem}(t)|,
\]
where:
\begin{itemize}
\item $\Delta y_{\mathrm{sys}}(t) = \varepsilon\int_0^t D_xg(X^*,y)\cdot\mu_z(y(s))\,ds$,
      with $\mu_z(y) = -A(y)^{-1}b(y)$;
\item $\Delta y_{\mathrm{fast}}(t) = \varepsilon\int_0^t D_xg(X^*,y)\cdot\tilde{z}(s)\,ds$,
      with $\tilde{z} = z - \mu_z$;
\item $\Delta y_{\mathrm{mart}}(t) = \sigma\int_0^t (h(y) - h(y_{\mathrm{ad}}))\,dW_s$.
\end{itemize}
\end{lemma}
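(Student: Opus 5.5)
Subtract the two slow equations directly. Writing $x = X^*(y)+z$, we have
\[
d\Delta y = \varepsilon\bigl[g(X^*(y)+z,y) - F(y_{\mathrm{ad}})\bigr]dt
+ \sigma\bigl(h(y)-h(y_{\mathrm{ad}})\bigr)dW_t, \qquad \Delta y(0)=0.
\]
The stochastic integral of the diffusion difference is exactly $\Delta y_{\mathrm{mart}}(t)$, so it needs no further treatment. The drift difference we split as
\[
g(X^*(y)+z,y) - F(y_{\mathrm{ad}})
= \bigl[F(y)-F(y_{\mathrm{ad}})\bigr] + D_xg(X^*(y),y)\,z + R_g(z,y),
\]
using the expansion $g(X^*+z,y) = g_0 + D_xg\cdot z + R_g$ from the proof of Lemma~\ref{lem:expansion}.

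For the first bracket, apply the mean value theorem along the segment joining $y_{\mathrm{ad}}$ to $y$. This gives $|F(y)-F(y_{\mathrm{ad}})| \le M_{DF}|\Delta y|$. Integrating produces the Gronwall-type term $\varepsilon M_{DF}\int_0^t|\Delta y(s)|\,ds$. The bound needs $\sup\|DF\|\le M_{DF}$ on the segment; we secure it by working on $\{\tau_K>T/\varepsilon\}$ and using the smooth extension of $X^*$ (hence of $F$) to $\RR^m$ from Section~\ref{sec:setup}. The segment may leave $K$, so we would need $M_{DF}$ to control the extension on a neighbourhood of $K$. We absorb this by enlarging the constant; this is where care is needed.

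For the linear term, insert $z = \mu_z(y) + \tilde z$ with $\mu_z = -A^{-1}b$. This gives exactly $\Delta y_{\mathrm{sys}} + \Delta y_{\mathrm{fast}}$ after multiplying by $\varepsilon$ and integrating. Invertibility of $A(y)$ follows from Assumption~\ref{ass:NH}, since all eigenvalues have real part at most $-\lambda_0<0$, with $\|A^{-1}\|\le K_A/\lambda_0$ from $A^{-1}=-\int_0^\infty e^{As}ds$. Hence $\mu_z$ is well defined and bounded on $K$.

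The remainder is $\mathrm{Rem}(t) := \varepsilon\int_0^t R_g(z(s),y(s))\,ds$. On the concentration event we have $|z|\le r_*\le R\le R_0$ and $y\in K$, so Assumption~\ref{ass:stop} gives $|\mathrm{Rem}(t)|\le \varepsilon C_g r_*^2 t$. We do not need to bound it further here, since it appears explicitly in the statement. Finally, the triangle inequality applied to the integrated identity yields the claimed inequality. The probability $\ge 1-e^{-\kappa/\varepsilon}$ is inherited from Lemma~\ref{lem:concentration}, intersected with $\{\tau_K>T/\varepsilon\}$ as implicitly required for the constants to apply.

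The main obstacle is the Lipschitz step for $F$ when $y_{\mathrm{ad}}$ may leave $K$. I would handle it by either stopping at the exit time of $y_{\mathrm{ad}}$ from a neighbourhood of $K$, or invoking a global bound on $DF$ for the chosen extension.
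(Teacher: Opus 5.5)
Your argument is correct and is the computation the paper leaves implicit; the paper states this lemma without proof. You subtract the two slow SDEs, expand $g$ about $X^*(y)$, split $z=\mu_z+\tilde z$, and bound $F(y)-F(y_{\mathrm{ad}})$ by a mean-value Lipschitz estimate, which is slightly cleaner than the linearisation of $F$ that the $\delta^2$ remainder in bootstrap item (d) suggests the paper intended. For the step you flagged, localise in $\Delta y$ rather than in $y_{\mathrm{ad}}$: the segment from $y_{\mathrm{ad}}(s)$ to $y(s)$ lies in the closed ball of radius $|\Delta y(s)|$ about $y(s)\in K$, so before $\tau_\delta\wedge\tau$ (the localisation the paper uses anyway in the adiabatic theorem) it stays in the $\delta$-neighbourhood of $K$ without any convexity of $K$, and $M_{DF}$ need only bound $\|DF\|$ there; also, once you intersect with $\{\tau_K>T/\varepsilon\}$, the correct probability is $1-e^{-\kappa/\varepsilon}-e^{-C_K}$, not $1-e^{-\kappa/\varepsilon}$.
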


\begin{lemma}[Bound on systematic term]\label{lem:sys}
\[
\sup_{t \le T/\varepsilon}|\Delta y_{\mathrm{sys}}(t)|
\le
\frac{T K_A M_{D_xg}}{\lambda_0}
\left(
\varepsilon M_C M_g
+
\frac{\sigma^2}{2}M_{C_2}M_h^2
\right).
\]
\end{lemma}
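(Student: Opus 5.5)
The proof is a direct pointwise estimate of the integrand followed by integration over the slow time horizon; no probabilistic input is needed. The target bound $\frac{TK_AM_{D_xg}}{\lambda_0}\beta$, with $\beta=\varepsilon M_CM_g+\tfrac{\sigma^2}{2}M_{C_2}M_h^2$ the drift bound of Lemma~\ref{lem:expansion}, suggests the chain
\[
|\Delta y_{\mathrm{sys}}(t)| \le \varepsilon\int_0^t \|D_xg\|\,|A(y(s))^{-1}b(y(s))|\,ds \le \varepsilon\, t\, M_{D_xg}\,\sup_{y\in K}\|A(y)^{-1}\|\,\beta .
\]
Taking $t\le T/\varepsilon$ cancels the prefactor $\varepsilon$ and leaves $T M_{D_xg}\sup_K\|A^{-1}\|\,\beta$. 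The argument should be carried out on the stopped domain $[0,\tau]$, or on the event $\{\tau_K>T/\varepsilon\}$, so that $y(s)\in K$ and all the suprema of Definition~\ref{def:constants} apply. On the same domain $|z|\le R\le R_0$, so $(X^*(y)+z,y)$ stays in the neighbourhood $U$ where $M_{D_xg}$ is defined. The integrand in $\Delta y_{\mathrm{sys}}$ is evaluated at $(X^*(y),y)$, so this is immediate.

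The first step is to control $b$. By Lemma~\ref{lem:expansion}, $|b_\varepsilon(y)|\le\varepsilon M_CM_g$ uses $|g_0(y)|\le M_g$, which holds because $(X^*(y),y)\in U$. The bound $|b_\sigma(y)|\le\tfrac{\sigma^2}{2}M_{C_2}M_h^2$ follows from the norm convention $|D^2X^*[h\otimes h]|\le\|D^2X^*\|\,|h|^2$ recorded in Section~\ref{sec:setup}. Together these give $|b|\le\beta$ on $K$.

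The second step, and the only one needing a genuine argument, is the uniform resolvent bound $\|A(y)^{-1}\|\le K_A/\lambda_0$ for $y\in K$. I would use Assumption~\ref{ass:NH}. Since $\|e^{A(y)t}\|\le K_Ae^{-\lambda_0t}$, the integral $\int_0^\infty e^{A(y)t}\,dt$ converges absolutely. Because $\frac{d}{dt}e^{A(y)t}=A(y)e^{A(y)t}$ and $e^{A(y)t}\to0$, this integral equals $-A(y)^{-1}$. Hence
\[
\|A(y)^{-1}\|\le\int_0^\infty K_Ae^{-\lambda_0t}\,dt=\frac{K_A}{\lambda_0}.
\]
This also confirms that $\mu_z$ is well defined. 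Combining the two steps gives $|\mu_z(y)|\le K_A\beta/\lambda_0$ uniformly on $K$.

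The final step inserts this pointwise bound into the integral using $\|D_xg\|\le M_{D_xg}$, bounds the time integral by $t\le T/\varepsilon$, and takes the supremum over $t$. This yields exactly
\[
\frac{TK_AM_{D_xg}}{\lambda_0}\Bigl(\varepsilon M_CM_g+\tfrac{\sigma^2}{2}M_{C_2}M_h^2\Bigr).
\]
The resolvent bound is the main point; everything else is bookkeeping. The one subtlety to state explicitly is that the supremum over $t\le T/\varepsilon$ is taken on the event where $y$ has not left $K$, so the estimate is pathwise on that event.
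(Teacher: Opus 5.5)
Your proposal is correct and follows essentially the same route as the paper: bound $\|A(y)^{-1}\|\le K_A/\lambda_0$ via $A(y)^{-1}=-\int_0^\infty e^{A(y)s}\,ds$, combine this with $|b|\le\beta$ and $\|D_xg\|\le M_{D_xg}$, and integrate over $t\le T/\varepsilon$. Your remark that the estimate is pathwise on the event $\{y(s)\in K\}$ is a useful clarification that the paper leaves implicit in its statement.
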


\begin{proof}
Since
\[
\mu_z(y)=-A(y)^{-1}b(y),
\]
Assumption~\ref{ass:NH} implies
\[
A(y)^{-1}
=
-\int_0^\infty e^{A(y)s}\,ds,
\]
hence
\[
\|A(y)^{-1}\|
\le
\int_0^\infty \|e^{A(y)s}\|\,ds
\le
K_A\int_0^\infty e^{-\lambda_0 s}\,ds
=
\frac{K_A}{\lambda_0}.
\]
Using also $\|b(y)\|\le \beta$, we obtain
\[
\|\mu_z(y)\|
\le
\frac{K_A\beta}{\lambda_0}.
\]

Therefore
\[
|\Delta y_{\mathrm{sys}}(t)|
\le
\varepsilon\int_0^t
|D_xg(X^*(y(s)),y(s))|
\,|\mu_z(y(s))|
\,ds
\le
\varepsilon t\,M_{D_xg}
\frac{K_A\beta}{\lambda_0}.
\]

Substituting \(t=T/\varepsilon\) and the expression for \(\beta\)
yields
\[
\sup_{t\le T/\varepsilon}
|\Delta y_{\mathrm{sys}}(t)|
\le
\frac{T K_A M_{D_xg}}{\lambda_0}
\left(
\varepsilon M_C M_g
+
\frac{\sigma^2}{2}M_{C_2}M_h^2
\right).
\]
\end{proof}

\begin{lemma}[Transition-matrix estimate for $\tilde{z}$]\label{lem:phi}
Let $\Phi(s,u)$ be the fundamental matrix of the (frozen-coefficient)
linear system $\frac{d}{ds}\Phi(s,u) = A(y(s))\Phi(s,u)$ with $\Phi(u,u) = I$.
By Assumption~\ref{ass:NH} and the uniform negative definiteness
of $A(y)$, on the event $\{y(s) \in K,\;\forall s \in [u,s]\}$,
\[
\|\Phi(s,u)\| \le K_A e^{-\lambda_0(s-u)} \quad \text{for all } s \ge u.
\]
(This is a standard result on time-ordered exponentials of uniformly
stable families; cf.\ \citet{Coppel1978}, Theorem~1.1.)
\end{lemma}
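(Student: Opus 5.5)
The plan is to argue pathwise, reading the hypothesis ``uniform negative definiteness of $A(y)$'' in the lemma as the logarithmic-norm condition
\[
\tfrac12\bigl(A(y)+A(y)^T\bigr)\le -\lambda_0 I_n \qquad\text{for all } y\in K .
\]
I would state this reading explicitly at the start of the proof. The main point is that Assumption~\ref{ass:NH} alone, which controls each frozen exponential $e^{A(y)t}$, does not determine the time-ordered product. Under the symmetric-part condition, however, the bound follows from a one-line energy estimate, and the constant $K_A\ge 1$ only enters at the end as a harmless enlargement.

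\textbf{Step 1 (well-posedness).} Fix a sample path on the event $\{y(r)\in K,\ \forall r\in[u,s]\}$. Since $y$ has continuous paths and $A(y)=D_xf(X^*(y),y)$ is continuous on $K$ (by Assumption~\ref{ass:smooth}), the map $r\mapsto A(y(r))$ is continuous and bounded by $M_A$ on $[u,s]$. Hence the linear ODE $\frac{d}{dr}\Phi(r,u)=A(y(r))\Phi(r,u)$, $\Phi(u,u)=I$, has a unique $C^1$ solution on $[u,s]$. No stochastic calculus is needed, because the equation is a random ODE with continuous coefficients.

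\textbf{Step 2 (energy estimate).} Fix $v\in\RR^n$ and set $\xi(r)=\Phi(r,u)v$. Then
\[
\frac{d}{dr}|\xi(r)|^2=\xi(r)^T\bigl(A(y(r))+A(y(r))^T\bigr)\xi(r)\le -2\lambda_0|\xi(r)|^2 .
\]
Grönwall's inequality gives $|\xi(r)|\le e^{-\lambda_0(r-u)}|v|$. Taking the supremum over $|v|=1$ yields $\|\Phi(s,u)\|\le e^{-\lambda_0(s-u)}$. Since $K_A\ge1$, this gives $\|\Phi(s,u)\|\le K_A e^{-\lambda_0(s-u)}$, which is exactly the claimed bound with the same constants and on the whole event $\{y(r)\in K\}$.

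\textbf{Step 3 (consistency and main obstacle).} I would remark that the symmetric-part condition implies Assumption~\ref{ass:NH} with $K_A=1$, by the same computation applied to a frozen $y$. The reading is therefore compatible with the standing assumptions, and this is the sense of the Coppel reference. The step I expect to be the genuine obstacle is justifying the hypothesis itself rather than the computation. If one only had the spectral bound with $K_A>1$, the natural substitute would be the $P(y)$-weighted norm $|\xi|_{P(y(r))}^2$. However, its derivative picks up $DP$-terms driven by the slow increments of $y$, including $\sigma\,dW$ contributions. These would degrade the rate and prevent the exact constants $K_A$ and $\lambda_0$. For that reason the proof should rest on the negative-definiteness hypothesis named in the lemma.
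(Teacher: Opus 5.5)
Your proof is correct under the reading you make explicit, but it takes a different route from the paper. The paper gives no argument beyond citing frozen-coefficient stability theory for uniformly stable families (Coppel).

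\textbf{What your route buys.} Your logarithmic-norm (Wa\.{z}ewski-type) energy estimate is elementary and purely pathwise. It does not care how fast $y(\cdot)$ moves, which matters here because $y$ is a diffusion and $r\mapsto A(y(r))$ is only H\"older continuous. It even gives the constant $1$ in place of $K_A$.

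\textbf{What the frozen-coefficient route buys.} It is the kind of argument that could handle general non-normal families satisfying only Assumption~\ref{ass:NH}. However, it needs a slow-variation bound on $r\mapsto A(y(r))$, which for Brownian-driven $y$ holds at best in a windowed sense and only with high probability. It also returns a degraded rate $\lambda_0-\eta$ with a larger constant, so it cannot deliver the pair $(K_A,\lambda_0)$ either.

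\textbf{Price of your route.} The hypothesis $\tfrac12(A+A^T)\le-\lambda_0 I$ pushes the numerical abscissa of every $A(y)$ down to the spectral bound. Normal families satisfy this (e.g.\ $A\equiv-1$ in Example~\ref{ex:sigmoid}), but generic non-normal ones do not.

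\textbf{The strengthening is necessary.} Your diagnosis that some such strengthening is unavoidable is right, and it can be made sharp. Take
\[
B=\begin{pmatrix}-1&2\\0&-2\end{pmatrix},\qquad
J=\begin{pmatrix}0&-1\\1&0\end{pmatrix},\qquad
A(\theta)=e^{\theta J}Be^{-\theta J}.
\]
Every $A(\theta)$ is orthogonally similar to $B$, so it satisfies Assumption~\ref{ass:NH} with $\lambda_0=1$ and a common $K_A$. Every $A(\theta)$ is also negative definite at level $\mu=(3-\sqrt5)/2$, since $\tfrac12(B+B^T)\le-\mu I$. Along the path $\theta(r)=-r$, however,
\[
\Phi(s,u)=e^{\theta(s)J}e^{(B+J)(s-u)}e^{-\theta(u)J},
\]
where $B+J=\tfrac12(B+B^T)$ is symmetric. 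Hence $\|\Phi(s,u)\|=e^{-\mu(s-u)}$, which exceeds $K_Ae^{-(s-u)}$ once $s-u$ is large.

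So Assumption~\ref{ass:NH} together with negative definiteness at a level $\mu<\lambda_0$ does not in general give the lemma. Your reading is exactly the one under which the stated constants hold. You should therefore state $\tfrac12(A+A^T)\le-\lambda_0 I$ as an explicit additional hypothesis rather than as an interpretation of the Coppel citation.
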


\begin{lemma}[Exponential stability of the modified linear operator]\label{lem:modA}
Define
\[
\hat{A}(y) := A(y) - \varepsilon\,DX^*(y)\,D_xg(X^*(y),y).
\]
Under Assumption~\ref{ass:NH}, for all $\varepsilon \le \varepsilon_0
:= \lambda_0/(2K_A M_C M_{D_xg})$, $\hat{A}(y)$ satisfies
\[
\|e^{\hat{A}(y)t}\| \le K_A e^{-\hat\lambda_0 t}
\quad \forall\,t \ge 0,\;\forall\,y \in K,
\]
with $\hat\lambda_0 = \lambda_0/2 > 0$. Let $\hat\Phi(t,s)$ be the
fundamental matrix of
$\frac{d}{dt}\hat\Phi(t,s) = \hat{A}(y(t))\hat\Phi(t,s)$
with $\hat\Phi(s,s) = I$. Then
$\|\hat\Phi(t,s)\| \le K_A e^{-\hat\lambda_0(t-s)}$
for $0 \le s \le t \le T/\varepsilon$ on $\{y(u) \in K,\,\forall u\in[s,t]\}$.
\end{lemma}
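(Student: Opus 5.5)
The plan is to treat $\hat A(y)$ as a small perturbation of $A(y)$ and close a Gronwall argument on the exponentially weighted norm, first for the frozen matrix exponential and then for the time-ordered fundamental matrix. Set $E(y) := -\varepsilon\,DX^*(y)\,D_xg(X^*(y),y)$, so that $\hat A = A + E$. By Definition~\ref{def:constants} and \eqref{eq:MC},
\[
\|E(y)\| \le \delta := \varepsilon M_C M_{D_xg}
\]
uniformly on $K$. The hypothesis $\varepsilon \le \varepsilon_0 = \lambda_0/(2K_AM_CM_{D_xg})$ is exactly the statement $K_A\delta \le \lambda_0/2$, which is the only smallness needed.

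\textbf{Frozen estimate.} Fix $y\in K$. I would write the variation-of-constants identity
\[
e^{\hat A t} = e^{At} + \int_0^t e^{A(t-s)}\,E\,e^{\hat A s}\,ds.
\]
Apply Assumption~\ref{ass:NH} to $e^{A(t-s)}$ and $e^{At}$, and set $u(t) := e^{\lambda_0 t}\|e^{\hat A t}\|$. This gives the integral inequality
\[
u(t) \le K_A + K_A\delta\int_0^t u(s)\,ds.
\]
Gronwall's lemma then yields $u(t)\le K_A e^{K_A\delta t}$, that is,
\[
\|e^{\hat A t}\|\le K_A e^{-(\lambda_0-K_A\delta)t}\le K_A e^{-\lambda_0 t/2}.
\]
This is the first claim with $\hat\lambda_0=\lambda_0/2$, and the prefactor $K_A$ is unchanged.

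\textbf{Time-varying estimate.} On the event $\{y(u)\in K,\ \forall u\in[s,t]\}$, the same Duhamel identity holds pathwise for the fundamental matrices:
\[
\hat\Phi(t,s)=\Phi(t,s)+\int_s^t \Phi(t,u)\,E(y(u))\,\hat\Phi(u,s)\,du.
\]
The identity holds because both sides solve $\partial_t X = A(y(t))X + E(y(t))\hat\Phi(t,s)$ with $X(s,s)=I$, and $t\mapsto y(t)$ is continuous, so the ODE is well posed pathwise. Invoking Lemma~\ref{lem:phi} for $\|\Phi(t,u)\|\le K_Ae^{-\lambda_0(t-u)}$ and setting $w(t):=e^{\lambda_0(t-s)}\|\hat\Phi(t,s)\|$ gives again
\[
w(t)\le K_A + K_A\delta\int_s^t w(u)\,du.
\]
Gronwall yields $\|\hat\Phi(t,s)\|\le K_Ae^{-(\lambda_0-K_A\delta)(t-s)}\le K_Ae^{-\hat\lambda_0(t-s)}$ for $0\le s\le t\le T/\varepsilon$.

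\textbf{Main obstacle.} The delicate point is the second step's reliance on Lemma~\ref{lem:phi}. A uniform frozen-time bound $\|e^{A(y)t}\|\le K_Ae^{-\lambda_0 t}$ does not by itself control time-ordered products when $y(t)$ varies, and here $y$ moves by Brownian increments, not slowly. My plan is to take Lemma~\ref{lem:phi} as given, since the perturbation argument then goes through verbatim. If a self-contained justification were required, I would instead work in the $P(y)$-weighted norm $|z|_{P}^2=z^TP(y)z$. Along $\dot z = A(y(t))z$, Itô's formula as in Lemma~\ref{lem:ito_V} gives a decay rate $1/c_2$ minus $DP$- and $D^2P$-correction terms of size $O(\varepsilon+\sigma^2)$. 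The catch is that this yields a constant $\sqrt{c_2/c_1}=K_A^2$ rather than $K_A$, and a slightly reduced rate. I would therefore expect the stated constants to hold only in the form inherited from Lemma~\ref{lem:phi}.
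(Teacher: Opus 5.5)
Your proof is the paper's proof. The frozen estimate uses the same variation-of-constants identity, the same weighted quantity $e^{\lambda_0 t}\|e^{\hat A t}\|$, and Gronwall under $K_A\varepsilon M_C M_{D_xg}\le\lambda_0/2$; the paper's one-line ``same Gronwall argument applied to the time-varying equation'' for $\hat\Phi$ is exactly your Duhamel identity relative to $\Phi(t,u)$ combined with Lemma~\ref{lem:phi}.

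Your caveat is justified and applies equally to the paper. For $K_A>1$ a frozen-time bound need not transfer to time-ordered products with the same $K_A$ and $\lambda_0$, whereas for $K_A=1$ it does via the logarithmic norm. However, your $P(y)$-weighted fallback would not repair this pathwise: along $\dot z=A(y(t))z$ the term $\sigma\,z^T DP[h]\,z\,dW_t$ is an $O(\sigma)$ martingale rather than a drift correction, so that route gives only a high-probability bound.
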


\begin{proof}
By the variation-of-constants formula, $U(t) := e^{\hat{A}t}$
satisfies $U(t) = e^{At} + \int_0^t e^{A(t-s)}\delta A\cdot U(s)\,ds$
with $\delta A := \hat{A} - A = -\varepsilon DX^*D_xg$. Taking norms:
\[
\|U(t)\| \le K_A e^{-\lambda_0 t}
+ \int_0^t K_A e^{-\lambda_0(t-s)}\cdot\varepsilon M_C M_{D_xg}
\cdot\|U(s)\|\,ds.
\]
Set $v(t) := e^{\lambda_0 t}\|U(t)\|$; then
$v(t) \le K_A + \varepsilon K_A M_C M_{D_xg}\int_0^t v(s)\,ds$.
Gr\"onwall's inequality gives
$v(t) \le K_A\exp(\varepsilon K_A M_C M_{D_xg} t)$, so
\[
\|U(t)\| \le K_A \exp\!\bigl(-(\lambda_0
- \varepsilon K_A M_C M_{D_xg})t\bigr).
\]
For $\varepsilon \le \varepsilon_0 = \lambda_0/(2K_A M_C M_{D_xg})$ the
exponent is $\ge -\lambda_0/2 =: -\hat\lambda_0$, giving the
claimed bound. The estimate for $\hat\Phi(t,s)$ follows by the same
Gr\"onwall argument applied to the time-varying equation
(cf.\ \citet{Coppel1978}, Theorem~1.1).
\end{proof}

\begin{lemma}[Bound on fast fluctuation term]\label{lem:fast}
On the path-concentration event $\mathcal{G} = \{|z(s)| \le r_*,\; y(s) \in K,\; \forall s \le T/\varepsilon\}$,
\[
\PP\!\Bigl(\sup_{t \le T/\varepsilon} |\Delta y_{\mathrm{fast}}(t)|
\ge C_{\mathrm{fast}}\,\sigma\sqrt{\varepsilon} + C_{\mathrm{rem}}\bigr)
\;\le\; p_{\mathrm{fast}}, \qquad
p_{\mathrm{fast}} = 2m\exp\!\Bigl(-\frac{C_{\mathrm{fast}}^2\lambda_0^3}{8 T M_{D_xg}^2 M_C^2 M_h^2 K_A^2}\Bigr),
\]
where $C_{\mathrm{rem}} = O(\varepsilon+\sigma^2M_{C_2})$ is a deterministic remainder
absorbed into the bootstrap, and $C_{\mathrm{NL}}$ depends only on $M_B$, $M_C$,
$M_{D_xg}$, $C_f$, $C_g$.
\end{lemma}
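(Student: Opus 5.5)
The plan is to write an evolution equation for $\tilde z = z-\mu_z(y)$ and solve it by variation of constants with the propagator $\hat\Phi$ of Lemma~\ref{lem:modA}. Substituting into $\Delta y_{\mathrm{fast}}$ and exchanging the order of integration should turn the stochastic convolution into a single It\^o integral with an explicitly bounded, adapted integrand. A Bernstein-type exponential martingale bound, applied coordinate by coordinate, then produces the factor $2m$ in $p_{\mathrm{fast}}$.

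\textbf{Step 1: the $\tilde z$ equation.} Since $\hat A$ absorbs the linear $\varepsilon$-correction $-\varepsilon DX^*D_xg\,z$ contained in $N$, Lemma~\ref{lem:expansion} gives
\[
dz = \bigl[\hat A(y)z + b(y) + \hat N(z,y)\bigr]dt + \Sigma_z(y)\,dW_t,
\qquad |\hat N|\le \Gamma_2|z|^2 + C_f|z|^3 .
\]
Next we apply It\^o's formula to $\mu_z(y)=-A(y)^{-1}b(y)$, using $A\mu_z=-b$ and $\hat A-A=O(\varepsilon)$. This yields
\[
d\tilde z = \hat A(y)\tilde z\,dt + \rho(t)\,dt + \bigl(\Sigma_z - \sigma D\mu_z\,h\bigr)\,dW_t .
\]
Here $\rho$ collects $\hat N$, the term $\varepsilon DX^*D_xg\,\mu_z$, and the $\varepsilon$- and $\sigma^2$-drift of $\mu_z(y)$. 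Since $|\mu_z|\le K_A\beta/\lambda_0$ and $\beta=O(\varepsilon+\sigma^2M_{C_2})$, the diffusion correction $\sigma D\mu_z h$ is of higher order than $\Sigma_z=-\sigma DX^*h$. On $\mathcal G$ we have $|z|\le r_*$, so $|\rho|\le C_{\mathrm{NL}}(r_*^2+\varepsilon\beta+\sigma^2\beta)$.

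\textbf{Step 2: variation of constants and Fubini.} Variation of constants gives
\[
\tilde z(s)=\hat\Phi(s,0)\tilde z(0)+\int_0^s\hat\Phi(s,u)\rho(u)\,du+\int_0^s\hat\Phi(s,u)\tilde\Sigma(u)\,dW_u .
\]
We insert this into $\Delta y_{\mathrm{fast}}(t)=\varepsilon\int_0^t D_xg\,\tilde z\,ds$ and bound each piece.
\begin{itemize}
\item The initial-condition piece is at most $\varepsilon M_{D_xg}K_A|\tilde z(0)|/\hat\lambda_0=O(\varepsilon^2+\varepsilon\beta)$.
\item The $\rho$-piece is at most $\varepsilon\,(T/\varepsilon)\,M_{D_xg}K_A\sup|\rho|/\hat\lambda_0$. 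Under $\sigma\le\sigma_c$ we have $r_*^2=O(D_0/\varepsilon)$ times constants, and this term is $O(\varepsilon+\sigma^2M_{C_2})$. These two pieces together form $C_{\mathrm{rem}}$.
\item For the stochastic piece, the stochastic Fubini theorem gives
\[
\varepsilon\int_0^t\Bigl(\int_u^t D_xg(s)\hat\Phi(s,u)\,ds\Bigr)\tilde\Sigma(u)\,dW_u .
\]
The bracket is not adapted in $u$, because it depends on $y(s)$ for $s>u$.
\end{itemize}

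\textbf{Step 3 (main obstacle): adaptedness.} The first fix I would try is to freeze coefficients: replace $D_xg(s)\hat\Phi(s,u)$ by its value with $y$ frozen at $y(u)$, namely $D_xg(y(u))\int_0^\infty e^{\hat A(y(u))r}\,dr=-D_xg(y(u))\hat A(y(u))^{-1}$. This is adapted, and its norm is at most $2K_AM_{D_xg}/\lambda_0$.

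The freezing error must then be controlled. It involves $\|DA\|\,|y(s)-y(u)|$, which is of order $\varepsilon|s-u|+\sigma\sqrt{|s-u|}$ over the effective window $|s-u|\lesssim 1/\hat\lambda_0$. I would either place it in $C_{\mathrm{rem}}$ using a crude bound on the increments of $y$, or fold it into $p_{\mathrm{fast}}$. An alternative that avoids freezing altogether is to integrate by parts via the identity $\tilde z=\hat A^{-1}(d\tilde z-\rho\,dt-\tilde\Sigma\,dW)$, which gives the adapted integrand directly.

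\textbf{Step 4: exponential martingale bound.} The resulting martingale $\mathcal M_t=\varepsilon\int_0^t G(u)\tilde\Sigma(u)\,dW_u$ has quadratic variation
\[
\langle\mathcal M\rangle_{T/\varepsilon}\le \varepsilon^2\,\frac{T}{\varepsilon}\,\frac{4K_A^2M_{D_xg}^2}{\lambda_0^2}\,\sigma^2M_C^2M_h^2 .
\]
We apply the bound $\PP(\sup_t|\mathcal M^{(i)}_t|\ge a)\le 2e^{-a^2/(2Q)}$ to each of the $m$ coordinates, with threshold $a=C_{\mathrm{fast}}\sigma\sqrt\varepsilon$, and take a union bound. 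With $\hat\lambda_0=\lambda_0/2$ this gives an exponent of the form $C_{\mathrm{fast}}^2\lambda_0^{2}/(8TM_{D_xg}^2M_C^2M_h^2K_A^2)$. The extra power of $\lambda_0$ in the stated $\kappa_{\mathrm{fast}}$ then needs to be matched, either by normalizing with $c_2$ or by tracking the time-scale through $\lambda_0$ more carefully. I expect this constant-matching, together with the non-adaptedness in Step 3, to be the delicate parts.
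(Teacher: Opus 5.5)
\textbf{The genuine gap: the nonlinear part of $\rho$ (Step~2, second bullet).}

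Your bound for that piece is $TM_{D_xg}K_A\sup|\rho|/\hat\lambda_0$. Here $\sup|\rho|$ is controlled only through $C_{\mathrm{NL}}r_*^2$, with $r_*^2=2D_0T/(c_1\varepsilon)$. Since $D_0\ge\sigma^2c_2M_C^2M_h^2$, this gives a bound of order $T^2D_0/\varepsilon\gtrsim T^2\sigma^2/\varepsilon$. That is bounded under the self-consistency condition $D_0/\varepsilon=O(1)$ of Lemma~\ref{lem:concentration}. But it is of order one at $\sigma\sim\sqrt{\varepsilon}$, and it is not $O(\varepsilon+\sigma^2M_{C_2})$: take $M_{C_2}=O(1)$ and $\sigma=C_0\sqrt{\varepsilon}$.

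The reason is that $r_*$ is a large-deviation radius for a window of length $T/\varepsilon$. You have $r_*^2\sim\sigma^2/\varepsilon$, whereas typically $|z|^2\sim\sigma^2M_C^2M_h^2/\lambda_0$. Squaring $r_*$ and integrating with weight $\varepsilon$ over $[0,T/\varepsilon]$ loses a full factor $1/\varepsilon$.

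The missing ingredient is an \emph{integrated} control of $|z|^2$. Integrate $dV\le(-\gamma_VV+D_0)\,dt+dM_V$ and bound $M_V$ by an exponential martingale inequality. This gives $\int_0^{T/\varepsilon}V\,ds\le L=3(V(0)+D_0T/\varepsilon)/\gamma_V$ outside an event $\mathcal{H}^c$ of probability at most $2e^{-\kappa'/\varepsilon}$ (Lemma~\ref{lem:timeavg}). The quadratic part of $\rho$ is at most $C|z|^2\le(C/c_1)V$. Tonelli against the propagator bound then gives a contribution $\lesssim K_AM_{D_xg}C\,TD_0/(c_1\lambda_0\gamma_V)$, which is $O(\varepsilon+\sigma^2M_{C_2})$ for $\sigma\le\sigma_c$. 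This is exactly how the paper obtains $C_{\mathrm{rem}}$.

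The price is that the estimate holds on $\mathcal{G}\cap\mathcal{H}$, not on $\mathcal{G}$ alone, and $\PP(\mathcal{H}^c)$ must be added to the failure probability. The same defect reappears in your integration-by-parts variant, through $\varepsilon\int_0^tD_xg\,\hat A^{-1}\rho\,ds$, and is repaired the same way.

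\textbf{The remaining steps, and how your route compares with the paper's.}

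Freezing coefficients does not fix adaptedness. The freezing error is still a $dW_u$-integral of an integrand depending on $y$ over $[u,t]$, so neither It\^o isometry nor an exponential martingale bound applies to it. Drop it.

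Integration by parts via $\tilde z\,dt=\hat A^{-1}(d\tilde z-\rho\,dt-\tilde\Sigma\,dW)$ is the right move. It produces the adapted martingale $-\varepsilon\int D_xg\,\hat A^{-1}\tilde\Sigma\,dW$, whose quadratic variation is the one you wrote. The extra terms are all harmless:
a boundary term $O(\varepsilon r_*)$;
drift terms $O(\varepsilon Tr_*+\sigma^2Tr_*)$ from $d(D_xg\,\hat A^{-1})$;
a cross-variation $O(\sigma^2T)$;
a further martingale with small quadratic variation.
Note that It\^o's formula for $D_xg(X^*(y),y)$ needs one more derivative of $g$ than Assumption~\ref{ass:smooth} grants, or a mollification.

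This is a genuinely different route from the paper. The paper conditions on the slow path, treats $\tilde z_{\mathrm{lin}}$ as conditionally Gaussian, and then uses Borell--TIS and Dudley. Your route has three advantages:
it needs only a deterministic quadratic-variation bound and a Bernstein inequality;
carried out entirely in integrated-by-parts form, it needs only the frozen bound $\|\hat A(y)^{-1}\|\le2K_A/\lambda_0$, not a propagator estimate for the time-dependent family $\hat A(y(t))$;
it avoids the conditioning step, which is delicate here because $\Sigma_z\,dW=-DX^*(y)(dy-\varepsilon g\,dt)$ is itself a functional of the slow path.

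Finally, do not try to match $\lambda_0^3$. For $f=-\lambda(x-ky)$, $D_xg\equiv a$, $h\equiv1$, one finds $\Delta y_{\mathrm{fast}}(t)\approx-\varepsilon ak\sigma W_t/\lambda$. So the exponent really scales like $\lambda_0^2$, as you obtained. Since $C_{\mathrm{fast}}$ is free, the bootstrap only uses the form $2m\,e^{-cC_{\mathrm{fast}}^2}$.
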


\begin{proof}
The centred deviation $\tilde{z} := z - \mu_z$ satisfies the SDE
\[
d\tilde{z} = \hat{A}(y(t))\tilde{z}\,dt + \Sigma_z(y(t))\,dW_t + \mathrm{Rem}_{\tilde{z}}(t)\,dt,
\]
where $\hat{A}$ is from Lemma~\ref{lem:modA} (the $-\varepsilon\,DX^*D_xg\,\tilde{z}$
contribution being absorbed into $\hat{A}$), $\|\Sigma_z\| \le \sigma M_C M_h$, and
$\mathrm{Rem}_{\tilde{z}}$ collects the remaining higher-order terms from the
Taylor expansion of $f$ and $g$. On the event $\mathcal{G}$, we have
$|z(s)| \le r_*$ for all $s$, so by the Taylor remainder bounds
(Lemma~\ref{lem:expansion}), there exists a constant $C'_{\mathrm{NL}}$
depending only on $M_B$, $M_C$, $M_{D_xg}$, $C_f$, $C_g$ such that
$\|\mathrm{Rem}_{\tilde{z}}(s)\| \le C'_{\mathrm{NL}} |z(s)|^2
\le (C'_{\mathrm{NL}}/c_1)V(s)$ uniformly on $[0, T/\varepsilon]$.

\medskip\noindent\textbf{Step 1: Decomposition.}
Define $\tilde{z}_{\mathrm{lin}}(t)$ and $\tilde{z}_{\mathrm{rem}}(t)$ as the solutions of
\[
d\tilde{z}_{\mathrm{lin}} = \hat{A}(y(t))\tilde{z}_{\mathrm{lin}}\,dt + \Sigma_z(y(t))\,dW_t,
\qquad \tilde{z}_{\mathrm{lin}}(0) = \tilde{z}(0),
\]
\[
d\tilde{z}_{\mathrm{rem}} = \hat{A}(y(t))\tilde{z}_{\mathrm{rem}}\,dt + \mathrm{Rem}_{\tilde{z}}(t)\,dt,
\qquad \tilde{z}_{\mathrm{rem}}(0) = 0.
\]
By linearity, $\tilde{z}(t) = \tilde{z}_{\mathrm{lin}}(t) + \tilde{z}_{\mathrm{rem}}(t)$.

\medskip\noindent\textbf{Step 2: Time-integral bound on $\tilde{z}_{\mathrm{rem}}$.}
Using the fundamental matrix $\hat{\Phi}(t,s)$ of $d\hat{\Phi}/dt = \hat{A}(y(t))\hat{\Phi}$
with $\hat{\Phi}(s,s) = I$, which satisfies $\|\hat{\Phi}(t,s)\| \le K_A e^{-\hat{\lambda}_0(t-s)}$
(Lemma~\ref{lem:modA}), the variation-of-constants formula gives
\[
\tilde{z}_{\mathrm{rem}}(t) = \int_0^t \hat{\Phi}(t,s)\,\mathrm{Rem}_{\tilde{z}}(s)\,ds.
\]
Using $\|\mathrm{Rem}_{\tilde{z}}(s)\| \le (C'_{\mathrm{NL}}/c_1)V(s)$ and Tonelli's
theorem (non-negative integrand) to exchange the order of integration:
\[
\int_0^{T/\varepsilon} \|\tilde{z}_{\mathrm{rem}}(t)\|\,dt
\le \int_0^{T/\varepsilon} \int_0^t K_A e^{-\hat{\lambda}_0(t-s)}\,
  \frac{C'_{\mathrm{NL}}}{c_1} V(s)\,ds\,dt
= \int_0^{T/\varepsilon} \frac{K_A C'_{\mathrm{NL}}}{c_1} V(s)
  \int_s^{T/\varepsilon} e^{-\hat{\lambda}_0(t-s)}\,dt\,ds.
\]
Evaluating the inner integral as $\int_s^{T/\varepsilon}e^{-\hat{\lambda}_0(t-s)}\,dt
\le 1/\hat{\lambda}_0 = 2/\lambda_0$ yields
\[
\int_0^{T/\varepsilon} \|\tilde{z}_{\mathrm{rem}}(t)\|\,dt
\;\le\; \frac{2K_A C'_{\mathrm{NL}}}{c_1 \lambda_0} \int_0^{T/\varepsilon} V(s)\,ds.
\]

\medskip\noindent\textbf{Step 3: Gaussian estimates on $\tilde{z}_{\mathrm{lin}}$.}
Condition on the slow path $\{y(r)\}_{0\le r \le T/\varepsilon}$.
Given $y(\cdot)$, the coefficients $\hat{A}(y(\cdot))$ and $\Sigma_z(y(\cdot))$ are
deterministic, so $\tilde{z}_{\mathrm{lin}}$ is a conditionally Gaussian
Ornstein--Uhlenbeck process driven by $\hat{A}$. The variation-of-constants
formula yields (with $\hat{\Phi}$ from Lemma~\ref{lem:modA})
\[
\tilde{z}_{\mathrm{lin}}(s) = \hat{\Phi}(s,u)\tilde{z}_{\mathrm{lin}}(u) + \int_u^s \hat{\Phi}(s,r)\Sigma_z(y(r))\,dW_r.
\]
Conditional on $y(\cdot)$ and $\mathcal{F}_u$, the stochastic integral is independent
of $\tilde{z}_{\mathrm{lin}}(u)$ with covariance
\[
\Sigma_{u,s} = \int_u^s \hat{\Phi}(s,r)\Sigma_z(y(r))\Sigma_z(y(r))^T\hat{\Phi}(s,r)^T\,dr,
\]
satisfying $\|\Sigma_{u,s}\| \le \sigma^2 M_C^2 M_h^2 K_A^2/(2\hat{\lambda}_0) =: \hat{\Sigma}_0$.
The cross-covariance satisfies
\[
\|\EE[\tilde{z}_{\mathrm{lin}}(s)\tilde{z}_{\mathrm{lin}}(u)^T \mid y(\cdot)]\|
\le K_A e^{-\hat{\lambda}_0(s-u)}\,\hat{\Sigma}_0,
\]
where the initial-condition term contributes $O(\varepsilon^2 e^{-2\hat{\lambda}_0 u})$
since $|z_0| = O(\varepsilon)$, and is dominated by $\hat{\Sigma}_0$ for the variance
double-integral below.

\medskip\noindent\textbf{Step 4: Variance bound for the linear part.}
By the properties of the linear stochastic integral, the conditional variance at any time $t \le T/\varepsilon$ satisfies
\[
\begin{aligned}
\Var(\Delta y_{\mathrm{lin}}(t))
&\le \varepsilon^2 M_{D_xg}^2 \int_0^t\!\int_0^t
   K_A^2 e^{-\hat{\lambda}_0|s-u|}\hat{\Sigma}_0\,ds\,du \\[4pt]
&\le \frac{\varepsilon^2 t}{\varepsilon}
   \cdot \frac{2K_A^2\hat{\Sigma}_0 M_{D_xg}^2}{\hat{\lambda}_0^2} \\[4pt]
&\le \frac{K_A^2\varepsilon T \sigma^2 M_C^2 M_h^2 M_{D_xg}^2}{\hat{\lambda}_0^3}
 = \frac{8\,K_A^2\varepsilon T \sigma^2 M_C^2 M_h^2 M_{D_xg}^2}{\lambda_0^3}
 = O(\sigma^2\varepsilon).
\end{aligned}
\]
where the equality $1/\hat{\lambda}_0^3 = 8/\lambda_0^3$ reflects the factor
$2^3$ arising from $\hat{\lambda}_0 = \lambda_0/2$ (Lemma~\ref{lem:modA});
this $O(1)$ factor is absorbed by choosing $C_{\mathrm{fast}}$ larger
in Step~5b.

\medskip\noindent\textbf{Step 5: Final decomposition and tail bound.}
Split the fast fluctuation integral:
\[
\Delta y_{\mathrm{fast}}(t)
= \underbrace{\varepsilon\int_0^t D_xg \cdot \tilde{z}_{\mathrm{lin}}(s)\,ds}_{\Delta y_{\mathrm{lin}}(t)}
+ \underbrace{\varepsilon\int_0^t D_xg \cdot \tilde{z}_{\mathrm{rem}}(s)\,ds}_{\Delta y_{\mathrm{rem}}(t)}.
\]
The remainder term is bounded on $\mathcal{G} \cap \mathcal{H}$
(where $\mathcal{H}$ is from Lemma~\ref{lem:timeavg}) using the
time-integral estimate of Step~2:
\[
\sup_{t \le T/\varepsilon} \|\Delta y_{\mathrm{rem}}(t)\|
\;\le\; \varepsilon\, M_{D_xg} \int_0^{T/\varepsilon}
  \|\tilde{z}_{\mathrm{rem}}(t)\|\,dt
\;\le\; \frac{2\varepsilon\, K_A\, M_{D_xg}\, C'_{\mathrm{NL}}}{c_1\lambda_0}
  \int_0^{T/\varepsilon} V(s)\,ds
\;\le\; \frac{6 K_A M_{D_xg} C'_{\mathrm{NL}} T}{c_1\lambda_0\gamma_V}\,D_0
  \;=:\; C_{\mathrm{rem}},
\]
where we used $\int_0^{T/\varepsilon}V \le L \approx 3D_0 T/(\gamma_V\varepsilon)$
on $\mathcal{H}$ (Lemma~\ref{lem:timeavg}(b)) and $V(0) = O(\varepsilon^2)$
is negligible. Under the self-consistency of Lemma~\ref{lem:concentration}
($D_0/\varepsilon = O(1)$), the leading diffusion-energy term of $D_0$ is
$\sigma^2 c_2 M_C^2 M_h^2$, and the curvature-bias squared term satisfies
$(c_2\beta)^2/\tilde\lambda = O(\varepsilon^2 + \sigma^4 M_{C_2}^2)$.
Under both branches of $\sigma_c$, one has
$C_{\mathrm{rem}} = O(\varepsilon + \sigma^2 M_{C_2})$.
This bound holds on $\mathcal{G} \cap \mathcal{H}$, and
Lemma~\ref{lem:timeavg}(a) guarantees that the contribution
$\PP(\mathcal{H}^c) \le 2e^{-\kappa'/\varepsilon}
+ e^{-\kappa/\varepsilon} + e^{-C_K}$ is absorbed into the existing
failure-probability terms of Theorem~\ref{thm:main}(b) by replacing
$\kappa$ with $\min(\kappa,\kappa')$.

\medskip\noindent\textbf{Step 5a: From pointwise variance to supremum tail.}
Conditional on the slow path $\{y(r)\}_{0\le r \le T/\varepsilon}$,
the process $\Delta y_{\mathrm{lin}}(t) = \varepsilon\int_0^t D_xg(X^*(y),y)\cdot\tilde{z}_{\mathrm{lin}}(s)\,ds$
is a centred $\RR^m$-valued linear Gaussian process on $[0,T/\varepsilon]$.
The variance calculation in Step~4 bounds the conditional variance of each component globally:
\[
\sigma_{\max}^2
:= \sup_{t \le T/\varepsilon} \max_{j=1,\dots,m}
\Var\!\bigl(\Delta y_{\mathrm{lin},j}(t)\mid y(\cdot)\bigr)
\;\le\; \frac{8\,K_A^2\,\varepsilon\,T\,\sigma^2 M_C^2 M_h^2 M_{D_xg}^2}{\lambda_0^3}.
\]

Since $\Delta y_{\mathrm{lin}}$ has a.s.\ continuous sample paths,
the Borell--Tsirelson--Ibragimov--Sudakov (Borell--TIS) inequality
(\citet[Ch.~2]{AdlerTaylor2007})gives, for each component $j$,
\[
\PP\!\Bigl(\sup_{t \le T/\varepsilon}
|\Delta y_{\mathrm{lin},j}(t)| \ge u \;\Big|\; y(\cdot)\Bigr)
\le 2\exp\!\Bigl(-\frac{(u - \mu_j)^2}{2\sigma_{\max}^2}\Bigr)
\quad \text{for } u \ge \mu_j,
\]
where
\[
\mu_j
=
\EE[\sup_t |\Delta y_{\mathrm{lin},j}(t)|
\mid y(\cdot)].
\]

The canonical metric of this conditionally Gaussian process is
$d(s,t)^2 = \EE[|\Delta y_{\mathrm{lin},j}(t) - \Delta y_{\mathrm{lin},j}(s)|^2 \mid y(\cdot)]$.
For $s < t$, writing the increment as $\varepsilon \int_s^t (D_xg \cdot \tilde{z}_{\mathrm{lin}}(r))_j\,dr$, 
Jensen's inequality yields
\[
d(s,t)^2 \le \varepsilon^2 M_{D_xg}^2 (t-s) \int_s^t \EE[|\tilde{z}_{\mathrm{lin}}(r)|^2 \mid y(\cdot)]\,dr.
\]
From the uniform variance bound derived in Step 3, $\EE[|\tilde{z}_{\mathrm{lin}}(r)|^2 \mid y(\cdot)] \le m \hat{\Sigma}_0 \le C\sigma^2$. 
Thus $d(s,t)^2 \le C \varepsilon^2 \sigma^2 (t-s)^2$, which implies the metric is Lipschitz: $d(s,t) \le C \varepsilon \sigma |t-s|$.
Consequently, the interval $[0, T/\varepsilon]$ under the metric $d$ has a covering number $N(\eta)$ bounded by
$N(\eta) \le C \frac{T/\varepsilon}{\eta/(\varepsilon \sigma)} = C \frac{T\sigma}{\eta}$.

The Dudley entropy integral is therefore finite:
\[
\int_0^{\sigma_{\max}} \sqrt{\log N(\eta)}\,d\eta
\le
\int_0^{\sigma_{\max}}
\sqrt{\log(C T\sigma / \eta)}\,d\eta
< \infty.
\]

By Dudley's entropy theorem for Gaussian processes
(\citet[Ch.~1]{AdlerTaylor2007}),
the expected supremum is uniformly controlled:
\[
\mu_j \le C'\sigma_{\max}
\le C''\sigma\sqrt{\varepsilon}.
\]

A union bound over $j = 1,\dots,m$ and removal of the
conditioning on $y(\cdot)$ yields the same exponential tail shape
with prefactor $2m$.

\medskip\noindent\textbf{Step 5b: Final assembly.}
Setting $u = C_{\mathrm{fast}}\sigma\sqrt{\varepsilon}$ with
$C_{\mathrm{fast}}$ sufficiently large so that $(C_{\mathrm{fast}} - C')
\sigma\sqrt{\varepsilon} \ge u/2$ (i.e.\ $C_{\mathrm{fast}} \ge 2C'$),
the Borell exponent satisfies
$(u-\mu_j)^2/(2\sigma_{\max}^2) \ge C_{\mathrm{fast}}^2\sigma^2\varepsilon
/(8\sigma_{\max}^2)$. Substituting the bound on $\sigma_{\max}^2$
from Step~4, the $\sigma^2\varepsilon$ factors cancel, yielding
\[
\PP\!\Bigl(\sup_{t \le T/\varepsilon} |\Delta y_{\mathrm{lin}}(t)|
\ge C_{\mathrm{fast}}\,\sigma\sqrt{\varepsilon}\Bigr)
\le 2m\exp\!\Bigl(-\frac{C_{\mathrm{fast}}^2\lambda_0^3}{8\,T\,M_{D_xg}^2 M_C^2 M_h^2 K_A^2}\Bigr)
= p_{\mathrm{fast}},
\]
where $C_{\mathrm{fast}}$ has been chosen large enough to absorb $C'$
and the Borell universal constant into the prefactor without altering
the $\sigma\sqrt{\varepsilon}$ scaling. Since $C_{\mathrm{fast}}$ can be
made arbitrarily large (independently of $\varepsilon$ and $\sigma$),
this yields $p_{\mathrm{fast}} \le 2m\,e^{-\kappa_{\mathrm{fast}}}$
for any prescribed $\kappa_{\mathrm{fast}} > 0$.
Combining with the deterministic
bound on $\Delta y_{\mathrm{rem}}$ from Step~2 yields the stated result.
\end{proof}

\begin{remark}[Method]
The transition-matrix estimate for the time-varying linear system
follows the standard averaging approach for fast processes with
slowly varying coefficients; cf.\ \citet{Veretennikov1999}
and \citet{PavliotisStuart2008}, Ch.~17.
\end{remark}

\begin{lemma}[Bound on martingale term]\label{lem:mart}
On the bootstrap event $\{\sup_{t \le T/\varepsilon}|\Delta y(t)| \le \delta\}$, 
there exists a constant $C_{\mathrm{mart}} > 0$ independent of $\varepsilon, \sigma$ such that
\[
\PP\!\Bigl(\sup_{t \le T/\varepsilon}|\Delta y_{\mathrm{mart}}(t)|
\ge \frac{\delta}{4e^{M_{DF}T}}\Bigr)
\;\le\; 2e^{-C_{\mathrm{mart}}}.
\]
\end{lemma}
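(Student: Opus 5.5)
The plan is to treat $\Delta y_{\mathrm{mart}}(t)=\sigma\int_0^t (h(y)-h(y_{\mathrm{ad}}))\,dW_s$ as a continuous local martingale whose quadratic variation we control through the bootstrap bound on $\Delta y$. We then apply the exponential martingale inequality already used in Step~3 of Lemma~\ref{lem:timeavg}, componentwise and with a union bound.

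To make the bootstrap hypothesis usable inside a probability estimate, we first localise. Set $\tau_\delta=\inf\{t:|\Delta y(t)|>\delta\}\wedge T/\varepsilon$ and work with the stopped martingale $N(t)=\Delta y_{\mathrm{mart}}(t\wedge\tau_\delta)$. On the bootstrap event $N$ coincides with $\Delta y_{\mathrm{mart}}$ on $[0,T/\varepsilon]$, so it suffices to bound $\PP(\sup_t|N(t)|\ge \delta/(4e^{M_{DF}T}))$ unconditionally.

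By Assumption~\ref{ass:globs}, $\|h(y)-h(y_{\mathrm{ad}})\|\le L_h|\Delta y|\le L_h\delta$ for $s\le\tau_\delta$. Each component $N_j$ therefore satisfies
\[
\langle N_j\rangle_{T/\varepsilon}\le \sigma^2L_h^2\delta^2\,T/\varepsilon .
\]
Since $\sigma\le\sigma_c\le C_0\sqrt{\varepsilon}$, this gives the deterministic bound $\langle N_j\rangle\le C_0^2L_h^2T\delta^2=:Q\delta^2$, with $Q$ independent of $\varepsilon$ and $\sigma$. This is the key point: the $\sigma^2/\varepsilon$ scaling cancels exactly as in Lemma~\ref{lem:exit_K}.

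Next we apply the exponential supermartingale $\exp(\theta N_j-\tfrac{\theta^2}{2}\langle N_j\rangle)$ together with Doob's inequality, optimised in $\theta$. This yields
\[
\PP\bigl(\sup_t|N_j(t)|\ge a\bigr)\le 2\exp\bigl(-a^2/(2Q\delta^2)\bigr),
\qquad a=\frac{\delta}{4\sqrt m\,e^{M_{DF}T}},
\]
where the factor $\sqrt m$ converts the componentwise bound into the Euclidean norm. The exponent equals $1/(32\,m\,Q\,e^{2M_{DF}T})$, in which $\delta$ cancels. We therefore set $C_{\mathrm{mart}}$ equal to this value, minus $\log m$ to absorb the union bound over components; it is independent of $\varepsilon$ and $\sigma$. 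If we want the prefactor exactly $2$, we can instead use a single vector exponential martingale with the bound $\langle\cdot\rangle\le$ trace, at the cost of a dimension constant.

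The main obstacle is the localisation. We must make sure the stopping at $\tau_\delta$ (and at $\tau_K$, so that $L_h$ and the constants remain valid) is legitimate, and that the event in the statement is correctly read as an intersection with the bootstrap event rather than a conditional probability. Conditioning on the bootstrap event would destroy the martingale structure, which is why we stop rather than condition.

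A secondary issue is that $C_{\mathrm{mart}}$ as obtained is a fixed constant rather than a free parameter. To get $p_{\mathrm{mart}}\to0$ as in Theorem~\ref{thm:main}(b), we would shrink $C_0$ (i.e.\ $Q$), or equivalently allow the threshold to carry a tunable multiple. We will track this through $Q=C_0^2L_h^2T$.
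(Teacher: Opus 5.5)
Your proposal is correct and follows the paper's argument: you bound the quadratic variation by $\sigma^2 L_h^2\delta^2 T/\varepsilon$ using the Lipschitz property of $h$ on the bootstrap event, apply the exponential martingale inequality so that $\delta$ cancels, and use $\sigma^2/\varepsilon\le C_0^2$ to obtain a constant independent of $\varepsilon$ and $\sigma$. Stopping at $\tau_\delta$ plays the role of the paper's joint-event inequality $\PP(\sup|M|\ge\lambda,\ \langle M\rangle\le V)\le 2e^{-\lambda^2/(2V)}$ (Revuz--Yor), and your componentwise union bound treats the $\RR^m$-valued martingale more carefully than the paper, which glosses over this point. The price is a factor $m$ in the exponent and a $-\log m$ shift, so for $m\ge 2$ positivity of $C_{\mathrm{mart}}$ needs $C_0$ small, which the paper already permits.
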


\begin{proof}
On the event where $\sup |\Delta y| \le \delta$, the predictable quadratic variation of the martingale part satisfies
$\langle \Delta y_{\mathrm{mart}} \rangle_{T/\varepsilon} \le \sigma^2 L_h^2 \delta^2 T/\varepsilon =: V$.
We apply the exponential martingale inequality (\citet{RevuzYor1999}, Thm.~IV.3.8) directly with the target threshold $\lambda = \frac{\delta}{4e^{M_{DF}T}}$ required by the bootstrap:
\[
\PP\!\Bigl(\sup_{t \le T/\varepsilon} |\Delta y_{\mathrm{mart}}(t)| \ge \lambda,\; \langle \Delta y_{\mathrm{mart}}\rangle \le V\Bigr)
\le 2\exp\!\Bigl(-\frac{\lambda^2}{2V}\Bigr)
= 2\exp\!\Bigl(-\frac{\delta^2 / (16 e^{2M_{DF}T})}{2\sigma^2 L_h^2 \delta^2 T/\varepsilon}\Bigr).
\]
Notice that the $\delta^2$ terms cancel exactly, yielding
\[
2\exp\!\Bigl(-\frac{\varepsilon}{\sigma^2}\,\frac{1}{32\, e^{2M_{DF}T} L_h^2 T}\Bigr).
\]
Since $\sigma \le \sigma_c \le C_0\sqrt{\varepsilon}$, we have $\varepsilon/\sigma^2 \ge 1/C_0^2$. Thus, the probability is bounded by $2\exp(-C_{\mathrm{mart}})$ with $C_{\mathrm{mart}} = 1/(32\, e^{2M_{DF}T} C_0^2 L_h^2 T) > 0$. This exponential tail is independent of $\varepsilon$ and safely absorbs the martingale error without requiring $\delta = O(\varepsilon)$.
\end{proof}

\begin{lemma}[Explicit bootstrap for $\Delta y$]\label{lem:bootstrap}
Define
\[
\delta := 2\,e^{M_{DF}T}\,\bigl(C_{\mathrm{sys}}(\varepsilon + \sigma^2 M_{C_2})
+ C_{\mathrm{fast}}\sigma\sqrt{\varepsilon} + C_{\mathrm{rem}}\bigr),
\]
where $C_{\mathrm{sys}} := \frac{T K_A M_{D_xg}}{\lambda_0}$, $C_{\mathrm{fast}}$ is from
Lemma~\ref{lem:fast}, and $C_{\mathrm{rem}}$ is the time-averaged nonlinear remainder
from Step~5 of Lemma~\ref{lem:fast}, satisfying
$C_{\mathrm{rem}} = O(\varepsilon + \sigma^2 M_{C_2})$ under
$\sigma \le \sigma_c(\varepsilon)$.
On the path-concentration event
$\mathcal{G} := \{|z(s)| \le r_*,\; \tau_K > T/\varepsilon,\; \forall s \le T/\varepsilon\}$,
and the time-integral event $\mathcal{H}$ of Lemma~\ref{lem:timeavg},
assume that $\sup_{t \le T/\varepsilon}|\Delta y(t)| \le \delta$.
Then for $\varepsilon$ and $\sigma$ sufficiently small (explicitly: $\varepsilon \le \varepsilon_1$
and $\sigma \le C_0\sqrt{\varepsilon}$ with $\varepsilon_1$ depending on $C_{\mathrm{sys}}, C_{\mathrm{fast}}, C_{\mathrm{rem}}, M_{DF}, T$),
the following hold simultaneously:
\begin{enumerate}[label=(\alph*)]
\item $|\Delta y_{\mathrm{sys}}(t)| \le \tfrac{\delta}{4e^{M_{DF}T}}$;
\item $|\Delta y_{\mathrm{fast}}(t)| \le \tfrac{\delta}{6e^{M_{DF}T}}$ with probability
      $\ge 1 - p_{\mathrm{fast}}$ on $\mathcal{H}$
      (the linear part is bounded by $C_{\mathrm{fast}}\sigma\sqrt{\varepsilon}$, and the
       time-averaged nonlinear remainder from $\tilde{z}_{\mathrm{rem}}$ is bounded by
       $C_{\mathrm{rem}} = O(\varepsilon + \sigma^2 M_{C_2})$, which is absorbed into
       $C_{\mathrm{sys}}(\varepsilon + \sigma^2 M_{C_2})$ for $C_{\mathrm{sys}}$ large enough);
\item $|\Delta y_{\mathrm{mart}}(t)| \le \tfrac{\delta}{4e^{M_{DF}T}}$ with probability $\ge 1-p_{\mathrm{mart}}$;
\item $|\mathrm{Rem}(t)| \le \tfrac{\delta}{12e^{M_{DF}T}}$ (from higher-order Taylor remainders
       bounded by $C_{\mathrm{rem}}(\varepsilon + \delta^2)$);
\item $\sum_{\bullet} |\Delta y_\bullet(t)| \le \tfrac{3\delta}{4e^{M_{DF}T}}$.
\end{enumerate}
Applying Gr\"onwall's inequality
$|\Delta y(t)| \le (\sum_\bullet |\Delta y_\bullet|)\,e^{\varepsilon M_{DF}t}$
then gives $\sup_t |\Delta y(t)| \le 3\delta/4 < \delta$,
closing the bootstrap on $\mathcal{G} \cap \mathcal{H}$.
\end{lemma}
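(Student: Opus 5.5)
The plan is a standard continuity (bootstrap) argument. We compare each of the four source terms in the decomposition of $\Delta y$ with the budget $\delta/(4e^{M_{DF}T})$ and then close with Gr\"onwall. The first step is to fix constants. Set $S:=C_{\mathrm{sys}}(\varepsilon+\sigma^2M_{C_2})+C_{\mathrm{fast}}\sigma\sqrt\varepsilon+C_{\mathrm{rem}}$, so that $\delta=2e^{M_{DF}T}S$ and hence $\delta/(4e^{M_{DF}T})=S/2$. We enlarge $C_{\mathrm{sys}}$ by a fixed factor, say by replacing it with $12$ times the constant of Lemma~\ref{lem:sys} plus the constant implicit in $C_{\mathrm{rem}}=O(\varepsilon+\sigma^2M_{C_2})$. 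Every deterministic contribution is then at most a prescribed fraction of $S$.

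The four items are handled as follows.

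\textbf{(a)} By Lemma~\ref{lem:sys}, $\sup|\Delta y_{\mathrm{sys}}|\le \tfrac{TK_AM_{D_xg}}{\lambda_0}\max(M_CM_g,\tfrac{M_h^2}{2})(\varepsilon+\sigma^2M_{C_2})$, which is at most $S/12$ after the enlargement of $C_{\mathrm{sys}}$.

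\textbf{(b)} On $\mathcal G\cap\mathcal H$, Lemma~\ref{lem:fast} with Lemma~\ref{lem:timeavg} gives $|\Delta y_{\mathrm{fast}}|\le C_{\mathrm{fast}}\sigma\sqrt\varepsilon+C_{\mathrm{rem}}$ outside an event of probability $p_{\mathrm{fast}}$. To obtain the sharper fraction $\delta/(6e^{M_{DF}T})=S/3$, we apply Lemma~\ref{lem:fast} with constant $C_{\mathrm{fast}}/3$. This only rescales $\kappa_{\mathrm{fast}}$, which remains arbitrary. The $C_{\mathrm{rem}}$ piece is absorbed into the enlarged $C_{\mathrm{sys}}$ term.

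\textbf{(c)} Lemma~\ref{lem:mart} applies verbatim on the bootstrap event $\{\sup|\Delta y|\le\delta\}$, since $h$ is Lipschitz. This yields failure probability $p_{\mathrm{mart}}=2e^{-C_{\mathrm{mart}}}$.

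\textbf{(d)} $\mathrm{Rem}(t)$ collects the second-order Taylor terms of $F(y)-F(y_{\mathrm{ad}})$ and of $g(X^*+z,y)-g_0(y)-D_xg\,z$, integrated against $\varepsilon\,dt$ over $[0,T/\varepsilon]$. It is therefore bounded by $T(C\delta^2+C_g r_*^2)$. The term $r_*^2$ is $O(D_0/\varepsilon)\cdot O(\varepsilon)$-small in the relevant sense, i.e.\ it is of order $\varepsilon+\sigma^2M_{C_2}$ plus $\sigma^2$, and $\sigma^2\le C_0^2\varepsilon$. The term $\delta^2$ is at most $\delta\cdot\delta$, and $\delta\to0$ as $\varepsilon\to0$. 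Choosing $\varepsilon_1$ so that $CT\delta\le 1/(24e^{M_{DF}T})$ makes this contribution at most $\delta/(12e^{M_{DF}T})$.

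\textbf{(e)} Summing the fractions, $S/12+S/3+S/2+S/6$ (the last being $\delta/(12e^{M_{DF}T})$), is at most $3\delta/(4e^{M_{DF}T})$ after slight rebalancing of the numerical fractions.

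Gr\"onwall applied to $|\Delta y(t)|\le \Sigma+\varepsilon M_{DF}\int_0^t|\Delta y|$ then gives $|\Delta y(t)|\le \Sigma e^{M_{DF}T}\le 3\delta/4$ for $t\le T/\varepsilon$.

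The assumption $\sup|\Delta y|\le\delta$ is used in (c) and (d). To make the argument non-circular, I would run it with the stopping time $\tau_\delta=\inf\{t:|\Delta y(t)|\ge\delta\}\wedge T/\varepsilon$. On $[0,\tau_\delta]$ all the estimates hold, and they give $|\Delta y|\le3\delta/4$ there. By path continuity, $\tau_\delta<T/\varepsilon$ would force $|\Delta y(\tau_\delta)|=\delta$, which is a contradiction. Hence $\tau_\delta=T/\varepsilon$, and the bootstrap hypothesis is self-consistently removed.

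The main obstacle I expect is (d) together with the justification that the estimates of Lemmas~\ref{lem:fast} and~\ref{lem:mart} may be applied to processes stopped at $\tau_\delta$. For Lemma~\ref{lem:mart} this is easy: replace $\Delta y$ by $\Delta y(\cdot\wedge\tau_\delta)$ in the quadratic-variation bound. For (d), one must verify that the Taylor remainder really is quadratic in $\delta$ and $r_*$ with constants independent of $\varepsilon$. Here I would invoke $g\in C^2$ and $X^*\in C^3$ on the compact $K$, and extend to a neighbourhood using the smooth extension of $X^*$.
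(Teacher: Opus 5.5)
Your architecture matches the paper's: split $\Delta y$ into four pieces, give each a fixed fraction of $\delta/e^{M_{DF}T}$, apply Gr\"onwall, and close with $\tau_\delta$. The paper uses the same stopping-time device in the subsequent adiabatic-error theorem. Your enlargement of $C_{\mathrm{sys}}$ in (a) and rescaling of $C_{\mathrm{fast}}$ in (b) are genuinely needed. The reason is that $\delta$ is homogeneous of degree one in the small quantities, so ``$\varepsilon,\sigma$ sufficiently small'' cannot produce those fractions by itself. Two small caveats. First, $C_{\mathrm{fast}}\to C_{\mathrm{fast}}/3$ divides $\kappa_{\mathrm{fast}}$ by $9$. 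If you want the literal $p_{\mathrm{fast}}$, use $\sigma\sqrt{\varepsilon}\le C_0\varepsilon$ and put a multiple of $C_0C_{\mathrm{fast}}$ into $C_{\mathrm{sys}}$ instead. Second, your fractions already sum to $13S/12<3S/2$, so no rebalancing is required.

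The step that fails is (d). You bound the $R_g$ part of $\mathrm{Rem}$ by $TC_g r_*^2$ using the pointwise tube, and then claim $r_*^2=O(\varepsilon+\sigma^2M_{C_2}+\sigma^2)$. However, the radius in $\mathcal{G}$ is the one for which Lemma~\ref{lem:concentration} proves its bound, namely $r_*^2=2D_0T/(c_1\varepsilon)$. Self-consistency only gives $D_0/\varepsilon=O(1)$, so $r_*=O(1)$, not small. Since $D_0\ge\sigma^2c_2M_C^2M_h^2$, your bound is at least
\[
TC_g r_*^2\;\ge\;\frac{2T^2C_gc_2M_C^2M_h^2}{c_1}\cdot\frac{\sigma^2}{\varepsilon}.
\]
By contrast, $\delta\asymp\varepsilon$ throughout the admissible range, because $\sigma\sqrt{\varepsilon}\le C_0\varepsilon$ and $\sigma^2M_{C_2}\le C_0^2M_{C_2}\varepsilon$. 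Your estimate therefore misses by a factor of order $\sigma^2/\varepsilon^2$ whenever $\sigma\gg\varepsilon$, which is a factor $1/\varepsilon$ at $\sigma\sim C_0\sqrt{\varepsilon}$. The formula for $r_*$ displayed in Theorem~\ref{thm:main}(a) does not rescue this: its $\sigma$-term is smaller than what Lemma~\ref{lem:concentration} yields by a factor of order $\sqrt{\varepsilon}$.

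The missing idea is to control the quadratic remainder through the time integral of $V$ rather than through $\sup|z|$. This is exactly why $\mathcal{H}$ is among the hypotheses. On $\mathcal{H}$,
\[
\varepsilon C_g\int_0^{T/\varepsilon}|z(s)|^2\,ds\;\le\;\frac{\varepsilon C_g L}{c_1}\;=\;\frac{3C_g\bigl(\varepsilon V(0)+D_0T\bigr)}{c_1\gamma_V}\;=\;O(\varepsilon+\sigma^2M_{C_2}).
\]
This is the same device the paper uses for $\Delta y_{\mathrm{rem}}$ in Step~5 of Lemma~\ref{lem:fast}. The paper's own one-line proof of (d) asserts an $O(\varepsilon+\delta^2)$ bound ``on $\mathcal{G}$'' without this step, so you should make it explicit.

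Note that this recovers the missing factor $\varepsilon$ but nothing more: the bound is of the same order as $\delta$. It must therefore be absorbed by a constant, not by shrinking $\varepsilon_1$. For example, add $3C_gT/(c_1\gamma_V)$ times the implied constant in $D_0=O(\varepsilon+\sigma^2M_{C_2})$ to your enlarged $C_{\mathrm{sys}}$. Only the $CT\delta^2$ part of $\mathrm{Rem}$ is genuinely disposed of by taking $\varepsilon\le\varepsilon_1$.
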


\begin{proof}[Proof of Lemma~\ref{lem:bootstrap}]
Item (a) is immediate from Lemma~\ref{lem:sys}.
Item (b): from Lemma~\ref{lem:fast}, the fast fluctuation decomposes as
$\Delta y_{\mathrm{fast}} = \Delta y_{\mathrm{lin}} + \Delta y_{\mathrm{rem}}$, where
$\Delta y_{\mathrm{lin}}$ is bounded by $C_{\mathrm{fast}}\sigma\sqrt{\varepsilon}$ with
probability $\ge 1 - p_{\mathrm{fast}}$, and $\Delta y_{\mathrm{rem}}$ is bounded by
$C_{\mathrm{rem}} = O(\varepsilon + \sigma^2 M_{C_2})$ on $\mathcal{G} \cap \mathcal{H}$
(Lemma~\ref{lem:timeavg}). By construction of $\delta$, both terms are
$\le \delta/(12e^{M_{DF}T})$ for $\varepsilon, \sigma$ sufficiently small, so their
sum is $\le \delta/(6e^{M_{DF}T})$.
Item (c) follows from Lemma~\ref{lem:mart} with the choice
$C_{\mathrm{mart}}\sigma\sqrt{\varepsilon} \le \delta/(4e^{M_{DF}T})$,
which holds for $\sigma \le C_0\sqrt{\varepsilon}$ since $\delta
\ge 2e^{M_{DF}T}C_{\mathrm{fast}}\sigma\sqrt{\varepsilon}$.
Item (d): on $\mathcal{G}$ the Taylor remainders satisfy
$|\mathrm{Rem}(t)| \le CT\cdot M_{DG}(\varepsilon + \delta^2)$ for a
constant $M_{DG}$.
For $\varepsilon \le \varepsilon_1$ (with $\varepsilon_1$ chosen so that
$C_{\mathrm{rem}}(\varepsilon_1 + \delta^2) \le \delta/(12e^{M_{DF}T})$),
this is $\le \delta/(12e^{M_{DF}T})$.
The sum bound (e) follows by adding (a)--(d).
\end{proof}

\begin{theorem}[Adiabatic slow-variable error]
Under the conditions of Theorem~\ref{thm:main},
\[
\PP\!\Bigl(\sup_{t \le T/\varepsilon}|\Delta y(t)|
\ge \delta \;\text{ or }\; \tau_K \le T/\varepsilon\Bigr)
\le e^{-\kappa/\varepsilon} + e^{-C_K} + p_{\mathrm{fast}} + p_{\mathrm{mart}},
\]

where
\[
\delta :=
2\,e^{M_{DF}T}\,\bigl(C_{\mathrm{sys}}(\varepsilon + \sigma^2 M_{C_2})
+ C_{\mathrm{fast}}\sigma\sqrt{\varepsilon}\bigr).
\]
\end{theorem}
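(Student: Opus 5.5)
The plan is to assemble the final statement from the lemmas of Sections~\ref{sec:exit}--\ref{sec:slow} by a union bound over the failure events, combined with a continuity (stopping-time) version of the bootstrap of Lemma~\ref{lem:bootstrap}. First I will introduce the good events. The event $\mathcal{G}$ (path concentration together with $\tau_K > T/\varepsilon$) fails with probability at most $e^{-\kappa/\varepsilon}+e^{-C_K}$, by Lemma~\ref{lem:exit_K} and Lemma~\ref{lem:concentration}, using $r_*\le R$ under $\sigma\le\sigma_c$. The event $\mathcal{H}$ fails on $\{\tau>T/\varepsilon\}$ with probability at most $2e^{-\kappa'/\varepsilon}$ (Lemma~\ref{lem:timeavg}); after replacing $\kappa$ by a slightly smaller $\min(\kappa,\kappa')$, this is absorbed into $e^{-\kappa/\varepsilon}$ for $\varepsilon$ small. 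Let $\mathcal{F}$ be the event where $\sup|\Delta y_{\mathrm{lin}}|\le C_{\mathrm{fast}}\sigma\sqrt{\varepsilon}$; it fails with probability at most $p_{\mathrm{fast}}$ by Lemma~\ref{lem:fast}. Finally, let $\mathcal{M}$ be the martingale event of Lemma~\ref{lem:mart}, which fails with probability at most $p_{\mathrm{mart}}$.

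Because the $\delta$ in the present statement omits $C_{\mathrm{rem}}$, I will first note that $C_{\mathrm{rem}}=O(\varepsilon+\sigma^2M_{C_2})$ under $\sigma\le\sigma_c$. Enlarging $C_{\mathrm{sys}}$ by a constant therefore absorbs it, so the two definitions of $\delta$ agree up to constants.

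Lemma~\ref{lem:mart} is stated only on the bootstrap event $\{\sup|\Delta y|\le\delta\}$, so I cannot use it directly. Instead I will define the stopping time
\[
\tau_\delta=\inf\{t:|\Delta y(t)|\ge\delta\}\wedge T/\varepsilon ,
\]
and apply the exponential martingale inequality to the stopped martingale $\Delta y_{\mathrm{mart}}(\cdot\wedge\tau_\delta)$. Its quadratic variation is then deterministically bounded by $\sigma^2L_h^2\delta^2T/\varepsilon$, so the estimate of Lemma~\ref{lem:mart} holds unconditionally for the stopped process. The same stopping handles item (d) of Lemma~\ref{lem:bootstrap}, where the Taylor remainder bound uses $|\Delta y|\le\delta$.

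On $\mathcal{G}\cap\mathcal{H}\cap\mathcal{F}\cap\mathcal{M}$, for $t\le\tau_\delta$, the decomposition lemma and items (a)--(e) of Lemma~\ref{lem:bootstrap} bound the forcing terms by $3\delta/(4e^{M_{DF}T})$. Gr\"onwall's inequality then gives $|\Delta y(t)|\le 3\delta/4$ on $[0,\tau_\delta]$. By path continuity, if $\tau_\delta<T/\varepsilon$ we would have $|\Delta y(\tau_\delta)|=\delta$, a contradiction. Hence $\tau_\delta=T/\varepsilon$ and $\sup_{t\le T/\varepsilon}|\Delta y|\le 3\delta/4<\delta$ on this intersection. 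The complement of the target event is therefore contained in $\mathcal{G}^c\cup\mathcal{H}^c\cup\mathcal{F}^c\cup\mathcal{M}^c$, and the union bound gives the stated probability.

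The main obstacle is making the conditional statements rigorous. Lemma~\ref{lem:fast} conditions on the whole slow path $y(\cdot)$, which is not adapted and is correlated with $W$ through $\tilde z$. I would try to recast $\Delta y_{\mathrm{lin}}$ via a Fubini/integration-by-parts identity, writing $\varepsilon\int D_xg\,\hat\Phi\,\Sigma_z\,dW$ as a stochastic integral with an adapted, uniformly bounded integrand on $[0,\tau]$. A Bernstein-type martingale inequality would then give the $\sigma\sqrt{\varepsilon}$ tail directly, bypassing the Gaussian conditioning, at the cost of changing only the constant in $\kappa_{\mathrm{fast}}$.
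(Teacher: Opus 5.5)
Your proposal follows the paper's own proof: stop at $\tau_\delta$, close the bootstrap of Lemma~\ref{lem:bootstrap} on $\mathcal{G}\cap\mathcal{H}$ outside the fast and martingale exceptional sets, and take a union bound, absorbing $\mathcal{H}^c$ by shrinking $\kappa$; your explicit stopping of $\Delta y_{\mathrm{mart}}$ at $\tau_\delta$ (which removes the circularity in Lemma~\ref{lem:mart}) and your reconciliation of the two definitions of $\delta$ are careful versions of steps the paper only asserts. On your proposed repair of Lemma~\ref{lem:fast}, note that a stochastic Fubini exchange alone leaves the anticipating kernel $\varepsilon\int_r^t D_xg(y(s))\hat\Phi(s,r)\,ds$, so the adapted representation has to come from the It\^o product rule applied to $D_xg\,\hat{A}(y)^{-1}\tilde{z}_{\mathrm{lin}}$. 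That yields the martingale $-\varepsilon\int D_xg\,\hat{A}^{-1}\Sigma_z\,dW$, with quadratic variation $O(\varepsilon\sigma^2 T)$, plus boundary and It\^o-correction terms of order $\varepsilon+\sigma^2$.
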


\begin{proof}
Define $\tau_\delta = \inf\{t > 0 : |\Delta y(t)| > \delta\}$ and work on
$[0, T/\varepsilon \wedge \tau_\delta \wedge \tau]$. The bootstrap
(Lemma~\ref{lem:bootstrap}) closes on $\mathcal{G} \cap \mathcal{H}$ (where
$\mathcal{H}$ is from Lemma~\ref{lem:timeavg}), giving
$\sup_{t}|\Delta y(t)| \le 3\delta/4 < \delta$ on the good event. Hence
$\PP(\tau_\delta \le T/\varepsilon,\, \mathcal{G} \cap \mathcal{H})
\le p_{\mathrm{fast}} + p_{\mathrm{mart}}$. The probability that
$\mathcal{G}$ fails (path deconcentration or $y$-exit) is bounded by
$e^{-\kappa/\varepsilon} + e^{-C_K}$
(Lemmas~\ref{lem:concentration} and~\ref{lem:exit_K}).
By Lemma~\ref{lem:timeavg}(a), $\PP(\mathcal{H}^c)
\le 2e^{-\kappa'/\varepsilon} + e^{-\kappa/\varepsilon} + e^{-C_K}$,
which is absorbed into the existing exponential bounds by possibly
replacing $\kappa$ with $\kappa'' := \min(\kappa,\kappa')$ (and adjusting
the universal prefactor by a factor of $4$). The union bound
$\PP(\tau_\delta \le T/\varepsilon \text{ or } \mathcal{G}^c\cup\mathcal{H}^c)
\le \PP(\tau_\delta \le T/\varepsilon,\, \mathcal{G}\cap\mathcal{H}) +
\PP(\mathcal{G}^c) + \PP(\mathcal{H}^c)$ gives the stated result.
\end{proof}

\bigskip\noindent\textbf{Proof of Theorem~\ref{thm:main}}.
Part (a) is Lemma~\ref{lem:concentration} combined with
Lemma~\ref{lem:exit_K} and the union bound
$\PP(\tau_R \wedge \tau_K \le T/\varepsilon)
\le e^{-\kappa/\varepsilon} + e^{-C_K}$,
with $r_* \le C(\varepsilon/\lambda_0 + \sigma^2M_{C_2}M_h^2/(2\lambda_0)
+ \sigma M_C M_h/\sqrt{\lambda_0})$. Part (b) is the slow-variable
error theorem above. \hfill\qedsymbol

\section{Discussion}
\label{sec:discussion}

\subsection{\texorpdfstring{Phase diagram in $(\varepsilon,\sigma)$ space}{Phase diagram in the epsilon-sigma plane}}

The sufficient regime ensuring path concentration in the $(\varepsilon,\sigma)$ plane is described by the two branches
of $\sigma_c(\varepsilon)$ in \eqref{eq:sigmac}. Figure~1
shows a schematic phase diagram.

\begin{figure}[ht]
\centering
\begin{tikzpicture}[>=latex]
  
  \def\xmax{6} \def\ymax{3.25}

  \foreach \x in {0,1,2,3,4,5,6}
    {\draw[gray!15, thin] (\x,0) -- (\x,\ymax);}
  \foreach \y in {0,0.5,1,1.5,2,2.5,3}
    {\draw[gray!15, thin] (0,\y) -- (\xmax,\y);}

  \fill[blue!14] (0,0) -- (0,1) -- (4,2) -- cycle;

  \draw[->, thick] (-0.05,0) -- (\xmax,0)
    node[right] {$-\log\varepsilon$};
  \draw[->, thick] (0,-0.05) -- (0,\ymax)
    node[above] {$-\log\sigma$};

  \foreach \x in {1,2,3,4,5,6}
    {\draw[thick] (\x,0.05) -- (\x,-0.05)
       node[below, font=\footnotesize] {\x};}

  \foreach \y in {0.5,1,1.5,2,2.5,3}
    {\draw[thick] (0.05,\y) -- (-0.05,\y)
       node[left, font=\footnotesize] {\y};}

  \draw[blue, dashed, very thick] (0,0) -- (6,3);

  \draw[red, solid, very thick] (0,1) -- (6,2.5);

  \filldraw[black] (4,2) circle (1.8pt);
\end{tikzpicture}
\caption{Phase diagram of the sufficient concentration regime given by $\sigma_c(\varepsilon)$ in $(-\log\varepsilon,-\log\sigma)$-space, drawn for a fixed manifold geometry (i.e., fixed constants $C_0$ and $L_{\mathrm{geom}}$). 
\textbf{Blue dashed}: classical threshold $-\log\sigma = \tfrac12(-\log\varepsilon) - \log C_0$.
\textbf{Red solid}: curvature-dependent threshold $-\log\sigma = \tfrac14(-\log\varepsilon) - \log(C_0 L_{\mathrm{geom}}/\sqrt{\lambda_0})$.
\textbf{Shaded}: regime where curvature provides a strictly tighter upper bound on the allowable noise. (Higher values on the vertical axis represent strictly smaller noise $\sigma$.)}
\label{fig:phase}
\end{figure}

\subsection{When does curvature matter?}
For a single fixed manifold ($M_{C_2}$, $M_h$ constant), the
curvature branch of $\sigma_c$ is active when
\[
\varepsilon^{1/4}\frac{L_{\mathrm{geom}}}{\sqrt{\lambda_0}}
< \sqrt{\varepsilon}
\;\;\Longleftrightarrow\;\;
M_{C_2}\,M_h^2 > \varepsilon^{-1/2}.
\]
This is difficult to realise for a single manifold, since
$\varepsilon^{-1/2}$ is large. The curvature threshold is more
naturally activated in a family of manifolds parametrised
by a geometric shape parameter $\delta \to 0$ for which
$M_{C_2}(\delta) \to \infty$, while the stability index
$\lambda_0$ and the Lyapunov constants $c_1$, $c_2$, $M_{DP}$,
$M_{D^2P}$ remain bounded. This is the two-parameter regime
analysed in Example~\ref{ex:sigmoid}.

This is complementary to the theory of \citet{BerglundGentz2003,BerglundGentz2006}, 
which captures the regime $\lambda_0 \to 0$ near fold bifurcations
(where $\sigma_c \sim \varepsilon^{1/4}$ but the mechanism is
the vanishing of the spectral gap, not curvature). Our analysis
captures the regime $\|D^2X^*\| \to \infty$ with $\lambda_0 > 0$
fixed, in the joint limit $(\varepsilon, \delta) \to (0,0)$ with
$\varepsilon^{1/2} \ll \delta < \varepsilon^{1/4}$.

While the curvature-dominated regime requires a parametrically
large second derivative $\|D^2X^*\|$, such conditions arise
naturally in systems with sharp transition layers, sigmoidal
response functions, or highly nonlinear coupling between slow
and fast variables. Even when the curvature branch is not
strictly binding, the curvature term
$\sigma^2\|D^2X^*\|\|h\|^2/(2\lambda_0)$ in the path-concentration
bound~\eqref{eq:path_conc} provides a systematic refinement over the
classical estimate $\sigma M_C M_h/\sqrt{\lambda_0}$, quantifying
the effect of manifold geometry on the fast-variable spread.

\subsection{Concrete example: sigmoid slow manifold}
\label{sec:example}

We exhibit a family of systems, parametrised by a geometric
parameter $\delta \to 0$, for which $\|D^2X^*\|$ diverges while
$\lambda_0$ remains bounded away from zero. For each fixed member of the family, Theorem~\ref{thm:main} applies with an admissibility constant $C_0(\delta)$ determined by the corresponding system parameters. The analysis below tracks the dependence of these constants on $\delta$, verifying that the curvature branch becomes actively binding and all probability bounds survive the joint limit $(\varepsilon, \delta) \to (0,0)$.

\begin{example}[Steep sigmoid transition -- two-parameter analysis]
\label{ex:sigmoid}
Consider $x \in \RR, y \in \RR$ with
\[
dx = (-x + \tanh(y/\delta))\,dt, \qquad
dy = \varepsilon(-y)\,dt + \sigma\,dW_t,
\]
where $\delta \ll 1$ controls the sharpness of the transition.
The slow manifold is $X^*(y) = \tanh(y/\delta)$ and the fast
Jacobian is $A(y) = D_x f = -1$.

\medskip\noindent\textbf{Constants and their $\delta$-scaling.}
\begin{itemize}
\item $A(y) = -1$ is constant, so $\lambda_0 = 1$, $K_A = 1$,
      and $DA(y) = D^2A(y) \equiv 0$. Since $A$ does not depend
      on $y$, the Lyapunov equation $A^T P + PA = -I$ has the
      explicit solution $P(y) = \tfrac12 I$, yielding
      $c_1 = c_2 = \tfrac12$. Crucially, $DP = D^2P = 0$, so
      $M_{DP} = M_{D^2P} = 0$ for all $\delta > 0$.
\item $X^{*\prime}(y) = \delta^{-1}\operatorname{sech}^2(y/\delta)$,
      so $M_C = 1/\delta$.
      $X^{*\prime\prime}(y) = -2\delta^{-2}\tanh(y/\delta)\operatorname{sech}^2(y/\delta)$,
      so $M_{C_2} = \Theta(1/\delta^2)$ (attained at $y/\delta = \pm 1/\sqrt{2}$).
\item $g(x,y) = -y$, so $M_g = R$, $M_{D_xg} = 0$. $h(y) = 1$, so $M_h = 1$.
\item Compact domain $K = [-R,R]$; confinement is trivially
      satisfied since $g(X^*(y),y) = -y$ is inward-pointing on $\partial K$.
\end{itemize}

\medskip\noindent\textbf{Geometric scale and critical noise.}
$L_{\mathrm{geom}} = \sqrt{\lambda_0/(M_{C_2}M_h^2)} = \Theta(\delta)$, giving
\[
\sigma_c^{\mathrm{curv}}(\varepsilon) \sim \varepsilon^{1/4}\delta, \qquad
\sigma_c^{\mathrm{class}}(\varepsilon) \sim \sqrt{\varepsilon}.
\]
The curvature branch is strictly tighter when
$\varepsilon^{1/4}\delta < \sqrt{\varepsilon}$, i.e.\
$\delta < \varepsilon^{1/4}$.

\medskip\noindent\textbf{Branch-dependent thresholds.}

The self-consistency condition
\[
\frac{D_0}{\varepsilon}=O(1)
\]
imposes an additional restriction on the noise level.
Since
\[
M_C=\Theta(\delta^{-1}),
\]
the classical diffusion-energy contribution
\[
\frac{\sigma^2 c_2 M_C^2 M_h^2}{\varepsilon}
\]
remains bounded only if
\[
\sigma \lesssim \delta\sqrt{\varepsilon}.
\]

Combining this with
\[
L_{\mathrm{geom}}=\Theta(\delta),
\]
the admissible noise scale becomes
\[
\sigma
\lesssim
\min\!\bigl(
\delta\sqrt{\varepsilon},
\,
\delta^2\varepsilon^{1/4}
\bigr).
\]

Thus the effective thresholds are
\[
\sigma_c^{\mathrm{class}}
=
\Theta(\delta\sqrt{\varepsilon}),
\qquad
\sigma_c^{\mathrm{curv}}
=
\Theta(\delta^2\varepsilon^{1/4}).
\]

The curvature branch is the active constraint when
\[
\delta^2\varepsilon^{1/4}
<
\delta\sqrt{\varepsilon},
\]
equivalently
\[
\delta<\varepsilon^{1/4}.
\]

\medskip\noindent\textbf{Self-consistency at the curvature threshold.}
Set $\sigma = \sigma_c^{\mathrm{curv}} = C\delta^2\varepsilon^{1/4}$
for an appropriate constant $C > 0$. We verify $D_0/\varepsilon = O(1)$.

Since $M_{DP} = 0$, the cross-It\^o correction in $D_0$ vanishes
and $\tilde\lambda = 1$. Thus
$D_0 = \sigma^2 c_2 M_C^2 M_h^2 + 4(c_2\beta)^2$
with $\beta = \varepsilon M_C M_g + \tfrac{\sigma^2}{2}M_{C_2}M_h^2
= \Theta(\varepsilon/\delta + \delta^2\varepsilon^{1/2})$.

The diffusion-energy term:
$\sigma^2 c_2 M_C^2 M_h^2/\varepsilon = \Theta(\delta^4\varepsilon^{1/2}\cdot\delta^{-2}/\varepsilon)
= \Theta(\delta^2\varepsilon^{-1/2})$. This is $O(1)$ iff
$\delta \le \varepsilon^{1/4}$, which holds in the curvature-active
regime.

The curvature-bias term:
$\beta^2/\varepsilon = \Theta(\varepsilon/\delta^2 + \varepsilon^{1/2}\delta + \delta^4)$.
This vanishes as $(\varepsilon,\delta) \to 0$ provided
$\varepsilon/\delta^2 \to 0$, i.e.\ $\delta \gg \sqrt{\varepsilon}$.

The curvature branch is therefore self-consistent in the
joint regime
\[
\sqrt{\varepsilon} \ll \delta < \varepsilon^{1/4},
\]
which is non-empty since $\varepsilon^{1/2} < \varepsilon^{1/4}$ for
all $\varepsilon < 1$.

\medskip\noindent\textbf{Exponential rate $\kappa$.}
With $M_{DP} = 0$, the quadratic-variation constant is
$\eta^2 = (2\sigma^2/c_1)\cdot 4c_2^2 M_C^2 M_h^2 = 4\sigma^2/\delta^2
= \Theta(\delta^2\varepsilon^{1/2})$. The effective rate
$\gamma_V = \tilde\lambda/(2c_2) = 1$, so
$\alpha = \gamma_V/\eta^2 = \Theta(\delta^{-2}\varepsilon^{-1/2})$.
The leading contribution to the exponent is
\[
\alpha\cdot\frac{\sigma^2 M_C^2}{\varepsilon}\cdot T
= \Theta(\delta^{-2}\varepsilon^{-1/2})\cdot\Theta(\delta^2\varepsilon^{-1/2})\cdot T
= \Theta(T/\varepsilon),
\]
so $\kappa = \Theta(1)$, independent of both $\varepsilon$ and
$\delta$. The initial-condition term
$\alpha c_2|z_0|^2 = \Theta(\delta^{-2}\varepsilon^{-1/2}\cdot\varepsilon^2)
= O(\varepsilon^{3/2}/\delta^2) \to 0$ since $\delta \gg \varepsilon^{1/2}$.
Hence $\PP \le \exp(-\kappa/\varepsilon) \to 0$ as $\varepsilon \to 0$,
uniformly for all $\delta$ in the admissible range
$\varepsilon^{1/2} \ll \delta < \varepsilon^{1/4}$.

\medskip\noindent\textbf{Numerical regime.}
For $\varepsilon = 10^{-8}$ and $\delta = 0.01$:
$\varepsilon^{1/4} = 10^{-2} = \delta$, so the two thresholds are
comparable ($\sigma_c^{\mathrm{class}} \sim \sigma_c^{\mathrm{curv}}$).
The curvature branch is strictly dominant for
$\delta \ll \varepsilon^{1/4}$. For example, with $\varepsilon = 10^{-12}$
and $\delta = 10^{-4}$: $\varepsilon^{1/4} = 10^{-3} \gg \delta$,
$\sqrt{\varepsilon} = 10^{-6} \ll \delta$, confirming
$\delta$ lies in the valid joint regime. Such steep sigmoidal
transitions arise in gene-regulatory switches, kinase cascades,
and strongly saturating chemical reactions.
\end{example}

\paragraph{Scope of Example~\ref{ex:sigmoid} and the non-constant $A(y)$ case.}
Example~\ref{ex:sigmoid} uses $A(y) \equiv -1$, which forces
$M_{DP} = M_{D^2P} = 0$ and thereby eliminates the cross-It\^o
corrections that govern the general constant chain. For a
non-constant $A(y)$, the bounds $M_{DP}$ and $M_{D^2P}$
(Appendix~\ref{sec:appendix}) generically grow with the
manifold curvature, and controlling their growth simultaneously
with $\|D^2X^*\| \to \infty$ requires additional uniformity
conditions not covered by the present theorem. A sufficient condition for the two-parameter analysis to extend to families with non-constant $A(y)$ is that the growth of $\|DA(y)\|$ (and hence of $M_{DP}$ and $M_{D^2P}$)
remains dominated by that of $\|D^2X^*(y)\|$, e.g.\
$\|DA(y)\| = o(\|D^2X^*(y)\|)$ as the curvature parameter
tends to infinity. Under such a condition, the curvature
terms in $D_0$ continue to dictate the threshold, while
the cross-It\^o corrections proportional to $M_{DP}$ and
$M_{D^2P}$ remain of lower order and do not destroy the
Foster--Lyapunov gap $\tilde\lambda > 0$. Verifying this
condition for specific classes of slow manifolds is a
natural next step.
Example~\ref{ex:sigmoid} therefore serves as a proof of concept
demonstrating that the curvature-dominated regime is
mathematically non-empty; extending the two-parameter analysis
to systems with non-constant $A(y)$ is a natural direction for
future work. This limitation is complementary to the near-bifurcation theory of \citet{BerglundGentz2003, BerglundGentz2006}: their analysis
captures $\lambda_0 \to 0$. The present paper captures
$\|D^2X^*\| \to \infty$ with $\lambda_0 > 0$ fixed; the
intersection of both large curvature and small spectral gap lies
beyond the scope of both works and remains open.

\begin{remark}[Hill-type regulatory functions]
The sigmoid $X^*(y) = \tanh(y/\delta)$ in Example~\ref{ex:sigmoid}
is the limiting profile of Hill-type regulatory functions
$H(y) = y^n/(K^n + y^n)$ as the cooperativity $n \to \infty$. Such
switch-like behaviour appears in gene regulation, kinase cascades,
and synthetic biology. For a Hill function with finite Hill
coefficient $n$ and half-saturation $K$, one has $X^{*\prime\prime}(K)
= \Theta(n^2/K^2)$, so $M_{C_2} \sim n^2$ and $L_{\mathrm{geom}} \sim
1/n$. The curvature threshold becomes
$\sigma_c^{\mathrm{curv}} \sim \varepsilon^{1/4}/n$, whereas
$\sigma_c^{\mathrm{class}} \sim \varepsilon^{1/2}$; the curvature
branch is active when $n \gg \varepsilon^{-1/4}$. Although this
requires a very high cooperativity (or a very steep transition
layer in the limit), the example demonstrates a mathematically
well-defined regime where the geometrically refined fast-variable
bound is active, complementing the classical Berglund--Gentz
threshold which depends only on the spectral gap $\lambda_0$.
\end{remark}

\subsection{Physical motivation}
Systems with noise only on the slow variable naturally arise
when:
\begin{itemize}
\item A fast chemical reaction equilibrates rapidly, while
a slow environmental parameter (temperature, pH) undergoes
stochastic fluctuations.
\item Neural or biophysical models with deterministic fast
gating kinetics and stochastically driven slow recovery
variables \citep{NewbyBressloffKeener2013}. Recent work by
\citet{Wu2026} on excitable fast--slow systems with 
multiplicative Feller noise on the slow variable identifies the 
practical boundary where the adiabatic approximation breaks down
numerically.
\item Climate models where fast atmospheric dynamics are
slaved to slowly varying, stochastically perturbed oceanic
or ice-sheet parameters.
\end{itemize}

\subsection{Practical value of tightened fast-variable bounds}
\label{sec:pr-value}
While Theorem~\ref{thm:main}(b) establishes that curvature does not
improve the slow-variable adiabatic error, the curvature-refined
fast-variable bound in part~(a) has independent practical value in
several contexts. First, when the fast variable itself is the
observable of interest, such as fast ionic currents in
neural gating models or rapidly equilibrating concentrations in
enzyme-kinetics, a tighter track of $x$ around $X^*(y)$ directly
limits measurement-prediction error. Second, in feedback control
of multi-scale systems, the actuator may depend on the
fast-variable state; the curvature-refined bound yields
finer guaranteed stability margins than the classical
$\sigma/\sqrt{\lambda_0}$ tube width. Third, in numerical methods
for stiff SDEs, adaptive time-stepping schemes can exploit the
sharper fast-variable estimate to relax integration tolerances
without sacrificing pathwise accuracy. Finally, the curvature correction may be useful whenever the effective slow manifold exhibits large local second derivatives, regardless of whether these arise from analytic structure or from numerical approximation. In this sense, the geometric correction should not be viewed as a refinement of the slow reduced dynamics, but rather as a fundamental refinement of the concentration tube around the slow manifold itself.

\subsection{Limitations and extensions}
The principal limitations of the present work are:

\textbf{(a)} We assumed $D_x h \equiv 0$. When $h = h(x,y)$ with
$\|D_x h\| \le L_{hx}$ on the concentration set, the diffusion
$\Sigma_z(y,z) = -\sigma DX^* h(X^*+z,y)$ becomes
$z$-dependent. Expanding $h(X^*+z,y) = h(X^*,y) + D_x h \cdot z
+ O(|z|^2)$, the $z$-SDE gains an extra linear drift term
$-\sigma^2 DX^* D_x h \cdot z\,dt$ and the It\^o expansion of
$V = z^TP(y)z$ acquires additional contributions of order
$O(\sigma^2 L_{hx})$ to the drift and
$O(\sigma^2 L_{hx}^2 |z|^2)$ to the quadratic variation
$\langle M_V\rangle$. Specifically, the quadratic variation
bound $\eta^2$ in Lemma~\ref{lem:QV} becomes
\[
\eta^2 = \frac{2\sigma^2}{c_1}\bigl(4c_2^2 M_C^2 M_h^2
+ M_{DP}^2 M_h^2 R^2 + 4c_2^2 M_C^2 L_{hx}^2 R^2\bigr),
\]
and the Foster--Lyapunov drift picks up an extra term
$2c_2\sigma^2 M_C L_{hx}|z|^2$, which is absorbed into the
decay rate provided $\sigma^2 L_{hx} M_C \ll \lambda_0/c_2$.
Under the condition $\sigma L_{hx} \ll \sqrt{\lambda_0}/(K_A M_C)$,
the effective rate $\tilde\lambda$ remains positive and the curvature
term $\sigma^2 M_{C_2} M_h^2/(2\lambda_0)$ in $D_0$ is preserved
unchanged.

However, $\Sigma_z(y,z)$ now depends on $z$, which makes the
quadratic variation $\langle M_V\rangle$ state-dependent. This breaks the
constant-exponent supermartingale construction in
Lemma~\ref{lem:superm}, which relies on $\eta^2$ being a fixed constant.
Extending the present analysis to $x$-dependent noise therefore requires
either (i) a more careful localisation argument that bounds $z$ before
constructing the supermartingale, or (ii) a different Lyapunov function
that absorbs the $z$-dependence in the diffusion. We leave this
extension to future work.

\textbf{(b)} The assumption $f \in C^4$ is required only for
the bound on $D^2P(y)$ (Appendix~\ref{sec:appendix}). A weaker
result using only $DP$ (and hence $f \in C^3$) can be obtained
at the cost of a slightly weaker exponential rate.

\textbf{(c)} Our confinement Assumption~\ref{ass:confinement}
requires the slow flow to be inward-pointing across level sets
of a Lyapunov function $\psi$ on the annulus
$K \setminus \mathrm{int}(K_0)$. This is satisfied
whenever the reduced slow dynamics $\dot{y} = g(X^*(y),y)$ has a
stable equilibrium or limit cycle inside $K_0$ and the level sets
of $\psi$ provide a ``funnel'' toward the attractor. The
confinement constant $C_K$ (Lemma~\ref{lem:exit_K}) is
independent of $\varepsilon$, $\sigma$ and $T$, and can be
made arbitrarily large by enlarging $K$ relative to $K_0$ or by
strengthening $c_\psi$, at the cost of a larger compact domain on
which the uniform normal-hyperbolicity assumption
(Assumption~\ref{ass:NH}) must hold. For unbounded slow dynamics
without confinement, a different approach (e.g., polynomial
Lyapunov functions or ergodicity-based averaging) is needed.

\textbf{(d)} Near bifurcation points ($\lambda_0 \to 0$),
both $\tilde\lambda$ and $c_1$ degenerate, and the
Foster--Lyapunov gap collapses. The near-bifurcation regime
requires the Berglund--Gentz sample-paths technique, which
tracks $y$ through the bifurcation and uses local normal-form
coordinates. Combining both approaches into a single global result,
capable of simultaneously handling large curvature and
vanishing spectral gap, remains an interesting open problem.

\appendix
\section{Bounds on the parameter-dependent Lyapunov matrix}
\label{sec:appendix}

\begin{lemma}\label{lem:DP}
Under Assumption~\ref{ass:NH}, with $A \in C^2(K)$:
\begin{equation}
\|DP(y)\| \le \frac{K_A^4\|DA(y)\|}{2\lambda_0^2} =: M_{DP},
\end{equation}
where $DA(y) = D_y[D_xf(X^*(y),y)]$.
\end{lemma}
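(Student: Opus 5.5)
Differentiate the Lyapunov equation \eqref{eq:lyap} along a direction $u\in\RR^m$, $|u|=1$. Writing $A' := DA(y)[u]$ and $P' := DP(y)[u]$, the product rule gives the Sylvester equation
\[
A(y)^T P' + P' A(y) = -\bigl(A'^T P(y) + P(y) A'\bigr) =: -Q .
\]
Existence and uniqueness of $P'$ follow from the spectral assumption \eqref{eq:lambda}: the spectra of $A^T$ and $-A$ are disjoint. The differentiability of $P$ in $y$ comes from the implicit function theorem applied to the linear map $P\mapsto A^TP+PA$, which is invertible, together with $A\in C^2(K)$. Since $A(y)$ is Hurwitz, the solution has the integral representation
\[
P' = \int_0^\infty e^{A(y)^T s}\, Q\, e^{A(y)s}\,ds,
\]
exactly as for $P$ itself. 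Verifying this representation is routine: differentiate the integrand in $s$ and use the exponential decay from Assumption~\ref{ass:NH} to discard the boundary term at infinity.

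Next I take norms. Assumption~\ref{ass:NH} gives $\|e^{A^T s}\| = \|e^{As}\| \le K_A e^{-\lambda_0 s}$, so
\[
\|P'\| \le \|Q\|\int_0^\infty K_A^2 e^{-2\lambda_0 s}\,ds = \frac{K_A^2}{2\lambda_0}\,\|Q\|.
\]
For the forcing term I use $\|Q\| \le 2\|P(y)\|\,\|A'\| \le 2c_2\|DA(y)\|$. Here $c_2 = K_A^2/(2\lambda_0)$ comes from \eqref{eq:Pbounds}, which itself follows from the same integral with $Q=I$. Combining,
\[
\|P'\| \le \frac{K_A^2}{2\lambda_0}\cdot \frac{K_A^2}{\lambda_0}\,\|DA(y)\| = \frac{K_A^4\|DA(y)\|}{2\lambda_0^2}.
\]
Taking the supremum over unit $u$ gives the operator-norm bound on $DP(y)$ as a map $\RR^m\to\mathrm{Sym}_n$.

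The main point needing care is the constant. A naive estimate gives $\|Q\|\le 2\|P\|\|A'\|$, and this already produces exactly $K_A^4/(2\lambda_0^2)$ with no extra factor of 2 to spare. So I must check that $\|P\|\le c_2$ uses the sharp constant $K_A^2/(2\lambda_0)$, and not a cruder one. I must also check that the norm convention on $DP$, namely the induced norm over unit directions $u$ as for $D^2X^*$, matches the one used in Lemma~\ref{lem:foster}.

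A secondary subtlety is that $DA$ involves $D_x^2 f\cdot DX^* + D_y D_x f$, evaluated along $X^*(y)$. Finiteness of $\sup_K\|DA\|$, and hence of $M_{DP}$, therefore uses $f\in C^2$, $X^*\in C^1$ and compactness of $K$. I would state this explicitly rather than bound $\|DA\|$ further.
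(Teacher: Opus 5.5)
Your proof is correct and follows the paper's argument: you differentiate the Lyapunov equation to obtain a Sylvester equation for the derivative of $P$, bound the inverse Sylvester operator by $K_A^2/(2\lambda_0)$ via the integral representation, and bound the forcing term by $2c_2\|DA\|$. The differences are cosmetic. You work with directional derivatives $DP(y)[u]$ and take the supremum over unit $u$, which gives the induced operator norm directly, whereas the paper bounds each coordinate partial $D_kP$. Also, your sign in $P'=\int_0^\infty e^{A^Ts}Qe^{As}\,ds$ is the correct one; the paper's stated solution formula for $A^TX+XA=Y$ drops a minus sign, which does not affect the norm estimate.
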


\begin{proof}
Differentiate $A^T P + PA = -I$ with respect to $y_k$:
$(D_kA)^T P + A^T(D_kP) + (D_kP)A + P(D_kA) = 0$.
This is the Sylvester equation
$\mathcal{L}_{A^T,A}(D_kP) = -[(D_kA)^TP + P(D_kA)]$. The solution of $A^T X + XA = Y$ is $X = \int_0^\infty e^{A^T s} Y e^{As}\,ds$,
so taking operator norms:
$\|X\| \le \int_0^\infty \|e^{As}\|^2 \|Y\|\,ds \le K_A^2 \|Y\| \int_0^\infty e^{-2\lambda_0 s}\,ds = \frac{K_A^2}{2\lambda_0}\|Y\|$.
Hence the Sylvester operator inverse satisfies $\|\mathcal{L}_{A^T,A}^{-1}\| \le K_A^2/(2\lambda_0)$.
The right-hand side has norm $\le 2 \|D_kA\| \|P\| \le 2 \|D_kA\| c_2$
where $c_2 = K_A^2/(2\lambda_0)$. Therefore
\[
\|D_kP\| \le \frac{K_A^2}{2\lambda_0} \cdot 2 \|D_kA\| \cdot \frac{K_A^2}{2\lambda_0}
= \frac{K_A^4 \|D_kA\|}{2\lambda_0^2},
\]
giving $M_{DP} = K_A^4 \|DA\|_K / (2\lambda_0^2)$.
\end{proof}

\begin{lemma}\label{lem:D2P}
Under $f \in C^4(K)$, $P \in C^2(K)$ and
\begin{equation}
\|D^2P(y)\| \le \frac{K_A^4}{2\lambda_0^2}\left(\|D^2A\|_K
+ \frac{2K_A^2\|DA\|_K^2}{\lambda_0}\right) =: M_{D^2P}.
\end{equation}
\end{lemma}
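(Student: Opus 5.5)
Differentiate the once-differentiated Lyapunov identity a second time and reuse the Sylvester-inverse bound from Lemma~\ref{lem:DP}. Assumption~\ref{ass:smooth} gives $f\in C^4$ and $X^*\in C^3$, so $A(y)=D_xf(X^*(y),y)$ is $C^2$. Because $P=\mathcal{L}_{A^T,A}^{-1}(-I)$ and the Sylvester inverse depends smoothly (analytically) on $A$ wherever $A$ is Hurwitz, $P\in C^2(K)$. From Lemma~\ref{lem:DP} we already have, for each $k$,
\[
(D_kA)^TP + A^T(D_kP) + (D_kP)A + P(D_kA)=0 .
\]

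Differentiating this identity with respect to $y_l$ gives
\[
A^T(D_{kl}P)+(D_{kl}P)A = -Y_{kl},
\]
where
\[
Y_{kl}=(D_{kl}A)^TP+P(D_{kl}A)+(D_kA)^T(D_lP)+(D_lP)^T\!\!\!\!\quad{}+(D_lA)^T(D_kP)+(D_kP)(D_lA),
\]
that is, $Y_{kl}=(D_{kl}A)^TP+P(D_{kl}A)+(D_kA)^T(D_lP)+(D_lP)(D_kA)+(D_lA)^T(D_kP)+(D_kP)(D_lA)$. This is again a Sylvester equation with the same operator $\mathcal{L}_{A^T,A}$. The integral representation $X=\int_0^\infty e^{A^Ts}Ye^{As}\,ds$ and Assumption~\ref{ass:NH} give $\|\mathcal{L}_{A^T,A}^{-1}\|\le K_A^2/(2\lambda_0)$ exactly as in Lemma~\ref{lem:DP}.

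Next bound $\|Y_{kl}\|$ term by term:
\begin{itemize}
\item The first two terms give at most $2\|D^2A\|_K\,c_2=\|D^2A\|_K K_A^2/\lambda_0$.
\item The four cross terms give at most $4\|DA\|_K\,M_{DP}=4\|DA\|_K\cdot K_A^4\|DA\|_K/(2\lambda_0^2)=2K_A^4\|DA\|_K^2/\lambda_0^2$, using Lemma~\ref{lem:DP}.
\end{itemize}
Multiplying by $K_A^2/(2\lambda_0)$ yields
\[
\|D_{kl}P\|\le \frac{K_A^4}{2\lambda_0^2}\|D^2A\|_K+\frac{K_A^6\|DA\|_K^2}{\lambda_0^3}
=\frac{K_A^4}{2\lambda_0^2}\Bigl(\|D^2A\|_K+\frac{2K_A^2\|DA\|_K^2}{\lambda_0}\Bigr),
\]
which is the claimed $M_{D^2P}$. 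The last step passes from component bounds to the operator norm of $D^2P$ as a bilinear map into matrices. To do this, differentiate along directions $u,v$ rather than coordinates: the same computation gives $\|D^2P[u,v]\|\le M_{D^2P}|u||v|$ directly, provided $\|DA\|$ and $\|D^2A\|$ are also operator norms. This convention matches the norm conventions of the paper.

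The main obstacle is bookkeeping rather than analysis. One point is correctly counting the cross terms: there are four, and they must be matched with the factor $2$ inside the bracket. The other is ensuring the norms of $DA$, $D^2A$ are taken in the induced multilinear sense, so that no dimension-dependent factor enters when assembling $D^2P$ from directional derivatives. Regularity of $P$, needed to justify the second differentiation, follows from the smooth dependence of the Sylvester inverse on $A$ noted at the outset.
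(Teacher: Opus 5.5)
Your proposal is correct and follows the paper's proof: you differentiate the once-differentiated Lyapunov identity, bound the six-term right-hand side by $2c_2\|D^2A\|_K + 4\|DA\|_K M_{DP}$, and apply the Sylvester-inverse bound $K_A^2/(2\lambda_0)$. Two of your additions tidy points the paper passes over silently: working with directional derivatives $D^2P[u,v]$ handles the step from coordinate entries to the bilinear operator norm without a dimension factor, and the smooth dependence of the Sylvester inverse on $A$ justifies $P\in C^2$.
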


\begin{proof}
Differentiate twice. The second derivative satisfies
$\mathcal{L}_{A^T,A}(D^2_{jk}P) = -R_{jk}$ where the right-hand
side $R_{jk}$ contains four terms involving $(D_jA)(D_kP) + (D_kA)(D_jP)$
plus two terms $(D^2_{jk}A)^T P + P(D^2_{jk}A)$.
Bounding: $\|R_{jk}\| \le 4\|DA\| M_{DP} + 2c_2\|D^2A\|$.
Substituting $M_{DP} = K_A^4 \|DA\|_K / (2\lambda_0^2)$ and
$c_2 = K_A^2/(2\lambda_0)$:
\[
\|R_{jk}\| \le \frac{2K_A^4 \|DA\|^2}{\lambda_0^2} + \frac{K_A^2 \|D^2A\|}{\lambda_0}.
\]
Applying $\|\mathcal{L}^{-1}\| \le K_A^2/(2\lambda_0)$:
\[
\|D^2_{jk}P\| \le \frac{K_A^2}{2\lambda_0}\left(\frac{2K_A^4 \|DA\|^2}{\lambda_0^2} + \frac{K_A^2 \|D^2A\|}{\lambda_0}\right)
= \frac{K_A^6 \|DA\|^2}{\lambda_0^3} + \frac{K_A^4 \|D^2A\|}{2\lambda_0^2}.
\]
Factoring out $K_A^4/(2\lambda_0^2)$ gives the stated result.
\end{proof}

\bibliographystyle{abbrvnat}
\bibliography{references}

\end{document}